\documentclass[reqno]{amsart}
\usepackage{amsmath}
\usepackage{amsthm}
\usepackage{amssymb}
\usepackage{algorithmic} 
\usepackage{algorithm} 
\usepackage{graphics}
\usepackage{graphicx}
\usepackage{tikz}
\usepackage{hyperref}
\usepackage{caption}
\usepackage{mathabx}
\usepackage{cleveref}

\newtheorem{proposition}{Proposition}[section]
\newtheorem{theorem}[proposition]{Theorem}
\newtheorem{corollary}[proposition]{Corollary}
\newtheorem{lemma}[proposition]{Lemma}
\newtheorem{definition}[proposition]{Definition}

\newtheorem{example}[proposition]{Example}
\newtheorem{remark}[proposition]{Remark}
\numberwithin{equation}{section}

\newcommand{\B}{\mathbb{B}}

\newcommand{\R}{\mathbb{R}}

\newcommand{\pol}{\mathcal{P}}
\newcommand{\tpb}{\theta_{\mathcal{P},B}}
\newcommand{\np}{N_{\mathcal{P}}}

\newcommand{\x}{\bar{x}}
\newcommand{\st}{\text{ such that }}
\newcommand{\argmin}{\operatorname{argmin}}
\newcommand{\gph}{\operatorname{gph}}
\newcommand{\set}[2]{\left\{#1\,\left\vert\; #2\right.\right\}}
\newcommand{\ip}[2]{\left\langle #1,\, #2\right\rangle}

\newcommand{\inter}{\mathrm{int}\,}
\newcommand{\para}{\mathrm{par}\,}
\newcommand{\lin}{\mathrm{span}\,}
\newcommand{\cone}{\mathrm{cone}\,}
\newcommand{\Limsup}{\operatorname{Lim \, sup}}
\newcommand{\rge}{\operatorname{rge}}
\newcommand{\co}{\mathrm{conv}\,}

\newcommand{\epi}{\mathrm{epi}\;}
\newcommand{\dom}{\mathrm{dom}\;}

\begin{document}

\title[Stability results for regularized least-squares problems]{Stability results for regularized least-squares problems via  generalized Hessian expressions and monotone generalized equations}

\author{Leo Smulansky}
\address{
Department of Mathematics and Statistics\\
McGill University\\
Burnside Hall\\
805 Sherbrooke Street West\\
Montreal, Quebec H3A 0B9\\
Canada
}
\email{leo.smulansky@mail.mcgill.ca}

\address{
Department of Mathematics and Statistics\\
McGill University\\
Burnside Hall, Room 1114\\
805 Sherbrooke Street West\\
Montreal, Quebec H3A 0B9\\
Canada
}

\author{Tim Hoheisel}
\email{tim.hoheisel@mcgill.ca}

\author{Tran T.A. Nghia}
\address{
Department of Mathematics and Statistics\\
Oakland University\\
444 Mathematics and Science Center\\
Rochester, MI 48309\\
USA
}
\email{nttran@oakland.edu}

\begin{abstract}
We study perturbation and stability properties of solution mappings associated with convex regularized least-squares problems. 
We first establish an implicit function theorem for generalized equations governed by maximally monotone operators and smooth perturbations, yielding conditions for local Lipschitz continuity, directional differentiability, and semismoothness of solution mappings. We then characterize the kernel of generalized Hessians of convex functions through the subspace parallel to the subdifferential under \(\mathcal{C}^2\)-cone reducibility assumptions  on the conjugate of the regularizer, thereby replacing difficult second-order objects by tractable first-order conditions. As applications, we derive stability results for broad classes of regularized least-squares problems, including weighted polyhedral support-function regularizers and piecewise linear-quadratic penalties. Our framework unifies and extends several recent results on Lipschitz stability for LASSO-type models and monotone generalized equations.
\end{abstract}

\keywords{
least-squares problems,
regularized problems,
$\mathcal C^2$-cone reducible,
Lipschitz stability,
semismooth* mappings,
monotone operators,
generalized Hessian,
PLQ penalties,
variational analysis,  nonsmooth optimization, convex optimization}

\maketitle
\section{Introduction}

{\em Sensitivity} or  {\em stability analysis} constitutes a central theme in optimization theory. Broadly speaking, one seeks to understand how solutions of optimization problems and generalized equations vary under perturbations of the underlying data. Classical developments in this direction were driven largely by smooth nonlinear programming and variational inequalities, beginning with the foundational works of Robinson \cite{Rob 80, Rob 84, Rob 87} and   Kojima \cite{Koj 80}. We refer the reader to the standard reference by Bonnans and Shapiro \cite{BS 00} for  a very comprehensive account of the state-of-the art  (even in infinite dimensions) up to the early 2000s. Over the last two decades, however, the rapid emergence of modern applications in machine learning, statistics, imaging, compressed sensing, and signal processing has shifted attention toward nonsmooth and structured optimization models involving, e.g.,  sparsity-promoting or low-complexity regularizers. Such problems often exhibit rich variational structure while simultaneously lacking smoothness or even polyhedrality.
Among the most prominent examples are sparse (or low-rank) recovery problems such as the LASSO \cite{SaS 86, T 96}, group LASSO \cite{MiL 06}, square-root LASSO \cite{BCW 11, BCW 14}, and nuclear norm minimization \cite{CaR 09, Faz 02, RFP 10}. The vast majority of these problems are of the form 

\begin{align}\label{eq:generalLeastSquares}
    \min_{x \in \R^n}  \frac{1}{2}\|Ax-b\|^2 + \lambda r(x)
\end{align} where  $\frac{1}{2}\|\cdot\|^2$ denotes the Euclidean norm squared,  $A \in \R^{m \times n}$ is a linear (forward) operator, $b \in \R^m$ is a measurement vector,  $\lambda > 0$ is a regularization parameter,
and $r : \R^n \to \overline{\R}$ is a closed, proper,  convex regularizer that promotes structure on the solution.

\begin{remark} Without  much further effort, we could consider the (seemingly) more general problem where $\frac{1}{2}\|\cdot\|^2$ is  replaced by any convex and (locally) $\mathcal{C}^2$-smooth fidelity term. This, however, just increases notational burden, and the focus of this paper is on different classes of regularizers. 
\end{remark}



\subsection{Contributions} 
The overarching goal of this paper is to contribute to the study of the variational-analytic properties of the solution map of \eqref{eq:generalLeastSquares}, i.e., the map 
\begin{align}\label{eq:S}
  S(A,b,\lambda):= \underset{x \in \R^n}{\argmin}\;   \frac{1}{2}\|Ax-b\|^2 + \lambda r(x).
\end{align}
Like the  majority of results on solution maps to parametric optimization problems, in particular the ones pointed out in \Cref{sec:RelWork} below, we approach the study of $S$ via the fact that $S$ can be equivalently written   using the optimality conditions for \eqref{eq:generalLeastSquares}, i.e., 
\begin{align}\label{eq:S2}
S(A,b,\lambda)= \set{x\in \R^n}{0\in \frac{1}{\lambda}A^T(Ax-b)+\partial r(x)}.
\end{align}
Our study is now divided into three steps: 
\medskip

\noindent
I) We first provide a general implicit function theorem (\Cref{prop:Implicit}) for solution maps 
\[
S(p)=\set{x}{ 0\in f(p,x)+F(x)}
\]
 defined by a generalized equation governed by  maximally monotone and  set-valued part $F$ and a smooth part $f$ that is monotone in $x$. This, obviously, covers the generalized equation for $S$ in \eqref{eq:S2}. 
 This implicit function theorem provides sufficient conditions for local Lipschtiz continuity and directional differentiability of  $S$ as well {\em semismoothness*}, a more recent property coined by Gfrerer and Outrata \cite{GfO 21}, intimately connected to the more traditional notion of semismoothness popularized by Qi and Sun in their seminal paper \cite{QS 93}. The obligatory regularity condition for an implicit function theorem in our case is  simply the celebrated {\em Mordukhovich criterion}  \cite{Mor 84, Mor 92, RoW 98} for metric regularity   of the set-valued map $x\mapsto f(\bar p,x)+F(x)$ around the parameter $\bar p$ in question. This  criterion here  reads 
\begin{equation}\label{eq:KernelCQ1Intro}
\ker D_xf(\bar p,\bar x) \;\cap\; \ker D^*F(\bar x|-f(\bar p,\bar x))=\{0\},
\end{equation}
where $D^*$ is the coderivative operator (see \Cref{sec:Prelim} for details) applied to the set-valued map $F$.

\medskip

\noindent
II) When applied to \eqref{eq:S2}, the Mordukhovich criterion \eqref{eq:KernelCQ1Intro} simplifies to 
\begin{equation}\label{eq:MordDataIntro}
    \ker A \cap \ker \partial^2 r(\bar{x} | -f(\bar{p}, \bar{x})) = \{0\}, 
\end{equation}
where $ \partial^2 r:=D^*(\partial r)$ is the {\em generalized Hessian} operator (see \Cref{sec:Prelim} for details) of the convex regularizer $r$. Since this object involves a coderivative operator (applied to a subdifferential operator) it is, in general, arduous to compute. Therefore, as a second main contribution, we show in \Cref{thm:c2conepargenhess} that
for (closed, proper, convex)  functions  whose (Fenchel) conjugate  is {\em $\mathcal{C}^2$-cone reducible} the kernel of the generalized Hessian can be expressed via the {\em subspace parallel to the subdifferential}  - an object which is much more expedient to compute.
\medskip

\noindent
III) As a third main contribution, we prove stability results for the solution map \eqref{eq:S} for various choices of $r$: 
\begin{itemize}
\item[i)] We establish semismoothness (in particular local Lipschitz continuity and directional differentiability) for the case when $r^*$ is $\mathcal{C}^2$-cone reducible (\Cref{prop:stabilityc2}).
\item[ii)] We prove explicit Lipschitz bounds and expressions for the directional derivative for the case where $r$ is the composition of a polyedral support function and a linear map (\Cref{cor:polyhedraldd}).

\item[iii)] We discuss the case where $r$ is a {\em piecewise linear-quadratic (PLQ)} without appealing to $\mathcal{C}^2$-cone reducibility (\Cref{thm:plqstab}),   and prove quantitative results in the special  case where $r$ is a PLQ penalty in the sense of \cite[Example 11.18]{RoW 98} (\Cref{cor:Quant}).

\end{itemize}

\subsection{Recent related work}\label{sec:RelWork} The related work recently published decomposes into  roughly three camps based on what variational-analytical tools are being employed.  One line of research, to which this paper belongs,  employs and refines  tools from variational analysis \`a la Mordukhovich \cite{Mor 06, Mor 18},  Rockafellar and Wets \cite{RoW 98} and Dontchev and Rockafellar \cite{DoR 14} based on graphical differentiation.  Representative contributions include the work of Berk et al. \cite{BBH 22, BBH 24} for LASSO and Square-root LASSO,  where results on  Lipschitzness and directional differentiability   are obtained through second-order variational analysis in a particular instance of this paper here. Other work on LASSO-type problems exploits the explicit polyhedral structure of $\ell_1$-type regularization problems \cite{HYZ 24, MWY 25}.  More recently, Cui et. al \cite{CHNS 26} established a new perspective based on Robinson's strong regularity of suitably constructed dual problems and the geometry of $\mathcal{C}^2$-cone reducible conjugates. Their framework yielded elegant first-order characterizations for Lipschitz stability, tilt stability, and full stability of broad classes of convex regularized least-squares problems while bypassing explicit second-order calculations on the nonsmooth regularizer itself. A central feature of this approach is that the relevant generalized second-order conditions arise automatically from the structure of the dual problem and the geometry of the epigraph of the conjugate regularizer. Our work here adds to that by also providing quantitative results, and by using an implicit function theorem for the the primal problem rather than looking at strong regularity of the dual. 

Another, rather different approach was developed by Bolte et al. \cite{BLPS 21, BPS 24}  who studied differentiability properties of solution mappings to  monotone inclusion problems (thus including optimality condtions of convex problems) through the lens of  {\em path differentiability} and {\em conservative Jacobian calculus}.  

Finally, stability analysis for linear least-squares problems  in the form of \eqref{eq:generalLeastSquares} based on {\em partial smoothness} can be found in the body of work by Vaiter et al., e.g., \cite{VDF 15, VDF 17}.

\subsection{Organization} The paper is organized as follows: In \Cref{sec:Prelim} we provide the necessary background material from convex and variational analysis as well as some elementary linear-algebraic facts useful to our study. 
\Cref{sec:Implicit} contains the advertized implicit function theorem (\Cref{prop:Implicit}) along with its  technical prerequisites, including the notion of semismoothness*. In turn, \Cref{sec:Kernel} is devoted to the study of the kernel of the generalized Hessian of a convex function $g$, culminating in its characterization via the subspace parallel to the subdifferential in the case where its conjugate $g^*$ is $\mathcal{C}^2$-cone reducible (\Cref{thm:c2conepargenhess}). Finally,  the stability results for the solution map \eqref{eq:S} for different instances of the regularizer $r$ can be found in \Cref{sec:Stability}. We close with final remarks in \Cref{sec:Final}.

\medskip

\noindent
{\em Notation:} We use $\overline{\R}:=\R\cup\{\pm\infty\}$ to denote the extended real numbers. $\R_{++}$ denotes the set of positive real numbers, and similarly, $\R_{+}$ denotes the set of nonnegative real numbers and $\R_{-}$ denotes the set of nonpositive real numbers.  $\mathbb{B}_{\varepsilon}(\x)$ denotes the closed ball of radius $\varepsilon$ centered at $\x$, and we set  $\mathbb{B}:=\mathbb B_1(0)$. The distance from a point $x \in \R^d$ to a set $C \subseteq \R^d$ is denoted $d(x, C)$. All these are measured in the Euclidean norm $\|\cdot\|$ on $\R^d$.  For a function $f : \R^d \to \R$, at a point $\x$ at which $f$ is differentiable, $\nabla f(\x) \in \R^d$ is the gradient of $f$. If $f : \R^d \to \R$ is twice differentiable at $\x$, then we denote the Hessian at that point by $\nabla^2 f(\x)$. If $f : \R^n \to \R^m$ is differentiable at $\x$, then we denote its Jacobian at $\x$ by $Df(\x)$. For a subset $S$ of a real vector space, we denote the conical and linear hulls of $S$ by $\cone S$ and $\lin S$, respectively. The subspace parallel to a convex set $C$ is defined as $\lin (C - x)$, for any $x \in C$, and is denoted $\para C$. The (topological) closure of $C$ is denoted by ${\rm cl\;}C$.  The kernel and range of a matrix $A$ are denoted by $\ker A$ and $\rge A$, respectively. $\mathbb{S}^n_{+}$ denotes the set of $n \times n$ symmetric positive semidefinite matrices.  The smallest and largest singular values of $A$ are denoted $\sigma_{\min}(A)$ and $\sigma_{\max}(A)$, respectively.  For $A \in \R^{n \times n}$ symmetric, we use $\lambda_{\min}(A)$ and $\lambda_{\max}(A)$ to refer to the smallest and largest eigenvalues of $A$, respectively. 

Throughout we transition seamlessly between the notation  $\ip{x}{y}$ and $x^Ty$ for the  standard inner product between  $x,y\in \R^d$,  depending on what is more expedient in the respective situation.

\section{Preliminaries}\label{sec:Prelim}

\subsection{Tools from variational anlysis}
We present here some necessary concepts and results from variational analysis  which are needed for our study and which, in large parts,  come from the standard references \cite{DoR 14, RoW 98}. The initiated reader may skip this section and come back to it only when needed.

\subsubsection{Tools from convex analysis} 
For a  function $f : \R^d \to \overline{\R}$, its {\em epigraph} is the set $\epi f:=\set{(x,\alpha)\in \R^{d+1}}{f(x)\leq \alpha}$. Its {\em domain} is $\dom f:=\set{x\in \R^d}{f(x) < \infty}$. We call $f$ {\em proper} if $\dom f\neq \emptyset$ and $f>-\infty$. We say that $f$ is {\em closed} if $\epi f$ is closed, and {\em convex} if $\epi f$ is convex.

The \emph{(convex) subdifferential} of $f: \R^d \to \overline{\R}$ at $\bar{x}$ is \begin{align*}
    \partial f (\bar{x}) := \{v \in \R^d \ | \ f(\x)+\langle v, x - \x \rangle  \le f(x), \; \forall x \in \R^d \}.
\end{align*} 
The \emph{(Fenchel) conjugate} of   $f$ is the function $f^* : \R^d \to \overline{\R}$ defined by 
\[
f^*(y) := \sup_{x \in \R^d} \{\langle x, y \rangle - f(x)\}.
\]
As a special case of conjugacy, we note that for any (nonempty) set  $C\subseteq\R^d$, its \emph{indicator (function)} $\delta_C:\R^d\to\overline{\R}$ defined by
\[
\delta_C(x):=\begin{cases}0, & x\in C,\\ +\infty,& x\notin C
\end{cases}
\]
has the conjugate
\[
\sigma_C(y):=\delta_C^*(y)=\sup_{x\in C} \ip{y}{x},
\]
which we call the {\em support function of $C$}. In turn, if $C$ is closed and convex we have also that $\sigma_C^*=\delta_C$.
We call a proper function $f$ {\em polyhedral convex} (or {\em convex piecewise linear}) if $\epi f$ is polyhedral convex, i.e., the interection of finitely many half space. This is equivalent to $f$ having the form
\[
f(x)=\max_{i=1,\dots,r}\{a_i^Tx+\beta_i\}+\delta_\pol(x)
\]
for some (nonempty) polyhedron $\pol\subseteq\R^d$ and $(a_i,\beta_i)\in \R^d\times \R\;(i=1,\dots,r)$. In particular, compositions of polyhedral convex functions with affine maps are polyhedral convex.
We point out that \cite[ Theorem 11.14]{RoW 98} $f$ is polyhedral convex if and only if $f^*$ is.  In particular, $\sigma_C$ is polyhedral convex if and only if $C$ is polyhedral.

It is known \cite[Theorem 11.1]{RoW 98} that $f$ is closed, proper, convex if and only if $f=f^{**}(:=(f^*)^*)$ in which case we have the important relation \cite[Proposition 11.3]{RoW 98}
\begin{equation}\label{eq:SDInversion}
\partial f^* = (\partial f)^{-1},
\end{equation}
where this inversion has to be understood in the set-valued sense, to which we allude now.

\subsubsection{Tools from set-valued analysis} For a set-valued map $S : \R^{n} \rightrightarrows \R^{m}$,  we define $S^{-1}:\R^m\rightrightarrows \R^n$ via 
\[
S^{-1}(y):=\set{x\in \R^n}{y\in S(x)}.
\]
In particular, $(S^{-1})^{-1}=S$. 
The \emph{graph of $S$} is the set $\gph S := \{(x, y) \in \R^{n} \times \R^{m} \ | \ y \in S(x)\}$.   The \emph{kernel} of $S$ is $\ker S := S^{-1}(0)$.

 The \emph{outer limit} of a set-valued map 
$S : \R^{n} \rightrightarrows \R^{m}$ at $\x$ is defined as 
  \[
  \underset{x \to \x}{\Limsup} \ S(x) := \{y \in \R^{m} \ | \ \exists \{x^k\} \to \x, \ \{y^k \in S(x^k)\} \to y \}.
 \]
For $\Omega \subseteq \R^d$ and $\bar{x} \in \Omega$, we define the \emph{tangent cone} to $\Omega$ at $\x$ as 
\begin{align*}
    T_\Omega(\bar{x}) = \underset{t \downarrow 0}{\Limsup} \frac{\Omega - \bar{x}}{t}.
\end{align*} 
When $\Omega$ is convex, by \cite[Theorem 6.9]{RoW 98}, we can write $T_\Omega(\x)$ as
\begin{align*}
    T_\Omega(\bar x) = \mathrm{cl} \left \{ w \, \big| \, \exists \lambda > 0 \st \bar x + \lambda w \in \Omega \right\}.
\end{align*}
We define the \emph{regular normal cone} to $\Omega$ at $\x$ as
\begin{align*}
    \hat{N}_\Omega(\x) = \{v \in \R^d \ | \ \langle v, x - \x \rangle \le o(\lVert x - \x \rVert), \ \forall x \in \Omega\}
\end{align*} and the \emph{(limiting) normal cone} to $\Omega$ at $\x$ as
\[
N_\Omega(\bar x) = \underset{x \to \x}{\Limsup} \ \hat{N}_\Omega(x).
\]
At a point $(\bar{x}, \bar{y}) \in \gph S$, the \emph{graphical derivative} is defined as the  set-valued map $DS(\bar{x}|\bar{y}) : \R^{n} \rightrightarrows \R^{m}$
defined by 
\begin{align*}
    DS(\bar{x}|\bar{y})(w) := \{z \ | \ (w, z) \in T_{\gph S}(\bar{x}, \bar{y})\},
\end{align*} and the \emph{coderivative} is the set-valued map $D^*S(\bar{x}|\bar{y}) : \R^{m} \rightrightarrows \R^{n}$ defined by
\begin{align*}
D^*S(\bar{x}|\bar{y})(u) := \{v \ | \ (v, -u) \in N_{\gph S}(\bar{x}, \bar{y})\}.
\end{align*}
A simple observation that follows from the definition is the following.

\begin{lemma}\label{lem:CodAux} Let $F:\R^n\to \R^m$, and let  $G:\R^d\times \R^n\to \R^m$ be defined by $G(p,x)=F(x)$. Then, for $(\bar p,\bar x, \bar z)\in \gph G$, we have
\[
D^*G(\bar p, \bar x|\bar z)(y)= \{0\}\times D^*F(\bar x|\bar z)(y).
\]
\end{lemma}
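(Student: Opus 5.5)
The plan is to reduce everything to a product formula for normal cones. First I will identify the graph of $G$ with a product set. Since $G(p,x)=F(x)$, we have
\[
\gph G=\set{(p,x,z)}{z\in F(x)}=\R^d\times \gph F
\]
after the obvious reordering of coordinates $(p,x,z)\leftrightarrow (p,(x,z))$. This reordering is a linear isometry, so normal cones transform accordingly. Here $F$ may even be set-valued; the argument does not use single-valuedness.

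Next I will compute the normal cone of this product. The product rule for normal cones \cite[Proposition 6.41]{RoW 98} states that $N_{C_1\times C_2}(c_1,c_2)=N_{C_1}(c_1)\times N_{C_2}(c_2)$. It holds for both regular and limiting normals and requires no closedness or convexity assumptions. Applying it with $N_{\R^d}(\bar p)=\{0\}$ gives
\[
N_{\gph G}(\bar p,\bar x,\bar z)=\{0\}\times N_{\gph F}(\bar x,\bar z).
\]
If one prefers to avoid citing the rule, it can be verified directly from the definitions. For the regular normal cone, take $(w,v,u)\in\hat N_{\gph G}$ and test it against points $(p,\bar x,\bar z)$ with $p$ arbitrary. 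The free variable $p$ then forces $w=0$. Testing against points $(\bar p,x,z)$ with $(x,z)\in\gph F$ shows $(v,u)\in\hat N_{\gph F}$. The converse inclusion is immediate. The limiting cone follows by passing to outer limits, since $\{0\}\times(\cdot)$ commutes with $\Limsup$.

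Finally I will unwind the definition of the coderivative. By definition, $v'\in D^*G(\bar p,\bar x|\bar z)(y)$ if and only if $(v',-y)\in N_{\gph G}(\bar p,\bar x,\bar z)$. Writing $v'=(w,v)$, this holds if and only if $w=0$ and $(v,-y)\in N_{\gph F}(\bar x,\bar z)$. The latter condition is exactly $v\in D^*F(\bar x|\bar z)(y)$, which yields the claimed formula.

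The only point requiring care is the bookkeeping of the coordinate permutation and the direct verification of the regular-normal product. Neither is a genuine obstacle.
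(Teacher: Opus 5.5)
Your proof is correct and is essentially the direct-from-definitions argument the paper has in mind (the paper offers no proof beyond calling the lemma a simple observation from the definitions): $\gph G=\R^d\times\gph F$, the normal cone of this product is $\{0\}\times N_{\gph F}(\bar x,\bar z)$, and the coderivative formula follows by unwinding the definition. One small caveat is that Rockafellar--Wets state their product rule (Proposition 6.41) for closed factors, so when $\gph F$ is not closed (the lemma assumes nothing about $F$), it is your direct verification of the product formula, not the citation, that justifies the step.
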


\noindent
The following is a sum rule for the graphical derivative and coderivative, respectively, in the presence of continuous differentiability. 

\begin{lemma}[{\cite[Exercise 10.43]{RoW 98}}]\label{lem:Sum} Let $S=f+F$ for $f: \R^{n} \to \R^{m}$ and   $F: \R^{n} \rightrightarrows \R^{m}$. Let $(\bar x,\bar u)\in \gph S$ and assume that $f$ is continuously differentiable at $\bar x$. Then:
\begin{itemize}
    \item [(a)] $DS(\bar x|\bar u)(w)=Df(\bar x)w+DF(\bar x|\bar u-f(\bar x))(w),\quad \forall w\in \R^{n}$;
    \item[(b)]  $D^*S(\bar x|\bar u)(y)=Df(\bar x)^*y+D^*F(\bar x|\bar u-f(\bar x))(y),\quad \forall y\in \R^{m}$. 
\end{itemize}
\end{lemma}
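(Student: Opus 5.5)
The last statement before the cutoff is the sum rule, Lemma~\ref{lem:Sum}, which the paper cites from \cite[Exercise 10.43]{RoW 98}. I will nevertheless sketch a self-contained argument. It rests on one observation: adding a $\mathcal C^1$ map is a smooth change of coordinates on the graph. Define $\Phi:\R^n\times\R^m\to\R^n\times\R^m$ by $\Phi(x,v):=(x,v+f(x))$. Then $\gph S=\Phi(\gph F)$, and $\Phi$ is continuously differentiable near $\bar x$ with inverse $\Phi^{-1}(x,u)=(x,u-f(x))$, which is also $\mathcal C^1$ there. Hence $\Phi$ is a local $\mathcal C^1$ diffeomorphism mapping $(\bar x,\bar u-f(\bar x))$ to $(\bar x,\bar u)$, and its Jacobian there is
\[
\nabla\Phi(\bar x,\bar v)=\begin{pmatrix} I & 0\\ Df(\bar x) & I\end{pmatrix},
\]
where $\bar v:=\bar u-f(\bar x)$.

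For (a), I invoke the change-of-coordinates rule for tangent cones under local diffeomorphisms, \cite[Exercise 6.7]{RoW 98}: $T_{\Phi(C)}(\Phi(\bar z))=\nabla\Phi(\bar z)T_C(\bar z)$. One can also check this directly. If $t_k\downarrow0$ and $(w^k,z^k)\to(w,z)$ with $(\bar x+t_kw^k,\bar v+t_kz^k)\in\gph F$, then
\[
\bar u+t_k\bigl(z^k+(f(\bar x+t_kw^k)-f(\bar x))/t_k\bigr)\in S(\bar x+t_kw^k),
\]
and the difference quotient converges to $Df(\bar x)w$ by differentiability. The converse inclusion follows by applying the same argument to $\Phi^{-1}$. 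Consequently $(w,y)\in T_{\gph S}(\bar x,\bar u)$ if and only if $(w,y-Df(\bar x)w)\in T_{\gph F}(\bar x,\bar v)$, which is exactly (a).

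For (b), I use the corresponding rule for normal cones, \cite[Exercise 6.7]{RoW 98}: $N_{\Phi(C)}(\Phi(\bar z))=\nabla\Phi(\bar z)^{-T}N_C(\bar z)$. Here $\nabla\Phi^{-T}$ sends $(p,q)$ to $(p-Df(\bar x)^Tq,\,q)$. So $(v,-y)\in N_{\gph S}(\bar x,\bar u)$ if and only if $(v-Df(\bar x)^T y,\,-y)\in N_{\gph F}(\bar x,\bar v)$, with the sign bookkeeping taking some care. In coderivative language this reads $v\in Df(\bar x)^*y+D^*F(\bar x|\bar v)(y)$, which is (b).

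The one step that needs real care is the normal-cone transformation. Limiting normals are defined through regular normals at nearby points, so I first verify the formula for $\hat N$ at every point $z$ near $\bar z$: this follows from the first-order expansion of $\Phi$ together with the $o(\|\cdot\|)$ definition of $\hat N$. I then pass to the limit, using continuity of $\nabla\Phi$, and this continuity is precisely where continuous differentiability of $f$ (not mere differentiability at $\bar x$) enters.
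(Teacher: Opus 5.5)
Your proof is correct. The paper gives no argument of its own and simply cites \cite[Exercise 10.43]{RoW 98}; your approach is the standard way to verify that exercise. You write $\gph S$ as the image of $\gph F$ under the shear $\Phi(x,v)=(x,v+f(x))$, which is a local $\mathcal C^1$ diffeomorphism. You then transport tangent cones, and regular and then limiting normal cones, through the change-of-coordinates rule \cite[Exercise 6.7]{RoW 98}. The sign bookkeeping $(v,-y)\in N_{\gph S}(\bar x,\bar u)\iff (v-Df(\bar x)^Ty,\,-y)\in N_{\gph F}(\bar x,\bar u-f(\bar x))$ is right. You also correctly place the hypotheses: part (a) needs only differentiability of $f$ at $\bar x$. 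Part (b) needs differentiability near $\bar x$, plus continuity of $Df$ at $\bar x$ to pass from regular to limiting normals.
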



\noindent
For a convex function $g$, at a point $(\bar{x}, \bar{v}) \in \operatorname{gph} \partial g$, we refer to the coderivative of $\partial g$ at such a point as the \emph{generalized Hessian} and denote it as follows:
\[
   \textcolor{purple}{} \partial^2 g(\bar{x} | \bar{v})(\cdot) := D^* \partial g(\bar{x} | \bar{v})(\cdot).
\]
For $f : \R^d \to \overline{\R}$ and a point $\x \in \R^d$ at which $f$ is finite, let us define the quotient
 \begin{equation}
     \Delta^2_tf(\bar x|\bar v)(w):=\dfrac{f(x+tw)-f(x)-t\langle\bar v,w\rangle}{\frac{1}{2}t^2}\quad \mbox{for}\quad t>0. 
 \end{equation}
The {\em second subderivative} of $f$ at $\bar x$ for $\bar v$ is defined by 
\[
d^2 f(\bar x|\,\bar v)(w)=\liminf\limits_{ \begin{subarray}\quad \,\ \tau\downarrow 0\\
w'\rightarrow w\end{subarray}}\Delta^2_\tau f(\bar x|\bar v)(w').
\]
The function $f$  is said to be {\it twice epi-differentiable} \cite[Definition 13.6]{RoW 98} at $\bar x\in \R^d$ for $\bar v$ if the functions  $\Delta^2_tf(\bar x|\bar v)$ {\em epi-converge} to $d^2f(\bar x|\, \bar v)$ as $t\downarrow 0$ in the sense that for every $w\in \R^d$ and any choice of  sequence $t_k\downarrow 0$ there exist $w_k\to w$ such that
$$\lim_{k\to \infty}\frac{f(\bar x+t_k w_k)-f(\bar x)-t_k \langle \bar v, w_k\rangle}{ \frac{1}{2}t_k^2}= d^2f(\bar x|\, \bar v)(w).$$
The function $f$  is twice epi-differentiable at $\bar x$, if it is twice epi-differentiable at $\bar x$ for any $v\in \partial f(\bar x)$; see \cite{MS 20} for recent developments for twice epi-differentiable functions.  

The map $F : \R^n \rightrightarrows \R^m$ is said to be \emph{proto-differentiable} at $\bar x \in \R^n$ if there exists $\bar u \in F(\x)$ such that \begin{align*}
    \frac{\gph F - (\bar x, \bar u)}{\tau} \overset{\tau \downarrow 0}{\to} \gph DF(\bar x | \bar u),
\end{align*} 
where the convergence of sets is in the sense of \cite[Chapter 4B]{RoW 98}.

A set-valued map $Q : \R^{n} \rightrightarrows \R^{m}$ is called \emph{metrically regular} at a point $(\bar{x}, \bar{y}) \in \gph Q$ if
there exists a neighborhood $V$ of $\bar{x}$, a neighborhood $W$ of $\bar{y}$, and $\kappa \ge 0$
such that
\begin{align*}
    d(x, Q^{-1}(y)) \le \kappa d(y, Q(x)), \ \forall x \in V, y \in W.
\end{align*}
\noindent
For $S : \R^n \rightrightarrows \R^m$  and $(\x, \bar y) \in \gph S$, then we say $S$ has a \emph{localization} at $\x$ for $\bar y$ if there are neighborhoods $V$ of $\x$ and $W$ of $\bar y$ such that $x \in V \mapsto S(x) \cap W$ is single-valued. We say $S$ has a \emph{Lipschitz continuous localization} at $\bar x$ for $\bar y$ if this localization is Lipschitz continuous. 

$Q : \R^n \rightrightarrows \R^m$ is called \emph{strongly metrically regular} at $(\bar{x}, \bar{y}) \in \gph Q$ if $Q^{-1}$ has a Lipschitz continuous localization around $\bar{y}$ for $\bar{x}$.

\begin{lemma}[Single-valuedness of localization from convex-valuedness]\label{lem:localizationAndConvexity}
    Let $S : \R^n \rightrightarrows \R^m$ be convex-valued \footnote{i.e. $S(x)$ is a convex set for all $x \in \R^n$.}, let $(\bar x, \bar y) \in \gph S$, and suppose $S$ has a Lipschitz continuous localization at $\bar x$ for $\bar y$.
    Then, $S$ is locally (single-valued and) Lipschitz continuous at $\bar x$.
\end{lemma}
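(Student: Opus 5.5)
Let $V$ and $W$ be neighborhoods of $\bar x$ and $\bar y$ such that $s(x):=S(x)\cap W$ is single-valued and Lipschitz continuous on $V$, with constant $L$. The plan is to show that, after shrinking $V$, we have $S(x)=\{s(x)\}$ for every $x\in V$; then $S$ itself is single-valued and Lipschitz near $\bar x$. Convexity is what rules out points of $S(x)$ lying outside $W$. Without loss of generality we take $W$ to be an open ball $\inter\mathbb{B}_{\varepsilon}(\bar y)$ (shrinking $W$ keeps the localization single-valued). We also shrink $V$ so that $\|s(x)-\bar y\|<\varepsilon/2$ on $V$, which is possible since $s$ is continuous with $s(\bar x)=\bar y$.

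Now fix $x\in V$ and suppose some $y\in S(x)$ has $y\neq s(x)$. Since $S(x)$ is convex, the whole segment $[s(x),y]$ lies in $S(x)$. Because $s(x)$ is an interior point of the ball $W$, the points $s(x)+t(y-s(x))$ for small $t>0$ lie in $W$ and are distinct from $s(x)$. So $S(x)\cap W$ contains at least two points, which contradicts single-valuedness of the localization. Hence $S(x)=\{s(x)\}$ for all $x\in V$, and $S$ coincides with the Lipschitz map $s$ on $V$.

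The only delicate point is making sure $s(x)$ sits in the interior of $W$ and not on its boundary, since otherwise the segment could leave $W$ immediately. Choosing $W$ open, together with the shrinking of $V$ above, handles this. One should also note that the definition of localization gives $S(x)\cap W\neq\emptyset$ for $x\in V$, so $s(x)$ is well defined; this is implicit in the statement's wording "single-valued".
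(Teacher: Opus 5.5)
Your proof is correct and follows the paper's argument. Convexity puts the segment from $s(x)$ to any $y\in S(x)$ inside $S(x)$, and the initial piece of that segment then lies in $S(x)\cap W=\{s(x)\}$, which forces $y=s(x)$. Your reduction to an open ball $W$, with $V$ shrunk so that $s(x)$ stays in it, makes explicit something the paper uses tacitly in the phrase ``for $\lambda$ sufficiently small'': that $s(x)$ lies in the interior of $W$.
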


\begin{proof}
    Let $V$ be the associated neighborhood of $\x$, and $W$ be the associated neighborhood of $\bar y$ such that $x \in V \mapsto S(x) \cap W$ is single-valued. Without loss of generality (by shrinking $V$ is necessary), we may assume $S$ is convex-valued on $V$. Fix $x \in V$ and let $y \in S(x)$. Let $s : V \to W$ be a localization of $S$ which is Lipschitz continuous.  By convexity of $S(x)$, for all $\lambda \in (0, 1)$, we have $s(x) + \lambda(y - s(x)) \in S(x)$. For $\lambda$ sufficiently small, $s(x) + \lambda(y - s(x)) \in S(x) \cap W = \{s(x)\}$, so $s(x) + \lambda(y - s(x)) = s(x) \implies s(x) = y$. Therefore, $S(x) = \{s(x)\}$ for $x \in V$, which establishes that $S$ is in fact locally single-valued and Lipschitz continuous at $\bar x$.
\end{proof}

A set-valued map $Q : \R^d \rightrightarrows \R^d$ is called \emph{monotone} if \begin{align*}
    \langle v_1 - v_0, x_1 - x_0 \rangle \ge 0, \ \forall v_0 \in Q(x_0), \ v_1 \in Q(x_1),
\end{align*} and is called \emph{maximally monotone} if no enlargement of its graph is possible in $\R^d \times \R^d$ without destroying monotonicity.

 The following result from \cite[Theorem 3G.5]{DoR 14} will be one of the central tools we use to develop the implicit function theorem.

\begin{proposition}\label{thm:monotonicityAndMetricRegularity}
    If $Q : \R^d \rightrightarrows \R^d$ is monotone and metrically regular at $(\x, \bar y) \in \gph S$, then in fact $Q$ is strongly metrically regular at $(\x, \bar y).$
\end{proposition}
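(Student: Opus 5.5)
Our plan is to prove the statement directly from the definitions, without any coderivative machinery. Metric regularity of $Q$ at $(\bar x,\bar y)$ already gives the Lipschitz estimate; what remains is to extract a \emph{single-valued} localization of $Q^{-1}$ around $\bar y$ for $\bar x$ (this is what strong metric regularity asks). Monotonicity will supply the single-valuedness.

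\emph{Step 1 (Aubin property of $Q^{-1}$).} Metric regularity of $Q$ at $(\bar x,\bar y)$ is equivalent to the Aubin (pseudo-Lipschitz) property of $Q^{-1}$ at $\bar y$ for $\bar x$. Concretely, with neighborhoods $V,W$ and modulus $\kappa$, take $y,y'\in W$ and $x\in Q^{-1}(y)\cap V$. Then
\[
d(x,Q^{-1}(y'))\le\kappa\, d(y',Q(x))\le \kappa\|y-y'\|.
\]
So Lipschitz continuity is already available, provided $Q^{-1}(\cdot)\cap V$ is single-valued near $\bar y$. It is also nonempty for $y$ near $\bar y$: apply the estimate with $x=\bar x$, and shrink $W$ so that the points found lie in $V$.

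\emph{Step 2 (single-valuedness, the main obstacle).} Suppose the conclusion fails. Then there are $y_k\to\bar y$ and $x_k\neq x_k'$ in $Q^{-1}(y_k)$, both converging to $\bar x$. Monotonicity alone does not forbid two preimages (for example, a constant operator on a segment). We therefore play it against metric regularity. Set $h_k:=(x_k'-x_k)/\|x_k'-x_k\|$, and perturb the right-hand side to $y_k+t h_k$ for small $t>0$. By Step 1 there is $z\in Q^{-1}(y_k+th_k)$ with $\|z-x_k\|\le\kappa t$, and likewise a $z'$ near $x_k'$ with $\|z'-x_k'\|\le\kappa t$. Monotonicity between the pairs $(z,y_k+th_k)$ and $(x_k,y_k)$ gives $\langle th_k,z-x_k\rangle\ge0$, and between $(z,y_k+th_k)$ and $(x_k',y_k)$ gives $\langle h_k,z-x_k'\rangle\ge0$. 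Perturbing instead by $-th_k$ gives the mirror inequality near $x_k'$.

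The aim is to choose $t$ comparable to $\|x_k'-x_k\|/\kappa$. At that scale the localized point $z$ is forced both to stay close to $x_k$ and to move in the direction $h_k$ past $x_k'$, which is a contradiction. If this bookkeeping becomes delicate, the fallback is the standard route: show that the localized inverse is monotone and has the Aubin property, hence (via Minty's theorem or the Kenderov/Zarantonello-type result) a monotone set-valued map with the Aubin property at a point is single-valued there. This is the argument behind \cite[Theorem 3G.5]{DoR 14}, and we would cite it at this step if needed.

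\emph{Step 3 (conclude).} Once single-valuedness holds on a neighborhood of $\bar y$, the Aubin inequality of Step 1 becomes $\|s(y)-s(y')\|\le\kappa\|y-y'\|$ for the localization $s$. This is precisely strong metric regularity of $Q$ at $(\bar x,\bar y)$.
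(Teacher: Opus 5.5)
Your proposal is correct. It differs from the paper mainly in that the paper gives no argument at all: it simply quotes \cite[Theorem~3G.5]{DoR 14}.

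Your Step~2 is already complete, and the bookkeeping is not delicate. The single inequality $\langle h_k,z-x_k'\rangle\ge 0$ gives
\[
\langle h_k,z-x_k\rangle\ge\langle h_k,x_k'-x_k\rangle=\|x_k'-x_k\|.
\]
Step~1 provides $z\in Q^{-1}(y_k+th_k)$ with $\|z-x_k\|\le(\kappa+1)t$. Use $(\kappa+1)t$ rather than $\kappa t$, since $d$ is an infimum, and take $t$ small enough that $y_k+th_k\in W$. Hence $\|x_k'-x_k\|\le(\kappa+1)t$ for all small $t>0$, which is absurd.

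Several pieces are therefore superfluous: the first monotonicity inequality, the point $z'$, and the mirror perturbation $-th_k$. The fallback is also unnecessary, and as phrased it would not work anyway. Kenderov- or Zarantonello-type theorems give single-valuedness only on a dense or full-measure set, never at a prescribed point.

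Notice also that your argument never uses $x_k'\to\bar x$. For $y$ near $\bar y$ and $x\in Q^{-1}(y)\cap V$, every $x'\in Q^{-1}(y)$ must equal $x$. So $Q^{-1}$ itself is single-valued near $\bar y$, not merely some localization of it, and no convexity of its values is needed.

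This makes Step~3 immediate: $\|s(y)-s(y')\|=d(s(y),Q^{-1}(y'))\le\kappa\|y-y'\|$. You avoid the usual shrinking of neighborhoods that ensures a near-closest point of $Q^{-1}(y')$ lies in the localizing neighborhood. Nonemptiness and $s(y)\in V$ come from your Step~1 estimate at $\bar x$.

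Compared with the citation, your route is self-contained. It uses only monotonicity plus the distance inequality defining metric regularity, with no closed graph, maximality or coderivatives. It also yields this slightly stronger global single-valuedness. The citation buys brevity.
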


Multiple times in this paper, we will employ (explicitly or implicitly) the following result, which is now known as the {\em Mordukhovich criterion}, see, e.g.,  \cite[Theorem 9.40]{RoW 98}.
\begin{theorem}[Mordukhovich criterion]\label{thm:coderivativeCriterion}
    Let $Q : \R^n \rightrightarrows \R^m$ with $(\x, \bar y) \in \gph Q$ and suppose $\gph Q$ is  closed. Then, $Q$ is metrically regular at $(\x, \bar y)$ if and only if \begin{align*}
        \ker D^*Q(\bar{x} | \bar{y}) = \{0\},
    \end{align*} or, equivalently, if \begin{align*}
        0 \in D^*Q(\x | \bar y)(w) \implies w = 0.
    \end{align*}
\end{theorem}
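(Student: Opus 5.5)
The statement is a cited result, \cite[Theorem 9.40]{RoW 98}, so one option is simply to invoke it. If I were to prove it from scratch, I would pass through the graph $\gph Q$ and use the equivalence between metric regularity of $Q$ at $(\x,\bar y)$ and the Aubin (Lipschitz-like) property of $Q^{-1}$ at $(\bar y,\x)$. The coderivative of the inverse is related to that of $Q$ by the swap $(v,-u)\in N_{\gph Q}(\x,\bar y)$ if and only if $(-u,v)$ lies in the correspondingly rotated normal cone to $\gph Q^{-1}$. Therefore $\ker D^*Q(\x|\bar y)=\{0\}$ is exactly the condition $D^*Q^{-1}(\bar y|\x)(0)=\{0\}$. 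So the task reduces to the coderivative criterion for the Aubin property of a map with closed graph, namely $D^*M(\bar y|\x)(0)=\{0\}$.

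\emph{Necessity.} Suppose $M=Q^{-1}$ has the Aubin property with constant $\kappa$. I would first show that every regular normal $(v,-u)\in\hat N_{\gph M}(y,x)$ at nearby graph points satisfies $\|v\|\le\kappa\|u\|$. To see this, test the regular normal inequality against graph points $(y',x')$, where $x'\in M(y')$ is chosen with $\|x'-x\|\le\kappa\|y'-y\|$, and let $y'=y+tw$ for suitable directions $w$. Passing to limiting normals preserves the bound, so $u=0$ forces $v=0$.

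\emph{Sufficiency.} This is the harder direction and the main obstacle. I would argue by contradiction. If metric regularity fails, there are points $x_k\to\x$ and $y_k\to\bar y$ with $d(x_k,Q^{-1}(y_k))>k\,d(y_k,Q(x_k))$. I would then apply Ekeland's variational principle to the function $(x,y)\mapsto\|y-y_k\|+\delta_{\gph Q}(x,y)$, perturbed by $k^{-1}$ times the distance in $x$, which produces nearby graph points $(x_k',y_k')$ with $y_k'\neq y_k$. The fuzzy (regular) normal optimality condition, or equivalently a proximal normal argument using closedness of $\gph Q$, yields regular normals $(v_k,-u_k)$ with $\|u_k\|=1$ and $\|v_k\|\le 1/k$. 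Extracting a convergent subsequence $u_k\to u$ with $\|u\|=1$ gives $(0,-u)\in N_{\gph Q}(\x,\bar y)$, which contradicts the kernel condition.

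The delicate points are keeping the Ekeland points within the neighborhood, ensuring $y_k'\neq y_k$ so that the norm term is differentiable with a unit-norm gradient, and justifying the sum rule for regular subgradients. That last step requires a fuzzy sum rule, which holds in finite dimensions.

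Finally, the equivalence of the two displayed formulations is immediate from the definition $\ker D^*Q=\set{w}{0\in D^*Q(\x|\bar y)(w)}$ and the fact that $0\in D^*Q(\x|\bar y)(0)$ always holds, since $N_{\gph Q}$ is a cone.
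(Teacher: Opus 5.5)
Your first option, invoking the textbook criterion, is exactly what the paper does: it gives no proof, only the citation to Rockafellar--Wets. Strictly speaking, the result numbered 9.40 there is the Aubin-property version $D^*M(\bar y|\x)(0)=\{0\}$ for $M=Q^{-1}$. The metric-regularity form stated here is obtained from it through the inverse-map swap you describe, so your reduction is the right bridge for that citation.

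Your from-scratch sketch takes a genuinely different route and is correct in outline. As far as I recall the textbook proof, it converts the Aubin property into Lipschitz continuity of the distance function $(y,x)\mapsto d(x,M(y))$ and applies the subgradient criterion for Lipschitz continuity of functions. That route also yields the exact Lipschitz modulus as the outer norm of $D^*M(\bar y|\x)$. Your route uses a direct bound $\|v\|\le\kappa\|u\|$ on regular normals for necessity, and Ekeland plus a sum rule for sufficiency. It is more self-contained and extends to infinite dimensions, but it is only qualitative unless you track constants. The necessity step and the inverse-coderivative computation are fine as written.

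The sufficiency step needs one fix. Negating metric regularity of $Q$ directly gives no control on $\epsilon_k:=d(y_k,Q(x_k))$. But with perturbation coefficient $1/k$, the Ekeland radius is $\lambda_k=k\epsilon_k$, and it must tend to $0$ to keep the points local. This is the delicate point you flagged. Negate the Aubin property of $M$ instead, on balls of radius $1/k^2$. You get $y_k,\tilde y_k\in\mathbb{B}_{1/k^2}(\bar y)$ and $x_k\in M(y_k)\cap\mathbb{B}_{1/k^2}(\x)$ with $d(x_k,M(\tilde y_k))>k\|y_k-\tilde y_k\|$. Hence $0<\epsilon_k:=d(\tilde y_k,Q(x_k))\le 2/k^2$ and $\lambda_k\le 2/k$.

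Now apply Ekeland to $x\mapsto d(\tilde y_k,Q(x))$, which is lsc because $\gph Q$ is closed, and take $y_k'$ nearest to $\tilde y_k$ in $Q(x_k')$. This produces exactly your perturbation $k^{-1}\|x-x_k'\|$. That term is only a pseudometric on the product space, so Ekeland cannot be applied there verbatim. Moreover $y_k'\neq\tilde y_k$, since $\|x_k'-x_k\|\le\lambda_k<d(x_k,M(\tilde y_k))$. Also $(x_k',y_k')\to(\x,\bar y)$, since $\|y_k'-\tilde y_k\|\le\epsilon_k$. The exact limiting sum rule with a locally Lipschitz summand then gives $(v_k,-u_k)\in N_{\gph Q}(x_k',y_k')$ with $\|u_k\|=1$ and $\|v_k\|\le 1/k$, so no fuzzy calculus is needed.
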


\subsection{Linear algebra results}

We present here some auxiliary results from linear algebra which are useful to our study. 

The first two  lemma are  more generally relevant and certainly known, but we provide  proofs for completeness.
\begin{lemma}\label{lem:Sub} Let $M\in \R^{l\times n}$, $U,V\subseteq\R^l$ subspaces. Then:
\begin{itemize}
    \item[(a)] $M^{-1}(V^\perp)=(M^T(V))^\perp$;
    \item[(b)] $\left(M^{-1}(U)\right)^\perp=M^T(U^\perp)$.
\end{itemize}
\end{lemma}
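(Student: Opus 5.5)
Part (a) will be proved by a direct chain of equivalences using the defining property of the transpose, $\ip{Mx}{v}=\ip{x}{M^Tv}$ for all $x\in\R^n$, $v\in\R^l$. Fix $x\in\R^n$. Then
\[
x\in M^{-1}(V^\perp)\iff Mx\in V^\perp\iff \ip{Mx}{v}=0\ \ \forall v\in V\iff \ip{x}{M^Tv}=0\ \ \forall v\in V\iff x\in \left(M^T(V)\right)^\perp .
\]
Every step is an equivalence, so this proves the set equality. Here $M^{-1}(\cdot)$ denotes the preimage, so $M$ need not be invertible. No finite-dimensionality is used in this step beyond the existence of the adjoint.

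For part (b), I will reduce to part (a) and use the finite-dimensional fact that $W^{\perp\perp}=W$ for every subspace $W$. Apply (a) with $V:=U^\perp$; this is legitimate since $U^\perp$ is again a subspace of $\R^l$. Because $V^\perp=U^{\perp\perp}=U$, part (a) gives
\[
M^{-1}(U)=\left(M^T(U^\perp)\right)^\perp.
\]
Now take orthogonal complements of both sides. The set $M^T(U^\perp)$ is the linear image of a subspace, hence a subspace of $\R^n$, and so it equals its double complement. This yields $\left(M^{-1}(U)\right)^\perp=M^T(U^\perp)$, as claimed.

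There is no real obstacle here. The only point needing a remark is in (b): the identity $W^{\perp\perp}=W$ requires $W$ to be a closed subspace. This holds automatically because we are in finite dimensions and $M^T(U^\perp)$ is a linear subspace. Both (a) and (b) are then immediate.
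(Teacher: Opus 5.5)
Your proof is correct and follows essentially the same route as the paper. Part (a) is the same chain of equivalences via $\ip{Mx}{v}=\ip{x}{M^Tv}$, and part (b) likewise writes $U=(U^\perp)^\perp$, applies (a) with $V=U^\perp$, and uses $W^{\perp\perp}=W$ for the finite-dimensional subspace $W=M^T(U^\perp)$.
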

\begin{proof}(a) We have 
\begin{eqnarray*}
   M^{-1}(V^\perp) & =& \set{x}{Mx\in V^\perp}\\
   & = & \set{x}{\ip{x}{M^Tv}=0\; , \, \forall v\in V}\\
   & = & \set{x}{\ip{x}{s}=0\;, \, \forall s\in M^T(V)}\\
   & = & (M^T(V))^\perp.
\end{eqnarray*}
(b)    We have 
\begin{eqnarray*}
   (M^{-1}(U))^\perp & = & (M^{-1}\left((U^\perp)^\perp\right))^\perp\\
   &\overset{(a)}{=} & \left(\left(M^T(U^\perp)\right)^\perp\right)^\perp\\
   & = & M^T(U^\perp).
\end{eqnarray*}
\end{proof}

\begin{lemma}\label{lem:Hinvert}
    Let $C, D \in \R^{n \times n}$ be symmetric positive semidefinite matrices.
    Then, $I + CD$ is invertible.
\end{lemma}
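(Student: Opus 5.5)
It suffices to show that $\ker(I+CD)=\{0\}$, since $I+CD$ is a square matrix. So I would take $x\in\R^n$ with $x+CDx=0$ and aim to conclude $x=0$. The plan is to pair this equation with $Dx$ rather than with $x$. The reason is that $C$ and $D$ are each symmetric positive semidefinite, but the product $CD$ need not be symmetric. The semidefiniteness of $C$ is only visible through quadratic forms of the type $\ip{y}{Cy}$.

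Taking the inner product of $x+CDx=0$ with $Dx$ gives
\[
0=\ip{Dx}{x}+\ip{Dx}{CDx}.
\]
Both terms on the right are nonnegative. The first is $\ip{x}{Dx}\ge 0$, using the symmetry of $D$ and $D\in\mathbb{S}^n_+$. The second is $\ip{y}{Cy}\ge 0$ with $y:=Dx$, using $C\in\mathbb{S}^n_+$. Hence both terms vanish, and in particular $x^TDx=0$.

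Next I would use the standard fact that for a symmetric positive semidefinite matrix, $x^TDx=0$ forces $Dx=0$. To see this, write $D=D^{1/2}D^{1/2}$. Then $x^TDx=\|D^{1/2}x\|^2=0$ gives $D^{1/2}x=0$, and therefore $Dx=0$. Substituting back into the equation yields $x=-CDx=-C\cdot 0=0$. So $I+CD$ is injective and hence invertible.

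There is no serious obstacle here. The only point requiring care is choosing $Dx$ as the test vector, and then invoking the fact that a vanishing quadratic form of a positive semidefinite matrix forces the matrix–vector product to vanish. As an alternative route, one could note that when $D$ is positive definite, $I+CD=(D^{-1}+C)D$ with $D^{-1}+C$ positive definite. A perturbation $D+\varepsilon I$ does not pass to the limit cleanly, though, so I would use the direct kernel argument above.
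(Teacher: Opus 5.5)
Your proof is correct and is the same as the paper's: pair $x+CDx=0$ with $Dx$, use semidefiniteness of $C$ and $D$ to get $x^TDx=0$, deduce $Dx=0$, and conclude $x=0$. Your justification of $x^TDx=0\Rightarrow Dx=0$ via $D^{1/2}$ spells out a step the paper asserts without proof.
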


\begin{proof}
    Take $v \in \ker (I + CD)$, i.e., $v + CDv = 0$.
    Left-multiplying this expression by $(Dv)^T$, we get \begin{align*}
        v^TDv + v^TDCDv = 0.
    \end{align*} By positive semidefiniteness of $C$ and $D$,
    both $v^TDv=0$ and $v^TDCDv=0$.  However, $v^TDv = 0$
    implies that $Dv=0$, thus $0=v + CDv = v$, so we
    have $v = 0$. This proves the statement.
\end{proof}

\noindent
The last lemma is more tailored to our specific needs and will come into play in our quantitative analysis of the solution maps in \Cref{cor:Quant}.

\begin{lemma}\label{lem:AHUinvert}
    Let $C \in \R^{n \times n}$ be symmetric positive semidefinite, $A \in \R^{m \times n}$, $E \subseteq \R^n$
    be a subspace such that $\ker A \cap E = \{0\}$. Let $H := I + CA^TA$ and let $U$
    have columns $\begin{bmatrix}
        u_1 \cdots u_r
    \end{bmatrix}$ which form an orthonormal basis of $H^{-1}(E)$.
    Then the following hold:
    \begin{itemize}
        \item[(a)] $\ker AHU = \{0\}$;
        \item[(b)] $H(\ker A) \subseteq \ker A$;
        \item[(c)] $\ker AU = \{0\}$;
        \item[(d)] $(AHU)^T AU$ is symmetric positive definite.
    \end{itemize}
\end{lemma}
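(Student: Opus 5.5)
The plan is to derive all four items from two facts: $H$ is invertible, and $H$ acts as the identity on $\ker A$. Invertibility comes from \Cref{lem:Hinvert} with $D:=A^TA$, which is symmetric positive semidefinite. I would also use throughout that $U$ has orthonormal columns, so $Ux=0$ forces $x=0$.

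I would prove (b) first, since it is a one-line computation. For $v\in\ker A$ we have
\[
Hv=v+CA^T(Av)=v\in\ker A,
\]
so $H$ fixes $\ker A$ pointwise.

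For (a), take $x$ with $AHUx=0$. Since $Ux\in H^{-1}(E)$, we get $HUx\in E$, and by assumption $HUx\in\ker A$. The hypothesis $\ker A\cap E=\{0\}$ then gives $HUx=0$. Invertibility of $H$ yields $Ux=0$, hence $x=0$.

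For (c), take $x$ with $AUx=0$, so $Ux\in\ker A$. By (b), $HUx=Ux\in\ker A$. Also $HUx\in E$ because $Ux\in H^{-1}(E)$. As in (a), this gives $HUx=0$, hence $Ux=0$ and $x=0$.

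For (d), I would expand
\[
(AHU)^TAU=U^TH^TA^TAU=U^T\big(A^TA+A^TACA^TA\big)U,
\]
using $H^T=I+A^TAC$ and the symmetry of $C$. The middle matrix is symmetric, so the product is symmetric. For $x\neq 0$ set $y:=A^TAUx$; then
\[
x^T(AHU)^TAUx=\|AUx\|^2+y^TCy\ \ge\ \|AUx\|^2>0,
\]
where the final strict inequality is (c). Hence the matrix is positive definite.

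I do not expect a real obstacle here. The only points needing care are the transpose identity $H^T=I+A^TAC$ in (d), and remembering that in (a) and (c) the membership $HUx\in E$ comes from $Ux\in H^{-1}(E)$.
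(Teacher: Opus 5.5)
Your proof is correct and follows the paper's argument essentially step for step: invertibility of $H$ via \Cref{lem:Hinvert} with $D=A^TA$, $H$ fixing $\ker A$ pointwise, and the expansion of $(AHU)^TAU$. The only cosmetic difference is that in (c) you rerun the argument of (a) rather than citing it. In (d), your term $y^TCy$ with $y=A^TAUx$ is the dimensionally correct form; the paper writes this term as $(AU)^TC(AU)$, which should read $(A^TAU)^TC(A^TAU)$.
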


\begin{proof}
    (a) $\ker A \cap E = \{0\}$ gives us that $H^{-1} (\ker A) \cap H^{-1}(E) = \ker H=\{0\}$,  where the latter uses \Cref{lem:Hinvert}. Observe that  
    $H^{-1}(\ker A)=\ker AH$,   and that $\rge U =H^{-1}(E)$ by construction.  Thus,  $\ker AH \cap \rge U = \{0\}$, which implies $\ker AHU = \{0\}$.

    (b) Let $y \in \ker A$. Then, $Hy = y + C A^TAy$, but $Ay = 0$, so $Hy = y$. Thus, $Hy \in \ker A$,
    which tells us $H(\ker A) \subseteq \ker A$.

    (c) Let $y \in \ker AU$. Then, $Uy \in \ker A$, so by (c), $HUy \in \ker A$.
    This gives us $y \in \ker AHU$, hence $y=0$ by (a). Thus, $\ker AU = \{0\}$.

(d) 
Note that
 \begin{align*}
   (AHU)^T (AU) = U^T \left(I + A^TA C\right) A^TAU = (AU)^T(AU) + (AU)^TC(AU).
 \end{align*} 
Thus, $(AHU)^T (AU)$ is symmetric and  by (c),  $(AU)^T(AU)$ is positive definite which then gives the desired statement.
\end{proof}

\section{An Implicit Function Theorem for Monotone Maps}\label{sec:Implicit}

\noindent
In this section we present an implicit function theorem for monotone (set-valued) maps that will be a main tool for establishing the (variational-)analytic properties (such as directional differentiability, local Lipschitzness) of the solution maps in which we are interested. To this end, we need to extend our variational analysis toolkit by a notion of {\em semismoothness} for set-valued maps which was popularized by Gfrerer and Outrata \cite{GfO 21}.


\begin{definition}[Semismoothness*]
	The set $A\subseteq \R^d$ is {\em semismooth*} at $\bar x\in A$ if  
	\[
	\ip{x^*}{u}=0\quad \forall u\in \textcolor{purple}{\R^d},\;  x^*\in N_A(\bar x;u)
	\]
    where $N_A(\x ; u)$ is the \emph{directional normal cone} to $A$ at $\x$ in direction $u$ (see \cite{GfO 21} for definition.)
	The map $S:\R^{n} \rightrightarrows \R^{m}$ is {\em semismooth*} at $(\bar x,\bar y)\in \gph S$ if $\gph S$ is semismooth* at $(\bar x,\bar y)$, i.e.,
	\[
	\ip{u}{u^*}=\ip{v}{v^*}\quad \forall (u,v)\in \R^{n} \times \R^{m},\; (v^*,u^*)\in\gph D^*S((\bar x,\bar u); (u,v))
	\] where $D^*S((\x, \bar u) ; (u, v))$ is the \emph{directional coderivative} (also in \cite{GfO 21}.)
\end{definition}

\noindent
Recall also the classical notion of semismoothness, initally introduced by Mifflin in \cite{Mif 77}, and later generalized to  the vector-valued case by Qi and Sun \cite{QS 93}.

\begin{definition}[Semismoothness]
    A function $F : \R^{n} \to \R^{m}$ is called \emph{semismooth} at $\x$ if $F$ is locally Lipschitz at $\x$, and for any $h \in \R^{n}$, the following limit exists:
    \begin{align*}
        \lim_{
        \substack{V \in \partial_CF(x + t h') \\
        h' \to h, \ t \downarrow 0
        }} Vh'
    \end{align*} where $\partial_C F$ is  {\em Clarke's generalized Jacobian} operator \footnote{$\partial_C F(x):=\co \set{V}{\exists \{x_k\in D_F \}\to x: DF(x_k)\to V}$ where $D_F$ is the set of differentiability of $F$.} \cite{Cla 83}.
\end{definition}

\noindent
The significance of semismoothness* for our study is highlighted by the following result from \cite[Theorem 3.8]{GfO 21}, which illustrates the intimate connection between the notions of semismoothness and semismoothness*.

\begin{lemma}[Semismooth vs. semismooth*]\label{lem:Semi} Let $F:D\subseteq \R^{n} \to \R^{m}$ be locally Lipschitz at $x\in\inter D$.  Then the following are equivalent:
\begin{enumerate}
\item[(i)] $F$ is semismooth at $x$; 
\item[(ii)] $F$ is semismooth* and directionally differentiable at $x$.
\end{enumerate}
\end{lemma}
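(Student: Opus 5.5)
We prove both implications through one link between the graph of $F$ and its Jacobians. At any point $z$ where $F$ is differentiable, $\hat N_{\gph F}(z,F(z))=\set{(DF(z)^Tw,-w)}{w\in\R^m}$. More generally, for locally Lipschitz $F$, every regular normal $(u^*,-v^*)$ to $\gph F$ at $(z,F(z))$ satisfies $u^*\in\set{V^Tv^*}{V\in\partial_C F(z)}$, a standard coderivative estimate for Lipschitz maps. By \cite{GfO 21}, a directional normal $(u^*,-v^*)\in N_{\gph F}((x,F(x));(u,v))$ arises as a limit of regular normals $(u_k^*,-v_k^*)$ at points $(x_k,F(x_k))$. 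These points satisfy $t_k\downarrow 0$ and $(x_k-x)/t_k\to u$, $(F(x_k)-F(x))/t_k\to v$. Semismoothness* then amounts to checking $\ip{u^*}{u}=\ip{v^*}{v}$ for all such limits.

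\emph{(i)$\Rightarrow$(ii).} Directional differentiability is part of the standard theory of semismooth maps: the limit in the definition, taken with $h'=h$, yields $F'(x;h)$. For semismoothness*, take sequences as above and write $u_k^*=V_k^Tv_k^*$ with $V_k\in\partial_C F(x_k)$. The $V_k$ are bounded by the Lipschitz constant, so after passing to a subsequence $V_k\to V$. We use the equivalent Qi--Sun form of semismoothness: $F(x+h)-F(x)-V_h h=o(\|h\|)$ uniformly over $V_h\in\partial_C F(x+h)$. With $h=x_k-x$ this gives $V_k\frac{x_k-x}{t_k}\to v$. Hence
\[
\ip{u^*}{u}=\lim_k\ip{v_k^*}{V_k\tfrac{x_k-x}{t_k}}=\ip{v^*}{v}.
\]

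\emph{(ii)$\Rightarrow$(i).} Argue by contradiction. Suppose there are $h_k\to 0$, $V_k\in\partial_C F(x+h_k)$ and $\varepsilon>0$ with $\|F(x+h_k)-F(x)-V_kh_k\|\ge\varepsilon\|h_k\|$. This is again the Qi--Sun characterization, valid given local Lipschitzness and directional differentiability.

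By Carath\'eodory, $V_k$ is a convex combination of finitely many limiting Jacobians. The error map $V\mapsto F(x+h_k)-F(x)-Vh_k$ is affine, so one of these limiting Jacobians has error at least $\varepsilon\|h_k\|$. Approximating it by a true Jacobian $DF(z_k)$ at a differentiability point $z_k$ with $\|z_k-x-h_k\|=o(\|h_k\|)$ keeps the inequality with $\varepsilon/2$, thanks to the Lipschitz continuity of $F$. Now choose a unit $w_k$ with
\[
\ip{w_k}{F(z_k)-F(x)-DF(z_k)(z_k-x)}\ge\tfrac{\varepsilon}{2}t_k,\qquad t_k:=\|z_k-x\|.
\]
Then $(DF(z_k)^Tw_k,-w_k)$ is a regular normal at $(z_k,F(z_k))$. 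Pass to subsequences so that $(z_k-x)/t_k\to u$, $w_k\to w$ and $DF(z_k)^Tw_k\to u^*$. Directional differentiability gives $(F(z_k)-F(x))/t_k\to v=F'(x;u)$, so $(u^*,-w)$ is a directional normal in direction $(u,v)$. Dividing the inequality by $t_k$ and letting $k\to\infty$ yields $\ip{w}{v}-\ip{u^*}{u}\ge\varepsilon/2$. This contradicts semismoothness*.

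The main obstacle is (ii)$\Rightarrow$(i), namely reducing from Clarke Jacobians to genuine Jacobians at differentiability points with controlled error. Only at differentiability points is there an explicit description of regular normals to the graph. Directional differentiability is essential there: it makes the graph direction $v$ equal to $F'(x;u)$ along the chosen subsequence.
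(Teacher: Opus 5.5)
Your proof is correct. The paper gives no proof of this lemma; it only imports it from \cite[Theorem 3.8]{GfO 21}. Your argument is essentially a self-contained reconstruction of the reasoning behind that citation. For (i)$\Rightarrow$(ii) you combine the inclusion $\hat D^*F(z)(v^*)\subseteq\{V^Tv^* : V\in\partial_C F(z)\}$ with the Qi--Sun characterization of semismoothness. For (ii)$\Rightarrow$(i) you use Carath\'eodory to reduce to B-Jacobians, then approximate by a genuine Jacobian at a differentiability point, which produces a directional normal that violates semismoothness*. I checked the error bookkeeping: the error is at least $(\varepsilon-(2L+1)\delta_k)\|h_k\|$ and $\|z_k-x\|\le(1+\delta_k)\|h_k\|$, so the inequality with $\varepsilon/2$ does go through. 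The citation buys brevity. Your version buys transparency: it shows that, for Lipschitz $F$, semismoothness* is exactly the uniform estimate $\sup_{V\in\partial_C F(x+h)}\|F(x+h)-F(x)-Vh\|=o(\|h\|)$, which is the content of Gfrerer--Outrata's Proposition 3.7. The lemma then collapses to Qi--Sun's theorem. Note that you still rely on two external facts: Qi--Sun's Theorem 2.3 (together with B-differentiability of Lipschitz, directionally differentiable maps) and the coderivative/Clarke-Jacobian inclusion. So your argument is self-contained only modulo these.

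Two small corrections to your commentary, neither of which affects validity. First, your closing remark misplaces the role of directional differentiability. In building the directional normal you do not need $v=F'(x;u)$. The quotients $(F(z_k)-F(x))/t_k$ are bounded by the Lipschitz constant, and any cluster point $v$ gives a directional normal in direction $(u,v)$ and the same contradiction. Directional differentiability is needed only to invoke the Qi--Sun equivalence, which is precisely why semismoothness* alone does not yield semismoothness. Second, the parenthetical ``taken with $h'=h$, yields $F'(x;h)$'' is a gloss. Identifying the limit of $Vh$ with the limit of difference quotients requires the mean-value inclusion $F(x+th)-F(x)\in t\,\co\{Vh : V\in\partial_C F(y),\ y\in[x,x+th]\}$. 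Since you label this as standard theory, it is acceptable.
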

\noindent
We emphasize that, in particular, a semismooth function is always locally Lipschitz (by definition) and  directionally differentiable.

Recall the following result from \cite[Proposition 2]{FGH 22} which is a preimage rule for semismooth* sets.

\begin{proposition}[Metric regularity and semismoothness*]\label{prop:Semi} Let $H: \R^{n} \to \R^{m}$ be continuously differentiable at $\bar z$,  let $Q\subseteq\R^{m}$ be semismooth* (as a set) at $H(\bar z)$ and let $\Psi: \R^{n} \rightrightarrows \R^{m},\; \Psi(z):=H(z)-Q$ be metrically  
(sub)regular at $(\bar z,0)$. Then $ H^{-1}(Q)$ is semismooth* at $\bar z$ (as a set).
\end{proposition}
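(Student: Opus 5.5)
The plan is to verify the definition of semismoothness* directly for the set $C:=H^{-1}(Q)$ at $\bar z$. We must show that $\ip{z^*}{u}=0$ for every $u\in\R^n$ and every $z^*\in N_C(\bar z;u)$. The whole argument reduces to a \emph{directional preimage rule} for normal cones under metric subregularity:
\[
N_C(\bar z;u)\subseteq \nabla H(\bar z)^T N_Q\big(H(\bar z);\nabla H(\bar z)u\big).
\]
Once this inclusion is available, the conclusion is one line. Write $z^*=\nabla H(\bar z)^Tq^*$ with $q^*\in N_Q(H(\bar z);\nabla H(\bar z)u)$. Then
\[
\ip{z^*}{u}=\ip{q^*}{\nabla H(\bar z)u}=0,
\]
where the last equality is the semismoothness* of $Q$ at $H(\bar z)$ applied in the direction $\nabla H(\bar z)u$. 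If $N_C(\bar z;u)=\emptyset$, which happens e.g.\ when $u\notin T_C(\bar z)$, there is nothing to prove.

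To establish the inclusion, I would unwind the definition of the directional normal cone from \cite{GfO 21}. For $z^*\in N_C(\bar z;u)$ there are $t_k\downarrow 0$, $u_k\to u$ and $z_k^*\to z^*$ such that $z_k:=\bar z+t_ku_k\in C$ and $z_k^*\in \hat N_C(z_k)$.

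The next step converts each regular normal into a multiplier.
\begin{itemize}
\item Since $z_k^*\in\hat N_C(z_k)$, for any $\varepsilon_k\downarrow0$ the point $z_k$ is a local minimizer of $-\ip{z_k^*}{z}+\varepsilon_k\|z-z_k\|$ on $C$.
\item Metric subregularity of $\Psi$ at $(\bar z,0)$ gives the error bound $d(z,C)\le\kappa\, d(H(z),Q)$ near $\bar z$.
\item By Clarke's exact penalty principle, $z_k$ is then a local minimizer of the unconstrained function
\[
-\ip{z_k^*}{z}+\varepsilon_k\|z-z_k\|+\kappa(\|z_k^*\|+\varepsilon_k)\,d(H(z),Q).
\]
\item Applying the limiting subdifferential chain rule for $d(\cdot,Q)\circ H$ yields
\[
z_k^*\in \nabla H(z_k)^Tq_k^*+\varepsilon_k\B,
\]
with $q_k^*\in N_Q(H(z_k))$ and $\|q_k^*\|\le\kappa(\|z_k^*\|+\varepsilon_k)$.
\end{itemize}
Here I use that $H$ is $\mathcal C^1$ on a neighborhood of $\bar z$, which is how I read ``continuously differentiable at $\bar z$''. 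I also use that $H(z_k)\in Q$, so the subgradients of the distance function at $H(z_k)$ lie in $N_Q(H(z_k))\cap\B$.

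The multipliers are bounded, so after passing to a subsequence $q_k^*\to q^*$, and $z^*=\nabla H(\bar z)^Tq^*$ because $\nabla H$ is continuous. It remains to check that $q^*$ is a \emph{directional} normal in direction $\nabla H(\bar z)u$. We have $H(z_k)=H(\bar z)+t_k w_k$ with
\[
w_k:=\frac{H(\bar z+t_ku_k)-H(\bar z)}{t_k}\to\nabla H(\bar z)u.
\]
Moreover $q_k^*$ is a limiting normal at $H(z_k)\in Q$. A diagonal argument then replaces each $q_k^*$ by a nearby regular normal at a point $H(\bar z)+t_k w_k'$ with $w_k'$ close to $w_k$. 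This puts $q^*$ in $N_Q(H(\bar z);\nabla H(\bar z)u)$.

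I expect the main obstacle to be the multiplier step in the second paragraph. The issue is deriving the representation of $z_k^*$ with a \emph{uniform} bound on $q_k^*$ from subregularity alone, which holds only at the single reference point $(\bar z,0)$. The key observation is that the error bound $d(z,C)\le\kappa\,d(H(z),Q)$ holds on a whole neighborhood of $\bar z$. It therefore applies at every $z_k$ for $k$ large, so exact penalization works uniformly in $k$ and keeps the constant $\kappa$ independent of $k$. The directional bookkeeping in the last step is then routine.
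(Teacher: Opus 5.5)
Your proof is correct. The paper does not prove this statement itself; it only quotes it from \cite{FGH 22}. Your route is the standard one behind such preimage rules: first establish the directional inclusion $N_C(\bar z;u)\subseteq \nabla H(\bar z)^T N_Q(H(\bar z);\nabla H(\bar z)u)$ under metric subregularity, in the spirit of the directional normal-cone calculus of Gfrerer and Outrata. Semismoothness* of $Q$ applied in the direction $\nabla H(\bar z)u$ then gives $\ip{z^*}{u}=0$ in one line.

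What you add over a citation is a self-contained proof of that inclusion, and each step checks out:
\begin{itemize}
\item The error bound $d(z,C)\le\kappa\, d(H(z),Q)$ holds on a fixed neighborhood of $\bar z$. So Clarke's exact penalization applies at every $z_k$ with a $k$-independent constant.
\item The limiting chain rule, together with $\partial d_Q(y)=N_Q(y)\cap\B$ for $y\in Q$, gives multipliers $q_k^*$ that are uniformly bounded.
\item The diagonal step correctly turns limiting normals at $H(\bar z)+t_kw_k$ into regular normals at points $H(\bar z)+t_kw_k'$ with $w_k'-w_k\to 0$.
\end{itemize}

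Two small hygiene points. First, the formula $\partial d_Q(y)=N_Q(y)\cap\B$, and the setting of \cite{GfO 21}, require $Q$ to be closed, at least locally around $H(\bar z)$; you should state this. Second, you genuinely use the reading of ``continuously differentiable at $\bar z$'' as $\mathcal C^1$ on a neighborhood of $\bar z$, since you apply the chain rule at the points $z_k$. That reading is consistent with how the paper uses the phrase.
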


\noindent
As a consequence we obtain  a result about semismoothness* of implicit functions.

\begin{corollary}\label{cor:SemiImplicit} Let $f: \R^m \times \R^n \to \R^n$ be continuously differentiable at $(\bar p,\bar x)$, and let $F:\R^n \rightrightarrows \R^n$ be semismooth* at $(\bar x,-f(\bar p,\bar x))$ (in particular, $0\in f(\bar p,\bar x)+ F(\bar x)$). Define $S:\R^m \rightrightarrows \R^n$ via 
\[
S(p)=\set{x\in \R^n}{0\in f(p,x)+F(x)}.
\]
Then $S$ is semismooth* at $(\bar p,\bar x)$ provided that the set-valued mapping $\Psi:\R^m\times \R^n \rightrightarrows \R^n \times \R^n$ defined by  $\Psi(p,x)=(x,-f(p,x))-\gph F$ is metrically regular at $(\bar p,\bar x,0)$ \footnote{In fact, we only require $\Psi$ to be \emph{metrically subregular} at the point in question (see \cite[Chapter 3H]{DoR 14})}.  This is equivalent to the following implication being satisfied:
\begin{equation}\label{eq:MordSemi}
\left.\begin{array}{rcl}
-(v,w)&\in &N_{\gph F}(\bar x,-f(\bar p,\bar x)),\\
0 & = & D_pf(\bar p,\bar x)^*w,\\
v & = & D_xf(\bar p,\bar x)^*w
\end{array}\right\}  \quad \Longrightarrow\quad (v,w)=(0,0).
\end{equation}

\end{corollary}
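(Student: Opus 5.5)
The plan is to write $\gph S$ as a preimage of $\gph F$ under a smooth map and then invoke \Cref{prop:Semi}. Set $z:=(p,x)\in\R^m\times\R^n$ and
\[
H:\R^m\times\R^n\to\R^n\times\R^n,\qquad H(p,x):=(x,-f(p,x)),
\]
and take $Q:=\gph F$. First I check that
\[
\gph S=\set{(p,x)}{-f(p,x)\in F(x)}=H^{-1}(\gph F).
\]
Since $f$ is continuously differentiable at $(\bar p,\bar x)$, so is $H$, with $H(\bar p,\bar x)=(\bar x,-f(\bar p,\bar x))$. Semismoothness* of $F$ at $(\bar x,-f(\bar p,\bar x))$ is by definition semismoothness* of the set $Q=\gph F$ at $H(\bar z)$. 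The map $\Psi(z)=H(z)-Q$ is exactly the one in the statement. Hence, whenever $\Psi$ is metrically (sub)regular at $(\bar z,0)$, \Cref{prop:Semi} gives that $H^{-1}(\gph F)=\gph S$ is semismooth* at $(\bar p,\bar x)$. That is the first claim.

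For the equivalence with \eqref{eq:MordSemi}, I apply the Mordukhovich criterion (\Cref{thm:coderivativeCriterion}) to $\Psi$. The graph of $\Psi$ is closed provided $\gph F$ is closed; this holds locally, or is a standing assumption, and I expect this closedness to be the point requiring a tacit assumption.

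Next I write $\Psi=H+G$ with $G(z):=-\gph F$, a constant set-valued map. By \Cref{lem:Sum}(b),
\[
D^*\Psi(\bar z|0)(y)=DH(\bar z)^*y+D^*G(\bar z|-H(\bar z))(y).
\]
Because $\gph G=\R^{m+n}\times(-\gph F)$, its normal cone at $(\bar z,-H(\bar z))$ is $\{0\}\times N_{-\gph F}(-H(\bar z))=\{0\}\times\bigl(-N_{\gph F}(H(\bar z))\bigr)$. Consequently $D^*G(\bar z|\cdot)(y)=\{0\}$ when $y\in N_{\gph F}(H(\bar z))$, and is empty otherwise. This is the same mechanism as in \Cref{lem:CodAux}.

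So $0\in D^*\Psi(\bar z|0)(y)$ means $y\in N_{\gph F}(H(\bar z))$ and $DH(\bar z)^*y=0$. I write $y=-(v,w)$ with $v,w\in\R^n$. The block structure of $DH$ is
\[
DH(\bar z)=\begin{bmatrix}0 & I\\ -D_pf & -D_xf\end{bmatrix},
\]
which gives
\[
DH(\bar z)^*(-v,-w)=\bigl(D_pf^*w,\;-v+D_xf^*w\bigr).
\]
Setting this equal to zero yields $D_pf(\bar p,\bar x)^*w=0$ and $v=D_xf(\bar p,\bar x)^*w$. Together with $-(v,w)\in N_{\gph F}(\bar x,-f(\bar p,\bar x))$, the condition $\ker D^*\Psi=\{0\}$ is exactly \eqref{eq:MordSemi}.

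The main bookkeeping risk lies in the signs and in the ordering of components in the coderivative of the constant map. I will verify these carefully, although the sign convention $-(v,w)$ is just a relabeling of $y$ and does not affect the conclusion.
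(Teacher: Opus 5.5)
Your proposal is correct and follows the paper's proof essentially step for step. Like the paper, you write $\gph S=H^{-1}(\gph F)$ with $H(p,x)=(x,-f(p,x))$, invoke \Cref{prop:Semi}, and then derive \eqref{eq:MordSemi} from the Mordukhovich criterion applied to $\Psi$, using the coderivative sum rule and the block form of $DH(\bar z)^*$. Your explicit substitution $y=-(v,w)$ and your remark that \Cref{thm:coderivativeCriterion} tacitly needs closedness of $\gph F$ (hence of $\gph\Psi$) are, if anything, slightly more careful than the paper's own sign bookkeeping.
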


\begin{proof} Define the function $H:\R^m \times \R^n \to \R^n \times \R^n,\; H(p,x)=(x,-f(p,x))$, which is continuously differentiable at $(\bar p,\bar x)$. We note that 
\begin{eqnarray*}
(p,x)\in \gph S & \Longleftrightarrow & (x,-f(p,x)) \in \gph F\\
& \Longleftrightarrow & (p,x) \in H^{-1}(\gph F),
\end{eqnarray*}
i.e. $\gph S=H^{-1}(\gph F)$. 
Since $F$ is assumed  semismooth* at $(\bar x,-f(\bar p,\bar x))$, by definition, $\gph F$ is semismooth* (as a set) at $(\bar p,\bar x)$. Consequently, \Cref{prop:Semi} (with $Q=\gph F$) yields that $\gph S=H^{-1}(\gph F)$ is semismooth* at $(\bar p,\bar x,0)$ if $\Psi$ is metrically subregular at $(\bar p,\bar x,0)$. 

To prove the remainder,  recall that, by the Mordukhovich criterion (\Cref{thm:coderivativeCriterion}),   $\Psi$ is metrically regular at $(\bar p,\bar x,0)$ if and only if the following implication holds: 
\begin{equation}
0\in D^*\Psi(\bar p,\bar x|0)(v,w) \quad \Longrightarrow \quad (v,w)=(0,0).
\end{equation}
Now, observe that, by the coderivative sum rule \Cref{lem:Sum}, we have 
\begin{eqnarray*}
D^*\Psi(\bar p,\bar x|0)(v,w) & = & DH(\bar p,\bar x)^*(v,w)+\begin{cases} \{0\}\times \{0\}, &-(v,w) \in N_{\gph F}(H(\bar p,\bar x)),\\ \emptyset, & \text{else}.\end{cases}
\end{eqnarray*}
Since $DH(\bar p,\bar x)^*(v,w)=[-D_pf(\bar p,\bar x)^*w, \; v-D_xf(\bar p,\bar x)^*w]$, we find that  $0\in D^*\Psi(\bar p,\bar x|0)(v,w)$ holds if and only if

\begin{equation}
\begin{array}{rcl}
-(v,w)&\in &N_{\gph F}(\bar x,-f(\bar p,\bar x)),\\
0 & = & D_pf(\bar p,\bar x)^*w,\\
v & = & D_xf(\bar p,\bar x)^*w.
\end{array}
\end{equation}
This concludes the proof.
\end{proof}

\noindent 
 We now present the  advertized implicit function theorem which constitutes a polished and substantially amended version of   \cite[Proposition 4.11]{BBH 22}.



\begin{proposition}\label{prop:Implicit} Let $(\bar p,\bar x)\in \R^d\times \R^n$, let  $F:\mathbb \R^n\rightrightarrows \R^n$ be maximally  monotone, and  let $f:\R^d\times \R^n\to \R^n$  be continuously differentiable at $(\bar p, \bar x)$ such that $f(p,\cdot)$ is  monotone for any $p$ near $\bar p$. Define $S:\R^d\rightrightarrows\R^n$ by 
\[
S(p)=\set{x\in \R^n}{0\in f(p,x)+F(x)}, \quad \forall p \in \mathbb{R}^d.
\]
Assume that  $(\bar p,\bar x)\in \gph S$  (i.e., $0\in f(\bar p,\bar x)+F(\bar x)$) such that
\begin{equation}\label{eq:MordCrit}
   0\in D_x f(\bar p,\bar x)^*w+D^*F(\bar x\mid -f(\bar p,\bar x))(w)\quad\Longrightarrow \quad w=0,
\end{equation}
or, equivalently, 
\begin{equation}\label{eq:KernelCQ}
\ker D_xf(\bar p,\bar x) \cap \ker D^*F(\bar x|-f(\bar p,\bar x))=\{0\}.
\end{equation}
Then the following hold:
\begin{itemize}
    \item[(a)]  $Q:=f(\bar p,\cdot)+F:\R^n\rightrightarrows \R^n$ is  {\em strongly metrically regular} at $(\bar x,0) \in \gph Q$ (and maximally monotone).
    \item[(b)]   $S$ is (single-valued) locally Lipschitz at $\bar p$. 
    \item[(c)] If $F$ is {\em proto-differentiable}  at $(\bar x, -f(\bar p,\bar x))$, then  the graphical derivative $DS(\bar p|\bar x)$ is single-valued and locally  Lipschitz with 
    \begin{equation}
    \label{eq:graphical_derivative}
     DS(\bar p)(q)=\set{w \in \mathbb{R}^n}{0\in DG(\bar p,\bar x|0)(q,w)}, \quad \forall q \in \mathbb{R}^d,
    \end{equation}
    for $G(p,x):=f(p,x)+F(x)$.
    In particular, $S$ is directionally differentiable\footnote{i.e., $S'(\bar x;d):=\lim_{t\downarrow 0}\frac{S(\bar x+td)}{t}$ exists for all $d\in \R^d$.} at $\bar p$ with (locally Lipschitz) directional derivative
    \[
    S'(\bar p;\cdot)=DS(\bar p)(\cdot).
    \]
    In addition,  $S$ is locally Lipschitz at $\bar p$ with  modulus
    \[
    L\leq\limsup_{p\to \bar p} \max_{\|q\|\leq 1} \|DS(p)(q)\|.
    \]
    \item[(d)]  If $F$ is semismooth* at $\bar x$, then $S$ is semismooth* at $\bar p$. Thus, if $F$ is also proto-differentiable at $\bar x$, then $S$ is semismooth at $\bar p$.

\end{itemize}
\end{proposition}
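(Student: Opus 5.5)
We first treat (a). The map $Q=f(\bar p,\cdot)+F$ is maximally monotone, since $f(\bar p,\cdot)$ is continuous, single-valued and monotone with full domain, and the sum of a maximally monotone operator with such a map is maximally monotone. In particular $\gph Q$ is closed. By the coderivative sum rule \Cref{lem:Sum}(b),
\[
D^*Q(\bar x|0)(w)=D_xf(\bar p,\bar x)^*w+D^*F(\bar x|-f(\bar p,\bar x))(w),
\]
so \eqref{eq:MordCrit} is precisely $\ker D^*Q(\bar x|0)=\{0\}$. \Cref{thm:coderivativeCriterion} therefore gives metric regularity of $Q$ at $(\bar x,0)$, and \Cref{thm:monotonicityAndMetricRegularity} upgrades this to strong metric regularity. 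For the equivalence of \eqref{eq:MordCrit} and \eqref{eq:KernelCQ} we would use monotonicity. For a monotone $F$, any $u\in D^*F(\bar x|\bar v)(w)$ satisfies $\langle u,w\rangle\ge 0$; this is a known positive-semidefiniteness property of coderivatives of maximally monotone maps, obtained via Minty parametrization. The Jacobian $M=D_xf(\bar p,\bar x)$ is positive semidefinite. From $-M^Tw\in D^*F(\cdot)(w)$ we get $\langle M^Tw,w\rangle\le 0$, hence $w^TMw=0$. Since $M$ is psd (not necessarily symmetric), this yields $(M+M^T)w=0$. The remaining step, showing that $Mw=0$ and $0\in D^*F(w)$, is delicate. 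One route is to use a Minty transformation of $F$ so that the coderivative becomes a psd-type object, and then argue as in \Cref{lem:Hinvert}. I expect this to be a technical point.

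For (b), we would \emph{not} use (a) directly, since the parameter enters through $f$. Instead we view $S$ as $S(p)=\{x:0\in G(p,x)\}$ with $G(p,x)=f(p,x)+F(x)$. By \Cref{lem:Sum} and \Cref{lem:CodAux}, $D^*G(\bar p,\bar x|0)(w)=(D_pf^*w,\;D_xf^*w+D^*F(w))$, so the condition \eqref{eq:MordCrit} says that $G$ is metrically regular with respect to $x$ uniformly in $p$. The parametric (implicit-function) version of the Mordukhovich criterion, \cite[Theorem 3G.?]{DoR 14} or \cite[Theorem 9.56]{RoW 98}, then gives the Aubin property of $S$ at $\bar p$ for $\bar x$. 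To get a single-valued localization, we combine this with strong metric regularity from (a), via the Robinson-type implicit function theorem for strongly regular maps with a Lipschitz perturbation $p\mapsto f(p,\cdot)-f(\bar p,\cdot)$. This gives a Lipschitz localization of $S$. Each $S(p)$ is the zero set of the maximally monotone map $f(p,\cdot)+F$, hence convex, so \Cref{lem:localizationAndConvexity} shows that $S$ itself is single-valued and Lipschitz near $\bar p$.

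For (c), proto-differentiability of $F$ transfers to $G$ by the smooth sum, and the graphical-derivative sum rule \Cref{lem:Sum}(a) identifies $T_{\gph S}$. The set $\gph S$ equals $\{(p,x):(p,x,0)\in\gph G\}$. Using the Aubin property together with proto-differentiability, the tangent cone is the inverse image, which gives \eqref{eq:graphical_derivative}. Single-valuedness comes from the injectivity of $w\mapsto DQ(\bar x|0)(w)$, which is strong regularity expressed at the graphical-derivative level. For a Lipschitz single-valued $S$, proto-differentiability of $\gph S$ with a single-valued derivative is equivalent to directional differentiability, with $S'(\bar p;\cdot)=DS(\bar p)$. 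The modulus bound follows by integrating along segments: Lipschitz $S$ has $\|S(p)-S(p')\|\le\sup\|DS\|\,\|p-p'\|$, via a mean-value argument on the Lipschitz function $t\mapsto S(p+t(p'-p))$, whose derivative a.e. is $DS(\cdot)(p'-p)$.

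For (d), we apply \Cref{cor:SemiImplicit}, noting that $F$ being semismooth* means it is semismooth* as a map at $(\bar x,-f(\bar p,\bar x))$. The condition \eqref{eq:MordSemi} follows from \eqref{eq:MordCrit}. Given $-(v,w)\in N_{\gph F}$ with $v=D_xf^*w$, we have $-D_xf^*w\in D^*F(\bar x|\cdot)(w)$, so $w=0$ and hence $v=0$. Thus $S$ is semismooth*. With proto-differentiability, (c) gives directional differentiability, and \Cref{lem:Semi} together with (b) yields semismoothness. The main obstacle is the kernel equivalence in (a) and, in (b), correctly invoking a parametric implicit function theorem.
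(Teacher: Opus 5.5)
Your treatment of (a)--(d) runs entirely on \eqref{eq:MordCrit} and follows the paper's route. For (a) you use the sum rule, the Mordukhovich criterion and the monotone upgrade to strong regularity. For (b) you use a Robinson-type implicit function theorem plus Lemma~\ref{lem:localizationAndConvexity}. For (c) you use proto-differentiability of $G$, and for (d) Corollary~\ref{cor:SemiImplicit} and Lemma~\ref{lem:Semi}. The genuine gap is the step you call delicate, the equivalence of \eqref{eq:MordCrit} and \eqref{eq:KernelCQ}. No Minty argument can close it, because the direction you need is false.

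Put $M:=D_xf(\bar p,\bar x)$. The implication \eqref{eq:MordCrit}$\Rightarrow$\eqref{eq:KernelCQ} does hold, since a positive semidefinite, possibly nonsymmetric, $M$ satisfies $\ker M=\ker M^T$. Indeed, if $Mw=0$, then $0\le (x+tw)^TM(x+tw)=x^TMx+t\,w^TMx$ for all $t\in\R$, which forces $M^Tw=0$. Hence any $w\in\ker M\cap\ker D^*F(\bar x|-f(\bar p,\bar x))$ satisfies the premise of \eqref{eq:MordCrit}, so $w=0$. The converse fails. Take
\[
f(p,x)=Rx-p,\quad R=\begin{pmatrix}0&1\\-1&0\end{pmatrix},\quad F=N_L,\quad L=\R\times\{0\},\quad (\bar p,\bar x)=(0,0).
\]
Here $f(p,\cdot)$ is monotone because $R$ is skew, $F$ is maximally monotone, and $\ker R=\{0\}$, so \eqref{eq:KernelCQ} holds trivially. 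But $\gph N_L=L\times L^\perp$ gives $D^*F(0|0)(w)=L^\perp$ for $w\in L$. Since $-R^Te_1=-e_2\in L^\perp$ (with $e_1,e_2$ the unit vectors), \eqref{eq:MordCrit} fails at $w=e_1$. Indeed $S(0)=\{x\in L\,:\,Rx\in L^\perp\}=L$ is not a singleton, so even (b) fails under \eqref{eq:KernelCQ} alone.

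The paper's own proof breaks at exactly your step: it passes from $\langle M^Tw,w\rangle=0$ to $w\in\ker M$, which is valid only for symmetric $M$. In the symmetric case your identity $(M+M^T)w=0$ already gives $Mw=M^Tw=0$. Then $0\in D^*F(\bar x|-f(\bar p,\bar x))(w)$, and \eqref{eq:KernelCQ} forces $w=0$. So the right repair is one of two things. Either assume $D_xf(\bar p,\bar x)$ is symmetric, which holds for $f(p,x)=\frac1\lambda A^T(Ax-b)$, the only case used later. Or keep \eqref{eq:MordCrit} as the sole hypothesis.

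There are also two smaller corrections, and one point in your favour.
\begin{itemize}
\item In (b), the Dontchev--Rockafellar implicit function theorem (their Theorem 3G.4) needs more than $p\mapsto f(p,\cdot)-f(\bar p,\cdot)$ being Lipschitz. You need $e(p,x):=f(p,x)-f(\bar p,x)$ to have Lipschitz modulus in $x$ near $\bar x$ tending to $0$ as $p\to\bar p$, i.e.\ $f(\bar p,\cdot)$ is a strict estimator with constant $0$. The paper derives this from continuity of $D_xf$. With that in hand the Aubin-property detour is unnecessary, and you do end up using (a) after all.
\item In (c), single-valuedness of $DS(\bar p)$ is not a consequence of strong regularity. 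Take $s(y)=y(2+\sin\log|y|)$ with $s(0)=0$; it is increasing and bi-Lipschitz on $\R$. Then $Q=s^{-1}$ is maximally monotone and strongly metrically regular at $(0,0)$, yet $DQ(0|0)^{-1}(1)=Ds(0)(1)=[1,3]$. What gives single-valuedness is proto-differentiability combined with the Lipschitz localization (semidifferentiability), which is what the implicit-mapping theorem cited by the paper provides.
\item Your mean-value argument for the modulus bound is correct: differentiate $t\mapsto S(p+t(p'-p))$ a.e.\ and note each derivative lies in $DS(\cdot)(p'-p)$. It is a more elementary substitute for the paper's appeal to Dontchev--Rockafellar's Theorem 4B.2.
\end{itemize}
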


\begin{proof}  We first show that \eqref{eq:MordCrit} and \eqref{eq:KernelCQ} are equivalent. To this end, observe that
\[
0\in D_x f(\bar p,\bar x)^*w+D^*F(\bar x\mid -f(\bar p,\bar x))(w)
\]
is, equivalent to, 
\[
- D_x f(\bar p,\bar x)^*w\in D^*F(\bar x\mid -f(\bar p,\bar x))(w).
\]
By the monotonicity properties of $f(\bar p, \cdot)$ and $F$ combined with \cite[Theorem~2.1]{PR 98}, this implies
\[
0\leq-\ip{ D_x f(\bar p,\bar x)^*w}{w}\leq 0,
\]
and  hence, $w\in \ker  D_x f(\bar p,\bar x)$. Therefore, we have
\[
0\in D^*F(\bar x\mid -f(\bar p,\bar x))(w),
\]
or, equivalently, 
\[
w\in \ker  D^*F(\bar x| -f(\bar p,\bar x)).
\]
Therefore,  \eqref{eq:MordCrit} implies \eqref{eq:KernelCQ}. The  reverse implication follows  from  the same observations, yet starting at \eqref{eq:KernelCQ}.
\smallskip

\noindent
(a) Because $Q$ is a sum of maximally monotone maps\footnote{Note that a single-valued continuous monotone map is maximally monotone, see, e.g., \cite[Example 12.7]{RoW 98}. } and $\dom f(\bar p,\cdot)=\R^n$, it is maximally monotone \cite[Corollary 12.44]{RoW 98}.  Hence $Q$ has closed graph and is convex-valued, see., e.g., \cite[Exercise 12.8]{RoW 98}. Since $Q$ has closed graph with $(\bar{x}, 0) \in \gph Q$ and
    $\ker D^{*}Q(\bar{x}|0) = \{0\}$, it holds that $Q$ is metrically
    regular at $(\bar{x}, 0)$ by \Cref{thm:coderivativeCriterion}.     
Together with the monotonicity property,  \Cref{thm:monotonicityAndMetricRegularity} immediately gives
    that $Q$ is \emph{strongly metrically regular} there. 
\smallskip

\noindent
(b) We first prove that  that the function $h:=f(\bar p,\cdot)$ is a {\em strict estimator} of $f$ with respect to $x$ {\em uniformly} in $p$ at $(\bar p,\bar x)$ with constant $\mu=0$ in the sense of \cite[p.~47]{DoR 14} (cf. also \cite[p.~36]{DoR 14}): To this end, observe that with $e(p,x):=f(p,x)-h(x)$, we have
\begin{eqnarray*}
\lefteqn{\|e(p,x')-e(p,x)\|}\\
& =  &  \|f(p,x')-f(p,x) + f(\bar p,x)-f(\bar p,x')\|\\  
& \leq & \|f(p,x')-f(p,x)-D_x f(p,x)(x'-x)\|+\|f(\bar p,x')-f(\bar p,x)-D_x f(\bar p,x)(x'-x)\|\\
 & & + \|D_x f(\bar p,x)-D_x f(p,x)\|\cdot\|x-x'\|.
\end{eqnarray*}
Therefore, we find that 
 \[
 \lim_{\substack{x, x' \to \x \\ p \to \bar p}} \frac{e(p,x')-e(p,x)}{\|x-x'\|}=0.
 \]
By (a), the mapping $f(\bar p,\cdot)+F$ is strongly metrically regular at $(\bar x,0)$. By Theorem~\cite[Theorem~3G.4]{DoR 14}, the solution mapping $S$ has a Lipschitz continuous single-valued localization around $\bar p$  for $\bar x$. 
We now find that $S(p)=G(p,\cdot)^{-1}(0)$ is convex-valued for $p$ near $\bar p$, by maximal monotonicity of $G(p,\cdot)$ (see the arguments used for $Q$ above), see \cite[Exercise 12.8]{RoW 98}. By \Cref{lem:localizationAndConvexity}, $S$ is thus locally single-valued and Lipschitz  which proves (b).


\smallskip

\noindent
(c) Realize, by~\cite[Lemma 4]{FGH 22}, that $G$ is proto-differentiable at
$((\bar p, \bar x),0)$. Therefore, by \cite[Theorem 9.56 (c)]{RoW 98}, we get that $S$ is \emph{semidifferentiable}
(cf., \cite[Chapter 8H]{RoW 98}) at $\bar p$ and
that~\cref{eq:graphical_derivative} holds. The claim about single-valuedness and Lipschitzness of $DS(\bar p)$ also follows from~\cite[Theorem 9.56 (c)]{RoW 98} realizing that strong metric regularity from (a) implies the strict graphical derivative condition that is needed in said reference. The fact that the semiderivative is the directional derivative is due to local Lipschitzness of $S$ near $\bar p$. The formula for the estimate of the Lipschitz constant comes from \cite[Theorem 4B.2]{DoR 14}.

\smallskip

\noindent
(d) By $(a)$, $Q=f(\bar p,\cdot)+F$ is (strongly) metrically regular at $(\bar x,0)$, i.e. 
\[
0\in D^*Q(\bar x|0)(w)\quad\Longrightarrow\quad w=0.
\]
By  \Cref{lem:Sum} and the definition of the coderivative, this is equivalent to
\[
-(D_x f(\bar p,\bar x)^*w,w)\in N_{\gph F}(\bar x,-f(\bar p,\bar x))\quad\Longrightarrow\quad w=0.
\]
Hence, given $(v,w)\in N_{\gph F}(\bar x,-f(\bar p,\bar x))$ such that $v=(D_x f(\bar p,\bar x)^*w$, it follows that 
$w=0$, and hence $v=0$. Consequently, $D_pf(\bar p,\bar x)^*w=0$, and altogether the implication \eqref{eq:MordSemi} holds. Therefore, by \Cref{cor:SemiImplicit}, $S$ is semismooth* at $\bar p$.
 But by (b), $S$ is locally Lipschitz  at $\bar p$, hence \Cref{lem:Semi} gives the desired result. If $F$ is proto-differentiable at $(\bar x, -f(\bar p, \x))$, then $S$ is directionally differentiable at $\bar p$, so by \Cref{lem:Semi}, $S$ is semismooth at $\bar p$.

\end{proof}


\section{The kernel of the generalized Hessian and the role $\mathcal{C}^2$-cone reducibility}\label{sec:Kernel}

\noindent
We want to  use the implicit function theorem from \Cref{prop:Implicit} to study the solution map  \eqref{eq:S}.  This can be done by letting 
\begin{equation}\label{eq:Data}
p:=(A,b,\lambda), \quad f(p,x) := \frac{1}{\lambda} A^T(Ax - b), \quad F := \partial r,
\end{equation}
and realizing that now, by convexity, the optimal solution map in \eqref{eq:S}  can be written simply as 
\[
S(p)=\set{x\in \R^n}{ 0\in f(p,x)+F(x)}
\]
which is exactly the format in \Cref{prop:Implicit}.
The central condition to deploy this result is the Mordukhovich criterion from \eqref{eq:MordCrit} or, equivalently, \eqref{eq:MordCrit} which  in the case defined by \eqref{eq:Data}  reduces to 
\begin{align*}
    \ker A \cap D^* \partial r^* (-f(\bar{p}, \bar{x}))(0) = \{0\},
\end{align*} 
which, by \eqref{eq:SDInversion} and \cite[Eq. 8(19)]{RoW 98},  can be written as 
\begin{equation}\label{eq:MordData}
    \ker A \cap \ker \partial^2 r(\bar{x} | -f(\bar{p}, \bar{x})) = \{0\}.
\end{equation}

\noindent
It is therefore paramount to understand the kernel of the generalized Hessian of a closed, proper, convex function.  
This is where the notion of $\mathcal{C}^2$-cone reducibility \cite[Definition 3.135]{BS 00} in the following sense comes into play:

\begin{definition}[$\mathcal{C}^2$-cone reducible functions]
    A closed, convex set $\Theta \subseteq \R^m$ is said to be $\mathcal{C}^2$-cone reducible
    at $\bar{v}\in \Theta$ if there exists a neighborhood $V$ of $\bar{v}$, a pointed\footnote{A cone $K$ is called poined if $K\cap (-K)=\{0\}$.}, closed, convex cone $K \subseteq \R^{\ell}$
    and a $\mathcal{C}^2$ mapping $h : V \to \R^{\ell}$ such that $h(\bar{v}) = 0$, $D h(\bar{v})$ is surjective, and \begin{align*}
        \Theta \cap V = \{v \in V \ | \ h(v) \in K\}. 
    \end{align*} A closed, proper, convex function $f : \R^n \to \overline{\R}$ is called $\mathcal{C}^2$-cone reducible at $\bar{u} \in \operatorname{dom}f$
    if the set $\operatorname{epi} f$ is $\mathcal{C}^2$-cone reducible at $(\bar{u}, f(\bar{u}))$. We say $f$ is $\mathcal{C}^2$-cone reducible
    if it is $\mathcal{C}^2$-cone reducible at every point of its domain.
\end{definition}

Many important sets in optimization are $\mathcal{C}^2$-cone reducible such as convex polyhedral sets, the cone of positive semidefinite matrices, and the second-order cone   \cite{BS 00, Sh03,BR05}. Moreover,   convex piecewise-linear functions and many typical {\em spectral functions} are $\mathcal{C}^2$-cone reducible \cite{BS 00,CDZ17}; see also \cite[Remark~2]{CHNS 26} for several other classes of $\mathcal{C}^2$-cone reducible functions.

The central result of this section is obtaining the formula for the kernel of the generalized Hessian of {\em conjugate $\mathcal{C}^2$-cone reducible functions}, i.e., functions whose conjugates are $\mathcal C^2$-cone reducible. In order to do so, we need the landmark definition of tilt stability \cite{PR 98} introduced by Poliquin and Rockafellar and several related results.

\begin{definition}[Tilt stability]
 Given a function $\varphi : \R^n \to \overline{\R}$, we say that a point $\x \in \operatorname{dom} \varphi$
    is a \emph{tilt-stable minimizer} of $\varphi$ if there exists $\gamma > 0$ such that the mapping
    \begin{align*}
        M_{\gamma} : v \mapsto \argmin \{\varphi(x) -  \ip{v}{x} \ | \ x \in \mathbb{B}_{\gamma}(\x)\}
    \end{align*}
is single-valued and Lipschitz continuous on some neighborhood of $0\in \R^n$ with $M_\gamma(0)=\x$.
\end{definition}

\noindent
We recall \cite[Theorem 1.3]{PR 98}, with some modifications for the convex case.

\begin{theorem}[Characterization of tilt-stability via the generalized Hessian]\label{thm:tiltstab}
    Let $\varphi:\R^n \to \overline{\R}$ be a proper, lsc, convex function. Suppose $\bar{x}$ is a minimizer of $\varphi$ with $0 \in \partial \varphi (\bar{x})$.
    Then, the following are equivalent:
    \begin{itemize}
        \item[(i)] The point $\bar{x}$ is a tilt-stable minimizer of $\varphi$.
        \item[(ii)] The generalized Hessian $\partial^2 \varphi (\bar{x} | 0)$ is positive definite, in the sense that
        \begin{align}\label{eq:PHes}
          \ip{z}{x}  > 0 \text{ whenever } z \in \partial^2 \varphi(\bar{x} | 0)(w), \ w \not = 0.
        \end{align}
    \end{itemize}
\end{theorem}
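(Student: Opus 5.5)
We would reduce the statement to the original result of Poliquin and Rockafellar \cite[Theorem 1.3]{PR 98}. That result covers functions that are prox-regular and subdifferentially continuous at $\bar x$ for $0$, with $0\in\partial\varphi(\bar x)$. Under those hypotheses it shows that tilt stability is equivalent to positive definiteness of the generalized Hessian $\partial^2\varphi(\bar x|0)$, in exactly the sense of \eqref{eq:PHes}. So the first step is to check that a proper, lsc, convex $\varphi$ satisfies these standing assumptions at every point of $\gph\partial\varphi$.

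\textbf{Step 1: prox-regularity.} A proper, lsc, convex $\varphi$ is prox-regular at $\bar x$ for any $\bar v\in\partial\varphi(\bar x)$ with constant $r=0$. This is immediate from the subgradient inequality, which holds globally: $\varphi(x)\ge\varphi(x')+\ip{v}{x-x'}$ for all $(x',v)\in\gph\partial\varphi$ (see also \cite[Example 13.30]{RoW 98}).

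\textbf{Step 2: subdifferential continuity.} Subdifferential continuity at $\bar x$ for $\bar v$ requires that $\varphi(x^k)\to\varphi(\bar x)$ whenever $(x^k,v^k)\to(\bar x,\bar v)$ with $v^k\in\partial\varphi(x^k)$.
\begin{itemize}
\item[(i)] Lower semicontinuity gives $\liminf_k\varphi(x^k)\ge\varphi(\bar x)$.
\item[(ii)] The subgradient inequality at $x^k$, evaluated at $\bar x$, gives $\varphi(\bar x)\ge\varphi(x^k)+\ip{v^k}{\bar x-x^k}$. Since $v^k$ is bounded and $x^k\to\bar x$, the inner-product term tends to $0$, so $\limsup_k\varphi(x^k)\le\varphi(\bar x)$.
\end{itemize}
Together these give $\varphi(x^k)\to\varphi(\bar x)$, so \cite[Theorem 1.3]{PR 98} applies directly and yields the equivalence (i)$\Leftrightarrow$(ii).

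\textbf{Step 3: matching the two formulations.} Poliquin and Rockafellar phrase condition (ii) as
\[
\ip{z}{w}>0 \quad \text{whenever } z\in\partial^2\varphi(\bar x|0)(w),\ w\neq0 .
\]
The statement above writes $\ip{z}{x}$, which I read as a typo for $\ip{z}{w}$, with $w$ the direction. One should also confirm that the paper's convention $\partial^2\varphi:=D^*\partial\varphi$ agrees with theirs; it does, since both use the limiting coderivative.

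The same step must check the tilt-stability definition. In \cite{PR 98} the argmin is localized to a ball $\|x-\bar x\|\le\gamma$, exactly as in the paper's $M_\gamma$. Convexity would even make the localization unnecessary, because local minimizers are global, so this reconciliation is routine.

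\textbf{Main obstacle.} The only real content is verifying subdifferential continuity in Step 2, and that is the short argument given there. It is also worth noting, as a sanity check and as a possible self-contained alternative, that convexity gives $\ip{z}{w}\ge0$ automatically \cite[Theorem 2.1]{PR 98}. Hence (ii) amounts to $\ker\partial^2\varphi(\bar x|0)=\{0\}$, i.e. $0\in\partial^2\varphi(\bar x|0)(w)\Rightarrow w=0$. By \Cref{thm:coderivativeCriterion}, applied to the inverse map $(\partial\varphi)^{-1}=\partial\varphi^*$, this is Aubin continuity of $\partial\varphi^*$ at $0$ for $\bar x$. Since $\partial\varphi^*$ is maximally monotone, \Cref{thm:monotonicityAndMetricRegularity} upgrades this to a Lipschitz single-valued localization, and \Cref{lem:localizationAndConvexity} turns that into local single-valuedness. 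This is precisely tilt stability, because for convex $\varphi$ we have $M_\gamma(v)=\partial\varphi^*(v)$ near $0$. I would present the citation route and mention this alternative.
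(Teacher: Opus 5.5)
Your citation route is correct and is the paper's own: the paper gives no proof beyond recalling \cite[Theorem 1.3]{PR 98} ``with some modifications for the convex case.'' Your Steps 1--2 (prox-regularity with constant $0$, and subdifferential continuity, both from the global subgradient inequality plus lower semicontinuity) are exactly the verification the paper leaves implicit, and you are right that $\ip{z}{x}$ is a typo for $\ip{z}{w}$.

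Do not present the ``self-contained alternative'' as written, because one step is unjustified. The nonnegativity $\ip{z}{w}\ge 0$ does not by itself make (ii) equivalent to $\ker\partial^2\varphi(\bar x|0)=\{0\}$. The rotation $T(x_1,x_2)=(-x_2,x_1)$ shows this: it is maximally monotone with $\ker D^*T(\bar x|T\bar x)=\{0\}$, yet $\ip{z}{w}=0$ for every $z\in D^*T(\bar x|T\bar x)(w)$. So that route gives (ii)$\Rightarrow$(i), but from (i) it only reaches $\ker\partial^2\varphi(\bar x|0)=\{0\}$, not (ii).

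To close (i)$\Rightarrow$(ii) you must use the gradient structure. Under tilt stability, $\partial\varphi^*=\nabla\varphi^*$ is single-valued and Lipschitz near $0$. Now $z\in\partial^2\varphi(\bar x|0)(w)$ means $-w\in D^*\nabla\varphi^*(0|\bar x)(-z)=\partial\ip{-z}{\nabla\varphi^*}(0)$. This set is contained in the Clarke set $\{-Y^Tz : Y\in\partial_C\nabla\varphi^*(0)\}$, so $w=Yz$ for some such $Y$. Every such $Y$ is symmetric positive semidefinite, being a convex hull of limits of Hessians of the convex function $\varphi^*$. Hence $\ip{z}{w}=z^TYz>0$ whenever $w\neq 0$.

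Relatedly, ``$M_\gamma=\partial\varphi^*$ near $0$'' is not true for arbitrary convex $\varphi$. It does hold once $M_\gamma$ is single-valued, but only via the convex-valuedness argument of \Cref{lem:localizationAndConvexity}, not merely because local minimizers are global.
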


\noindent
A large portion of the following result is taken from \cite{CHNS 26}.

\begin{theorem}[Characterization of tilt stability for convex problems]\label{thm:tiltstab2}
    Let $f : \R^n \to \overline{\R}$ be a twice continuously differentiable
    convex function and let $g : \R^n \to \overline{\R}$ be a proper lsc convex function.
    Suppose that $\bar{x}\in \argmin f+g$. Then  $\bar{x}$ is a tilt-stable minimizer of $f + g$
    if and only if 
    \begin{align}\label{eq:Nes}
        \ker \nabla^2 f(\bar{x}) \cap \ker \partial^2 g(\bar{x} | \bar{v}) = \{0\} \text{ where } \bar{v} = -\nabla f(\bar{x}).
    \end{align} Moreover, if $\bar{x}$ is a tilt-stable minimizer of $f + g$, we have 
    \begin{align}\label{eq:Kpar}
        \ker \nabla^2 f(\bar{x}) \cap \operatorname{par} \partial g^*(\bar{v}) = \{0\}.
    \end{align}
    If, additionally, $g^*$ is $\mathcal{C}^2$-cone reducible at $\bar{v}$, then \eqref{eq:Kpar} is also sufficient for tilt stability of $f + g$ at $\bar{x}$.
\end{theorem}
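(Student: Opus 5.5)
Set $\varphi:=f+g$ and $\bar v:=-\nabla f(\bar x)$, so that $\bar v\in\partial g(\bar x)$ and $0\in\partial\varphi(\bar x)$. By the sum rule \Cref{lem:Sum}(b) applied to $\nabla f+\partial g$ (with $\nabla f$ of class $\mathcal C^1$, hence $D^*\nabla f(\bar x)=\nabla^2 f(\bar x)$ since the Hessian is symmetric), we get
\[
\partial^2\varphi(\bar x|0)(w)=\nabla^2 f(\bar x)w+\partial^2 g(\bar x|\bar v)(w).
\]
The plan is to reduce the first equivalence to \Cref{thm:tiltstab}. For the direction ``tilt stable $\Rightarrow$ \eqref{eq:Nes}'', take $w\in\ker\nabla^2 f(\bar x)\cap\ker\partial^2 g(\bar x|\bar v)$. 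Then $0\in\partial^2\varphi(\bar x|0)(w)$, so \eqref{eq:PHes} forces $w=0$. For the converse, take $z\in\partial^2\varphi(\bar x|0)(w)$ with $w\ne0$ and write $z=\nabla^2 f(\bar x)w+u$ with $u\in\partial^2 g(\bar x|\bar v)(w)$. Positive semidefiniteness of coderivatives of maximally monotone maps (\cite[Theorem 2.1]{PR 98}, as already used in the proof of \Cref{prop:Implicit}) gives $\ip{u}{w}\ge0$, and convexity of $f$ gives $\ip{\nabla^2 f(\bar x)w}{w}\ge0$. It therefore suffices to rule out $\ip{z}{w}=0$. In that case $w\in\ker\nabla^2 f(\bar x)$ and $\ip{u}{w}=0$. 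The delicate point is to upgrade $\ip{u}{w}=0$ to $0\in\partial^2 g(\bar x|\bar v)(w)$. I would avoid this by a different route: \eqref{eq:Nes} is precisely the Mordukhovich criterion \eqref{eq:KernelCQ} for $F=\partial g$ and $f(p,x)=\nabla f(x)-p$. \Cref{prop:Implicit}(b) then gives that $(\nabla f+\partial g)^{-1}$, which equals $(\partial\varphi)^{-1}$, is single-valued and Lipschitz near $0$. By convexity this map coincides with the tilt map $M_\gamma$, so $\bar x$ is tilt stable. This route also gives the forward direction directly, since tilt stability makes $(\partial\varphi)^{-1}$ a Lipschitz localization, i.e., strong metric regularity, and the Mordukhovich criterion together with the sum rule yields \eqref{eq:Nes}.

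For \eqref{eq:Kpar} under tilt stability, I would rely on \cite{CHNS 26} as the statement indicates. The idea is to show $\operatorname{par}\partial g^*(\bar v)\subseteq\ker\partial^2 g(\bar x|\bar v)$ in general and then combine this with \eqref{eq:Nes}. To prove the inclusion, pass to the conjugate via \eqref{eq:SDInversion}: $\gph\partial g$ is the flip of $\gph\partial g^*$, and $\partial g^*(\bar v)\times\{\bar v\}\subseteq\gph\partial g^*$. Normals to $\gph\partial g^*$ at $(\bar v,\bar x)$ therefore have second component orthogonal to $\partial g^*(\bar v)-\bar x$; more precisely, one uses that the regular normals along the face give $w\perp$ directions in the face. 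Equivalently, $w\in\operatorname{par}\partial g^*(\bar v)$ means $\bar x+tw\in\partial g^*(\bar v)$ for small $|t|$, when $\bar x$ lies in the relative interior. One then has to handle the general case, or pick a relative interior point and argue via limits, and show $(0,-w)$, or its flip, lies in $N_{\gph\partial g}(\bar x,\bar v)$. I expect this to be the main obstacle: showing that $(0,w)$ is a normal to $\gph\partial g$ for $w$ in the parallel subspace, presumably via an approximating sequence of regular normals at nearby relative interior points of the face $\partial g^*(\bar v)\times\{\bar v\}$.

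Finally, under $\mathcal C^2$-cone reducibility of $g^*$ at $\bar v$, the reverse inclusion $\ker\partial^2 g(\bar x|\bar v)\subseteq\operatorname{par}\partial g^*(\bar v)$ should hold. This is the key result from \cite{CHNS 26}, obtained through the reduction $h$ and the computation of normals to $\gph N_K$ composed with $h$, and I would cite it there. With equality of the two subspaces, \eqref{eq:Kpar} coincides with \eqref{eq:Nes}, and sufficiency follows from the first part.
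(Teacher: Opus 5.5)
Your treatment of the first equivalence is fine and is essentially the paper's. The paper also gets ``tilt stable $\Rightarrow$ \eqref{eq:Nes}'' from the sum rule and \eqref{eq:PHes}. For the converse it applies \Cref{prop:Implicit}(a) to get strong metric regularity of $\partial\varphi$ at $(\bar x,0)$ and cites \cite{AG08}; your use of $f(p,x)=\nabla f(x)-p$ together with identifying $(\partial\varphi)^{-1}$ with $M_\gamma$ just makes that last step explicit.

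The gaps are in the two assertions about $\para\partial g^*(\bar v)$. For the necessity of \eqref{eq:Kpar}, reducing to $\para\partial g^*(\bar v)\subseteq\ker\partial^2 g(\bar x|\bar v)$ is a legitimate strategy, since combined with \eqref{eq:Nes} it gives \eqref{eq:Kpar} at once. However, you stop exactly at the crux, and your heuristic points the wrong way. That normals along the face are orthogonal to it is a constraint on normals; it does not produce the specific normal $(0,-w)$. At $x^k\in\operatorname{ri}\partial g^*(\bar v)$ the face only gives you the tangent $(w,0)\in T_{\gph\partial g}(x^k,\bar v)$, i.e.\ $0\in D\partial g(x^k|\bar v)(w)$. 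Converting such a tangent into the normal $(0,-w)$ fails for general graphs: for $\gph$ of a linear map $M$ it amounts to $\ker M\subseteq\ker M^T$. So it needs the self-adjoint structure of subdifferential graphs. The paper does this (proof of \Cref{thm:c2conepargenhess}) only when $g^*$ is twice epi-differentiable, via the Rockafellar--Zagrodny inclusion $D\partial g(x^k|\bar v)\subseteq\partial^2 g(x^k|\bar v)$ and a limit $x^k\to\bar x$. In general it has only the weaker inclusion \eqref{eq:pargenhess}, which it \emph{derives from} the present theorem, and for the theorem itself it simply cites \cite[Theorem 4.3]{CHNS 26}.

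One way to close your step in full generality is the Minty parametrization $z\mapsto(P(z),z-P(z))$ of $\gph\partial g$, with $P=\operatorname{prox}_g$.
\begin{itemize}
\item At differentiability points of $P$ the regular normals are $\{(p,q)\mid Jp+(I-J)q=0\}$, where $J$ is symmetric with $0\preceq J\preceq I$.
\item Since $P(z^k+tw)=P(z^k)+tw$ for $z^k=x^k+\bar v$ and small $|t|$, we have $w\in\partial_C P(z^k)w$, which forces some $J\in\partial_B P(z^k)$ with $Jw=w$.
\item From nearby Jacobians one then builds regular normals converging to $(0,-w)$.
\end{itemize}
Without an argument of this kind, or the citation, this part is unproved.

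For sufficiency under $\mathcal{C}^2$-cone reducibility you need the reverse inclusion $\ker\partial^2 g(\bar x|\bar v)\subseteq\para\partial g^*(\bar v)$, which you attribute to \cite{CHNS 26}. In this paper that equality is \Cref{thm:c2conepargenhess}, presented as a new contribution. Its ``$\supseteq$'' part is obtained precisely from the sufficiency assertion you are trying to prove: take $f(x)=\frac12\|Ax-A\bar x\|^2-\bar v^Tx$ with $\ker A=\lin\{z\}$ for $z\in\ker\partial^2 g(\bar x|\bar v)$. So inside the paper your route is circular. The one-line sketch ``via the reduction $h$ and normals to $\gph N_K$'' does not supply this direction, which is the hard one. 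The paper describes \cite{CHNS 26} as working through strong regularity of a dual problem and bypassing second-order computations on the regularizer. That indicates what is available there is the tilt-stability characterization itself, \cite[Theorem 4.5]{CHNS 26}, which is what the paper cites. Either cite that directly, or give an independent proof of the reverse kernel inclusion.
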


\begin{proof}
    Define $\varphi := f + g$ and note that $0\in\partial \varphi(\x)$, as $\x$ is a minimizer of $\varphi$. By the second-order subdifferential sum rule  or \Cref{lem:Sum},  we have that 
    \begin{align*}
        \partial^2 \varphi(\bar{x} | 0)(w) = \nabla^2 f(\bar{x})w + \partial^2 g(\bar{x} | \bar v)(w) \ \text{ for all } w \in \R^n.
    \end{align*} 
    If  $\x$ is a tilt-stable minimizer of $\varphi$, we claim \eqref{eq:Nes}. Indeed, pick any $w \in \ker \nabla^2 f(\bar{x}) \cap \ker \partial^2 g(\bar{x} | \bar{v})$. The above equation leads us to
     $0\in \partial^2 \varphi (\bar{x} | 0)(w)$.
    By ~\Cref{thm:tiltstab}, particularly \eqref{eq:PHes}, $w = 0$. This verifies \eqref{eq:Nes}. On the other hand, if \eqref{eq:Nes}  holds, applying ~\Cref{prop:Implicit}(a) with $f(p,x)=\nabla f(x)$ and $F=\partial g(x)$ tells us that $\partial \varphi=\nabla f(x)+\partial g(x)$ is strongly metrically regular at $(\x,0)\in \gph \partial \varphi$. Since $\varphi$ is convex, this ensures $\x$ is a tilt-stable minimizer of $\varphi$; see e.g.,  \cite{AG08}.
    
    The necessary and sufficient condition for tilt stability in \eqref{eq:Kpar} follows from \cite[Theorem 4.3 and Theorem 4.5]{CHNS 26}, when the conjugate function $g^*$ is $\mathcal{C}^2$-cone reducible at $\bar v$.
\end{proof}

\noindent
 Part of the following lemma is from \cite[Proposition~3.1]{TW24}. 
\begin{lemma}\label{lem:protodiff}
    Suppose $h: \R^n \to \overline{\R}$ is $\mathcal{C}^2$-cone reducible at $\bar {u}\in {\rm dom}\, g$. Then, $h$ is twice epi-differentiable at $\bar u$, and $\partial h$ is proto-differentiable at  $\bar{u}$ for every $\bar v \in \partial h(\bar{u})$.
\end{lemma}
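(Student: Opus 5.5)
The plan is to reduce both claims to known results for $\mathcal{C}^2$-cone reducible functions and then use the Rockafellar--Wets link between twice epi-differentiability and proto-differentiability of the subdifferential. The first claim, twice epi-differentiability of $h$ at $\bar u$ for every $\bar v\in\partial h(\bar u)$, is the content of \cite[Proposition~3.1]{TW24}. If one wants to argue directly, one proceeds as follows.

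Write $\epi h\cap V=\{(u,\alpha)\in V \mid \Phi(u,\alpha)\in K\}$, where $\Phi$ is $\mathcal{C}^2$, $\Phi(\bar u,h(\bar u))=0$, $D\Phi(\bar u,h(\bar u))$ is surjective, and $K$ is a closed convex pointed cone. Surjectivity gives Robinson's constraint qualification, so the set $\epi h$ is second-order regular and parabolically derivable at $(\bar u,h(\bar u))$; see \cite[Section~3.4]{BS 00}. The second-order tangent set of the reduced form is computed via the chain rule: the curvature term of $K$ vanishes because $K$ is a cone. One then expresses $d^2h(\bar u\mid\bar v)$ as the quadratic form
\[
w\mapsto \ip{\mu}{\nabla^2\Phi(\bar u,h(\bar u))[(w,\beta),(w,\beta)]}
\]
restricted to the critical cone, where $\mu\in N_K(0)$ is the multiplier representing $(\bar v,-1)$. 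Uniqueness of $\mu$ follows from surjectivity of $D\Phi$. The quotients $\Delta^2_t h(\bar u\mid\bar v)$ epi-converge to this form: the liminf inequality comes from a second-order Taylor expansion of $\Phi$. The limsup inequality requires constructing recovery sequences $w_k\to w$, and this is where I expect the main obstacle. One obtains them by lifting feasible arcs through the surjective $D\Phi$, using a Lyusternik--Graves/metric-regularity argument for $\Phi(\cdot)\in K$. This gives $(u,\alpha)$-points in $\epi h$ of the form $\bar u+t_kw_k+o(t_k)$ with controlled $\alpha$.

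For the second claim, first observe that $h$ is closed, proper and convex. It is therefore prox-regular and subdifferentially continuous at $\bar u$ for every $\bar v\in\partial h(\bar u)$. Then invoke \cite[Theorem~13.40]{RoW 98}: for such functions, twice epi-differentiability at $\bar u$ for $\bar v$ is equivalent to proto-differentiability of $\partial h$ at $\bar u$ for $\bar v$. Moreover, $D(\partial h)(\bar u\mid\bar v)=\partial\big(\tfrac12 d^2h(\bar u\mid\bar v)\big)$. Combined with the first part, this yields proto-differentiability of $\partial h$ at $\bar u$ for every $\bar v\in\partial h(\bar u)$, as claimed.

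Beyond citing \cite{TW24} for the epi-differentiability, the only real work is to check that the hypotheses of \cite[Theorem~13.40]{RoW 98} hold. This is immediate in the convex setting.
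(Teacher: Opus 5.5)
Your proposal is correct and follows the paper's proof: twice epi-differentiability comes from \cite[Proposition~3.1]{TW24}, and convexity then yields proto-differentiability of $\partial h$ at $\bar u$ for every $\bar v\in\partial h(\bar u)$ through the equivalence of the two properties. The paper invokes the convex version of that equivalence, \cite[Theorem~2.2]{R 90}, where you use \cite[Theorem~13.40]{RoW 98}; for closed proper convex $h$ these amount to the same thing, and your optional direct sketch of the epi-differentiability is not needed.
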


\begin{proof}
The twice epi-differentibility of $h$ at $\bar u$ for $\bar v\in \partial h(\bar u)$ follows from \cite[Proposition~3.1]{TW24}. By \cite[Theorem 2.2]{R 90} and convexity of $h$, this is equivalent to proto-differentiability of $\partial h$.
\end{proof}

The following result from  sheds light onto the relationship between the subspace parallel to the subdifferential of the the conjugate of a convex function and the kernel of its generalized Hessian.


\begin{theorem}[The kernel of the generalized Hessian of convex functions]\label{thm:c2conepargenhess}
    Let $g:\R^n\to \R\cup\{\infty\}$ be a proper, lsc, convex function and $(\bar{x}, \bar{v}) \in \operatorname{gph} \partial g$.
    We have \begin{align}\label{eq:pargenhess}
        \para \partial g^* (\bar{v}) \subseteq \ker \partial^2 g(\bar{x} | \bar{v}) \cup (- \ker \partial^2 g(\bar{x} | \bar{v})).
    \end{align}
     If $g^*$ is twice epi-differentiable at $\bar{v}$, then we have 
    \begin{align}\label{eq:pargenhesstw}
        \para \partial g^* (\bar{v}) \subseteq \ker \partial^2 g(\bar{x} | \bar{v}).
    \end{align}
    Moreover, if $g^*$ is $\mathcal{C}^2$-cone reducible at $\bar{v}$, the above inclusion turns to equality
    \begin{align}\label{eq:pargenhessc2}
        \para \partial g^* (\bar{v}) = \ker \partial^2 g(\bar{x} | \bar{v}).
    \end{align} 
\end{theorem}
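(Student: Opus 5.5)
The plan is to move everything to the conjugate side. By \eqref{eq:SDInversion}, $\gph \partial g^*$ is $\gph\partial g$ with the coordinates swapped. The coderivative definition then gives
\[
z\in \partial^2 g(\bar x|\bar v)(w)\iff -w\in \partial^2 g^*(\bar v|\bar x)(-z).
\]
Hence $w\in\ker\partial^2 g(\bar x|\bar v)$, i.e.\ $0\in\partial^2 g(\bar x|\bar v)(w)$, is equivalent to $(0,-w)\in N_{\gph\partial g^*}(\bar v,\bar x)$, which is the same as $-w\in D^*\partial g^*(\bar v|\bar x)(0)$. So the three claims become statements about the zero-input coderivative of $\partial g^*$ at $(\bar v,\bar x)$, $\bar x\in\partial g^*(\bar v)$. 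Since $\ker \partial^2 g$ is a cone and $\para$ is a subspace, every statement may be checked up to sign.

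For \eqref{eq:pargenhess}, take $u\in\para\partial g^*(\bar v)$ and write $u=\sum_i \alpha_i(x_i-\bar x)$ with $x_i\in\partial g^*(\bar v)$. I expect the claim to hold in this generality only for the generating directions and their negatives, so I will first deal with $u=x_1-\bar x$. The segment $\{\bar v\}\times[\bar x,x_1]\subseteq\gph\partial g^*$ shows that $(0,\pm u)$ lies in the tangent cone at suitable points. More to the point, $(0,u)$ paired against the graph is tested by monotonicity: for $(v',x')\in\gph\partial g^*$ near $(\bar v,\bar x)$ one has $\ip{v'-\bar v}{x'-x_1}\ge0$ and $\ip{v'-\bar v}{x'-\bar x}\ge 0$. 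I plan to show that either $(0,u)$ or $(0,-u)$ is a regular normal at a nearby graph point, namely at an interior point $\bar x+t u$ of the segment approached as $t\downarrow0$ (or at $\bar x$ itself), using \cite[Theorem~2.1]{PR 98}-type positive semidefiniteness. Passing to the limit then places it in $N_{\gph\partial g^*}(\bar v,\bar x)$. The sign ambiguity is exactly why a union appears, and I will present the general $u$ through the relative interior: choose the base point in $\mathrm{ri}\,\partial g^*(\bar v)$, where every direction of $\para$ is realized by a segment.

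For \eqref{eq:pargenhesstw}, twice epi-differentiability of $g^*$ at $\bar v$ gives proto-differentiability of $\partial g^*$ (as in \Cref{lem:protodiff} via \cite{R 90}), with $D\partial g^*(\bar v|\bar x)=\partial(\tfrac12 d^2g^*(\bar v|\bar x))$, a maximally monotone, positively homogeneous map whose graph is a closed cone. By the segment argument, $D\partial g^*(\bar v|\bar x)(0)\supseteq T_{\partial g^*(\bar v)}(\bar x)\supseteq\pm$-generated pieces. The key step is to show that $(0,u)$ and $(0,-u)$ are both limiting normals. For this I will use that the normal cone to the graph contains the normal cone to the tangent cone at its points, and that $\gph D\partial g^*$ is (via a Minty parametrization) a Lipschitz manifold containing the subspace $\{0\}\times\para\partial g^*(\bar v)$ at points of relative interior. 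Normals are then symmetric along this lineality direction, which removes the sign issue.

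For the equality \eqref{eq:pargenhessc2}, I will use \Cref{thm:tiltstab2} with $f(x)=\tfrac12\ip{x}{Px}-\ip{\bar v+P\bar x}{x}$, where $P$ is the orthogonal projector onto $W:=(\ker\partial^2 g(\bar x|\bar v))$ complementary data. The plan is to argue by contradiction: suppose $w\in\ker\partial^2g(\bar x|\bar v)\setminus\para\partial g^*(\bar v)$ and let $P$ be the projector onto $(\para\partial g^*(\bar v))^\perp$. Then $\ker\nabla^2f=\para\partial g^*(\bar v)$ satisfies \eqref{eq:Kpar} only trivially, so \eqref{eq:Kpar} fails. Instead I take $P$ the projector onto $\para\partial g^*(\bar v)$, so that $\ker \nabla^2 f(\bar x)=(\para)^\perp$ and $(\para)^\perp\cap\para=\{0\}$, and $\nabla f(\bar x)=-\bar v$. $\mathcal C^2$-cone reducibility then gives tilt stability, hence \eqref{eq:Nes}: $(\para)^\perp\cap\ker\partial^2 g=\{0\}$. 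Combining this with the inclusion \eqref{eq:pargenhesstw} (available by \Cref{lem:protodiff}) and a dimension/cone decomposition of $\ker\partial^2 g$ gives equality. Decomposing $w=w_1+w_2$ with $w_1\in\para\subseteq\ker$ requires $\ker\partial^2g$ to be closed under such subtraction. This is the main obstacle, and I would try to establish it from the fact that $D^*\partial g^*(\bar v|\bar x)(0)$ is invariant under translation by $\para\partial g^*(\bar v)$ in this reducible setting, which again follows from the local manifold structure given by the reduction map $h$.
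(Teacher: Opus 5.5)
Your plan for the equality \eqref{eq:pargenhessc2} has a genuine gap. With $\ker\nabla^2 f(\bar x)=(\para\partial g^*(\bar v))^\perp$, \Cref{thm:tiltstab2} only gives $(\para\partial g^*(\bar v))^\perp\cap\ker\partial^2 g(\bar x|\bar v)=\{0\}$. To get $\ker\partial^2 g(\bar x|\bar v)\subseteq\para\partial g^*(\bar v)$ from this, you would need $w-w_1\in\ker\partial^2 g(\bar x|\bar v)$ whenever $w$ lies in the kernel and $w_1\in\para\partial g^*(\bar v)$. Nothing in your argument provides this. A priori $\ker\partial^2 g(\bar x|\bar v)$ is only a closed cone; in the PLQ example of \Cref{sec:PLQ} it is a union of two lines. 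Saying that ``translation invariance follows from the manifold structure of $h$'' is not an argument. The missing idea is to test with a one-dimensional kernel adapted to the vector you want to control. Given $z\in\ker\partial^2 g(\bar x|\bar v)\setminus\{0\}$, take $f(x)=\frac12\|A(x-\bar x)\|^2-\ip{\bar v}{x}$ with $\ker A=\lin\{z\}$. Then \eqref{eq:Nes} fails, so $\bar x$ is not a tilt-stable minimizer of $f+g$. By the sufficiency part of \Cref{thm:tiltstab2}, which is where $\mathcal C^2$-cone reducibility of $g^*$ enters, \eqref{eq:Kpar} must fail: $\lin\{z\}\cap\para\partial g^*(\bar v)\neq\{0\}$, i.e.\ $z\in\para\partial g^*(\bar v)$. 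This is the paper's argument, and it needs no decomposition. The same device with $\ker A=\lin\{w\}$ for $w\in\para\partial g^*(\bar v)$ is how the paper proves \eqref{eq:pargenhess}.

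Your plans for \eqref{eq:pargenhess} and \eqref{eq:pargenhesstw} contain a coordinate slip that propagates into both. In the $(v,x)$-coordinates of $\gph\partial g^*$, $w\in\ker\partial^2 g(\bar x|\bar v)$ iff $(-w,0)\in N_{\gph\partial g^*}(\bar v,\bar x)$, which matches your $-w\in D^*\partial g^*(\bar v|\bar x)(0)$; it is not $(0,-w)$. The vectors $(0,\pm u)$ you set out to prove are normals are tangent to the segment $\{\bar v\}\times[\bar x,x_1]$. They therefore cannot be regular normals at its relative interior points, so both plans fail as stated. The Minty/Lipschitz-manifold step is also never made precise.

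However, the monotonicity inequality you wrote does test the correct vector. For $x^\circ,x_1\in\partial g^*(\bar v)$ and $(v',x')\in\gph\partial g^*$, the inequality $\ip{v'-\bar v}{x'-x_1}\ge 0$ gives $\ip{(x_1-x^\circ,0)}{(v'-\bar v,x'-x^\circ)}\le\|v'-\bar v\|\,\|x'-x^\circ\|$. Hence $(x_1-x^\circ,0)\in\hat N_{\gph\partial g^*}(\bar v,x^\circ)$. Now take $x^\circ=x^k\in\mathrm{ri}\,\partial g^*(\bar v)$ with $x^k\to\bar x$, and $x_1=x^k\pm\varepsilon u$ for $u\in\para\partial g^*(\bar v)$. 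This gives $(\pm u,0)\in\hat N_{\gph\partial g^*}(\bar v,x^k)$, and passing to the limit yields $\pm u\in\ker\partial^2 g(\bar x|\bar v)$. Corrected this way, your route proves \eqref{eq:pargenhesstw} for every proper lsc convex $g$ by an elementary argument. It needs neither twice epi-differentiability nor the Rockafellar--Zagrodny inclusion the paper uses. It also shows that the sign ambiguity in \eqref{eq:pargenhess} comes from the tilt-stability proof and is not intrinsic. The reverse inclusion in \eqref{eq:pargenhessc2} still requires the fix described above.
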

\begin{proof}
 To justify \eqref{eq:pargenhess}, pick any $w \in \operatorname{par} \partial g^* (\bar{v}) \setminus \{0\}$
    and define $L := \operatorname{span}\{w\}$. Let $A$ be an $n\times n$ matrix with $\ker A = L$.
    Define $f(x) := \frac{1}{2} \lVert Ax - A \bar{x} \rVert^2 - \bar{v}^T x$.
    Note that $\nabla f(\bar{x}) = - \bar{v}$ and \begin{align*}
        \ker \nabla^2 f(\x) = \ker A^TA = \ker A = L.
     \end{align*} It follows that 
     \begin{align*}
        \ker \nabla^2 f(\x) \cap \operatorname{par} \partial g^* (\bar{v}) = L \not = \{0\}.
     \end{align*}
     By ~\Cref{thm:tiltstab2}, $\x$ is not a tilt-stable minimizer of $f + g$
    and \[
    \ker \nabla^2 f(\x) \cap \ker \partial^2 g(\x | \bar{v}) \not = \{0\}.
    \]
    Hence, there exists $\alpha \in \R \setminus \{0\}$ such that $\alpha w \in \ker \partial^2 g (\x | \bar{v})$, i.e., 
    $(0, - \alpha w) \in N_{\operatorname{gph} \partial g}(\bar{x}, \bar{v})$.
    As $N_{\operatorname{gph} \partial g}(\bar{x}, \bar{v})$ is a (not necessarily convex) cone and $\alpha \not = 0$,
    we obtain that \begin{align*}
        (0, w) \in N_{\operatorname{gph} \partial g}(\bar{x}, \bar{v}) \text{ or } (0, -w) \in N_{\operatorname{gph} \partial g}(\bar{x}, \bar{v}),
    \end{align*} 
    which implies that $w\in \ker \partial^2 g(\bar{x} | \bar{v}) \cup (- \ker \partial^2 g(\bar{x} | \bar{v}))$ for any $w\in\para \partial g^* (\bar{v}) \setminus \{0\}$. This verifies 
    the inclusion \eqref{eq:pargenhess}.

    To prove the inclusion ``$\subseteq$''  in \eqref{eq:pargenhesstw}, suppose further $g^*$ is  twice  epi-differentiable
    at $\bar{v}$.    By the convexity and closedness of $\partial g^* (\bar{v})$, we have that \cite[Theorem~6.3]{R 70}
    \begin{align*}
        \bar{x} \in \partial g^* (\bar{v}) = \operatorname{cl}(\operatorname{ri} \partial g^* (\bar{v})).
    \end{align*}
    Hence, there exists a sequence $\{x^k\} \subseteq \operatorname{ri} \partial g^*(\bar{v})$ such that $x^k \to \bar{x}$.
    We claim  that 
    \begin{align}\label{eq:coderivativeInclusion}
        T_{\partial g^* (\bar{v})}(x^k) = \operatorname{par} \partial g^* (\bar{v}) \subseteq \ker D \partial g( x^k | \bar{v}).
    \end{align} 
    The equality holds due to the fact that $x^k \in \operatorname{ri} \partial g^* (\bar{v})$ and \begin{align*}
        T_{\partial g^* (\bar{v})}(x^k) = \operatorname{cl}(\operatorname{cone}(\partial g^*(\bar{v}) - x^k)) = \operatorname{cl} (\operatorname{par} \partial g^* (\bar{v})) = \operatorname{par} \partial g^* (\bar{v}).
    \end{align*} To see the inclusion on the right-hand side of \eqref{eq:coderivativeInclusion},
    take $w \in T_{\partial g^* (\bar{v})}(x^k)$. There exist sequences $\{t_{k_n}\} \downarrow 0$ and \{$w^{k_n}\} \to w$
    such that $x^k + t_{k_n} w^{k_n} \in \partial g^* (\bar{v})$ for all $n$.
    It follows that \begin{align*}
       (x^k,\bar v)+t_{k_n}(w^{k_n},0)= (x^k + t_{k_n}w^{k_n}, \bar{v}) \in \operatorname{gph} \partial g,
    \end{align*} 
    which gives that $0 \in D \partial g (x^k | \bar{v})(w)$  or equivalently, $w \in \ker D \partial g^* (\bar{v} | x^k)$. Since $g^*$ is twice epi-differentiable at $\bar v$, $g$ is twice epi-differentiable at any $x_k$ for $\bar v$ by \cite[Theorem~13.21]{RoW 98}. The Rockafellar-Zagrodny derivative-coderivative inclusion (\cite[Theorem 13.57]{RoW 98}) tells us that
    \begin{align*}
        D \partial g (x^k | \bar{v}) \subseteq \partial^2 g(x^k | \bar{v}).
    \end{align*} This along with \eqref{eq:coderivativeInclusion} implies that, for all $k\in\mathbb N$, we have 
    \begin{align*}
        \operatorname{par} \partial g^* (\bar{v}) \subseteq \ker \partial^2 g(x^k | \bar{v})=\{w\in \R^n|\, (0,-w)\in N_{{\rm gph}\, \partial g}(x_k,\bar v)\}.
    \end{align*} 
    As the normal cone $N_{{\rm gph}\, \partial g}$ is a closed set-valued mapping,
    letting $x^k \to \bar{x}$ in the above inclusion gives us that  $\operatorname{par} \partial g^* (\bar{v}) \subseteq \ker \partial^2 g(\x| \bar{v})$,
    which shows the ``$\subseteq$" inclusion in \eqref{eq:pargenhesstw}.

    Next, let us suppose that $g^*$ is $\mathcal{C}^2$-cone reducible at $\bar{v}$.  By applying ~\Cref{lem:protodiff} for $h=g^*$, $g^*$ is twice epi-differentiable at $\bar{v}$. Hence, \eqref{eq:pargenhesstw} holds. We just need to justify the  inclusion ``$\supseteq$'' in \eqref{eq:pargenhessc2}. To do so, repeat some of the arguments from the beginning of the proof: 
    Take $z \in \ker \partial^2 g(\x | \bar{v}) \setminus \{0\}$ and define $L := \operatorname{span}\{z\}$.
    Let $A$ be an $n\times n$ matrix with $\ker A = L$. Define $f(x) := \frac{1}{2} \lVert Ax - A\bar{x} \rVert^2 -  \bar v^T x$.
    Again, we have $\nabla f(\x) = -\bar{v}$ and $\ker \nabla^2 f(\x) = L$. As $z \in L \cap \ker \partial^2 g(\x | \bar{v})$, $\x$ is
    not a tilt-stable minimizer of $f + g$. By ~\Cref{thm:tiltstab2}, $L \cap \operatorname{par} \partial g^*(\bar{v}) \not = \{0\}$. Thus, $z \in \operatorname{par} \partial g^* (\bar{v})$
    for every nonzero $z \in \ker \partial^2 g(\bar{x} | \bar{v})$.
\end{proof}

\noindent
The most important part for our study is the equality \eqref{eq:pargenhessc2} which furnishes a simple way of computing the generalized Hessian of a (closed, proper) convex function whose conjugate is $\mathcal{C}^2$-cone reducible and thus a tractable way of verifying the key condition  \eqref{eq:MordCrit} to deploy the implicit function theorem in \Cref{prop:Implicit} to study the solution map \eqref{eq:S}.

\section{Stability results for conjugate $\mathcal{C}^2$-cone reducible regularizers (and beyond)}\label{sec:Stability}

The following theorem extends the recent result \cite[Theorem~4]{CHNS 26}.

\begin{theorem}\label{prop:stabilityc2}
    Let $S : \R^{m \times n} \times  \R^m \times \R_{++} \rightrightarrows \R^n$ be the solution of problem \eqref{eq:generalLeastSquares} given by 
    \begin{align*}
        S(A, b, \lambda) := \underset{x \in \R^n}{\argmin} \left\{\frac{1}{2}\lVert Ax - b \rVert^2 + \lambda r(x)\right\}. 
    \end{align*}
Let $(\bar{A}, \bar{b}, \bar{\lambda}, \bar{x}) \in \operatorname{gph} S$ and
    $\bar{v} := -\frac{1}{\bar{\lambda}} \bar{A}^T(\bar{A} \bar{x} - \bar{b})$. Suppose that  $r^*$ is $\mathcal{C}^2$-cone reducible at $\bar v$. Then the following condition
    \begin{align}\label{eq:CQ}
        \ker \bar{A} \cap \para \partial r^* (\bar{v}) = \{0\}
    \end{align}
    is equivalent to any of the following conditions:
    \begin{itemize}
        \item[(a)] $S$ is single-valued at $(\bar A, \bar b, \bar \lambda)$.
        \item[(b)] $S$ is locally Lipschitz at $(\bar A, \bar b, \bar \lambda)$.
        \item[(c)] $S$ is directionally differentiable $(\bar A, \bar b, \bar \lambda)$.
    \end{itemize}
    Additionally, the following condition implies \eqref{eq:CQ}, and if
    $\partial r$ is semismooth* at $(\bar x, \bar v)$, then it is equivalent to \eqref{eq:CQ}:
    \begin{itemize}
        \item[(d)] $S$ is semismooth at $(\bar A, \bar b, \bar \lambda)$.
    \end{itemize}

\end{theorem}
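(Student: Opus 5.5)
The plan is to show that \eqref{eq:CQ} implies all of (a)--(d) (for (d) under the extra semismoothness* hypothesis), and then that each of (a)--(d) forces \eqref{eq:CQ}.

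\emph{Sufficiency.} Set $p=(A,b,\lambda)$, $f(p,x)=\frac1\lambda A^T(Ax-b)$ and $F=\partial r$ as in \eqref{eq:Data}. Then $F$ is maximally monotone and $f(p,\cdot)$ is affine and monotone for every $\lambda>0$. Moreover, $f$ is $\mathcal C^2$ near $\bar p$, since $\bar\lambda>0$. We have $D_xf(\bar p,\bar x)=\frac1{\bar\lambda}\bar A^T\bar A$, so $\ker D_xf(\bar p,\bar x)=\ker\bar A$.

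Because $r^*$ is $\mathcal C^2$-cone reducible at $\bar v$, \Cref{thm:c2conepargenhess} (applied with $g=r$) gives $\ker\partial^2 r(\bar x|\bar v)=\para\partial r^*(\bar v)$. Hence \eqref{eq:CQ} is exactly \eqref{eq:KernelCQ}. \Cref{prop:Implicit}(b) then yields that $S$ is single-valued and locally Lipschitz at $\bar p$, which proves (a) and (b).

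For (c), \Cref{lem:protodiff} applied to $h=r^*$ shows that $\partial r^*$ is proto-differentiable at $\bar v$ for $\bar x$. Since proto-differentiability is a property of the graph, and $\gph\partial r$ is obtained from $\gph\partial r^*$ by swapping coordinates (via \eqref{eq:SDInversion}), $\partial r$ is proto-differentiable at $(\bar x,\bar v)$. \Cref{prop:Implicit}(c) then gives directional differentiability. If in addition $\partial r$ is semismooth* at $(\bar x,\bar v)$, \Cref{prop:Implicit}(d) gives semismoothness, which is (d).

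\emph{Necessity.} Because (b), (c) and (d) each imply single-valuedness, which is how I read (a), it suffices to prove that failure of \eqref{eq:CQ} destroys single-valuedness. I read ``single-valued at $\bar p$'' as single-valued on a neighborhood of $\bar p$; otherwise the equivalence would be false (for instance, with $\ker\bar A$ large but a unique minimizer by luck).

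Suppose $0\neq w\in\ker\bar A\cap\para\partial r^*(\bar v)$. The primal solution set is
\[
S(\bar p)=\partial r^*(\bar v)\cap\{x:\bar Ax=\bar A\bar x\}=\partial r^*(\bar v)\cap(\bar x+\ker\bar A),
\]
because $\bar A x$ and the dual variable are the same for all solutions. If $\bar x$ lies in $\operatorname{ri}\partial r^*(\bar v)$, then $\bar x\pm tw\in S(\bar p)$ for small $t>0$, so $S(\bar p)$ is not a singleton. In general, I would perturb the data: pick $\hat x$ in $\operatorname{ri}\partial r^*(\bar v)$ close to $\bar x$ and choose $b$ near $\bar b$ so that $\hat x$ solves the problem with the same $\bar v$. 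This requires $\bar A^T(\bar A\hat x-b)=\bar A^T(\bar A\bar x-\bar b)$, i.e. $b=\bar b+\bar A(\hat x-\bar x)$. Then $S(\bar A,b,\bar\lambda)\ni\hat x\pm tw$, so $S$ is multivalued at points arbitrarily close to $\bar p$, contradicting (a), and therefore also (b), (c) and (d).

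The main obstacle is this necessity step, in particular pinning down the intended meaning of (a) and confirming that the perturbed $b$ keeps the dual point equal to $\bar v$. The latter holds because $\bar A^T\bar A(\hat x-\bar x)=\bar A^T\bar A(\hat x-\bar x)$ trivially cancels, so the optimality condition with $\bar v$ is preserved. An alternative route would go through tilt stability via \Cref{thm:tiltstab2}, but the direct construction seems cleaner.
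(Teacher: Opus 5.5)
Your proof is correct. The sufficiency half is the paper's argument, but the necessity half takes a different and more self-contained route.

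\textbf{Sufficiency.} The paper argues exactly as you do. One has $\ker D_xf(\bar p,\bar x)=\ker\bar A$, and \eqref{eq:pargenhessc2} turns \eqref{eq:CQ} into the Mordukhovich criterion. Proto-differentiability passes from $\partial r^*$ to $\partial r$ by inverting the graph, and \Cref{prop:Implicit}(b)--(d) finishes.

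\textbf{Necessity.} The paper only notes that (b)--(d) imply (a), then cites [CHNS 26, Theorem 4] for the equivalence of (a) with \eqref{eq:CQ}. You prove this equivalence directly. Strict convexity of $\frac12\|\cdot-b\|^2$ makes $\bar A x$, hence $\bar v$, constant on the solution set, so $S(\bar p)=\partial r^*(\bar v)\cap(\bar x+\ker\bar A)$. The shift $b=\bar b+\bar A(\hat x-\bar x)$, with $\hat x\in\operatorname{ri}\partial r^*(\bar v)$ near $\bar x$, keeps $\bar v$ fixed. It therefore places the segment $\hat x\pm tw$ inside $S(\bar A,b,\bar\lambda)$.

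\textbf{What each route buys.} The citation buys brevity. Your argument buys more:
\begin{itemize}
\item it does not depend on an external theorem;
\item it shows that \eqref{eq:CQ} is necessary for local single-valuedness for every closed proper convex $r$, so $\mathcal C^2$-cone reducibility of $r^*$ is needed only for sufficiency;
\item it perturbs only $b$, so it applies verbatim to the $(b,\lambda)$-parametrization of \Cref{cor:polyhedraldd}.
\end{itemize}

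\textbf{Reading of (a).} You are right to read (a) as single-valuedness on a neighborhood of $\bar p$; this makes explicit what the paper's wording leaves implicit. The pointwise reading genuinely fails. Take $r=\|\cdot\|_1$ on $\R^2$, $\bar A=[1\;\;1]$, $\bar b=1$, $\bar\lambda=1$, $\bar x=0$. Then $\bar v=(1,1)$, $\partial r^*(\bar v)=\R_+^2$, and $S(\bar p)=\{0\}$, yet $\ker\bar A\cap\para\partial r^*(\bar v)=\ker\bar A\neq\{0\}$.
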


\begin{proof}
    Apply \Cref{prop:Implicit} to $F = \partial r$, and $f$ defined by $f(p, x) = \frac{1}{\lambda} A^T(Ax - b)$, as was alluded to earlier.
 If condition \eqref{eq:CQ} is satisfied, by \eqref{eq:pargenhessc2} the Mordukhovich criterion \eqref{eq:MordCrit} that reads \begin{align*}
        \ker \bar{A} \cap \ker \partial^2 r(\bar{x} | \bar{v}) = \{0\}
    \end{align*} 
    holds. This gives that $S$ is (a) single-valued and (b) locally Lipschitz at $(\bar A, \bar b, \bar \lambda)$.
    By \Cref{lem:protodiff}, $\partial r^*$ is proto-differentiable at $\bar v$, so by \cite[Proposition 8.41]{RoW 98}, $\partial r$ is proto-differentiable at $\x$, therefore \Cref{prop:Implicit} also tells us that $S$ is (c) directionally differentiable at $(\bar A, \bar b, \bar \lambda)$.
    When $\partial r$ is semismooth* at $(\bar x, \bar v)$, \Cref{prop:Implicit} tells us that $S$ is semismooth* at $(\bar b, \bar \lambda)$. Thus, by \Cref{lem:Semi}, $S$ is semismooth at $(\bar b, \bar \lambda)$.

    On the other hand, (b), (c), and (d) all imply (a), which by
    \cite[Theorem 4]{CHNS 26}, is equivalent to \eqref{eq:CQ}.
\end{proof}


\begin{remark}[Conjugate $\mathcal{C}^2$-cone reducible functions]
    The class of proper, closed, convex functions with Fenchel conjugates which are $\mathcal{C}^2$-cone reducible includes polyhedral convex functions and support functions of $\mathcal{C}^2$-cone reducible sets (which encompasses the $\ell_1/\ell_2$ norm and the nuclear norm). See \cite[Remark 2]{CHNS 26} for a more comprehensive list.
\end{remark}

\subsection{Regularizers that are weighted polyhedral support functions}

\noindent
\Cref{prop:stabilityc2} is a solely qualitative statement. In order to obtain quantitative results regarding the stability of the solution map $S$, we need to impose more structure on the regularizer $r$ to arrive at computable expressions. In fact, we will first study the special case where the regularizer $r$ has the form  $r=\sigma_\mathcal P\circ M$, i.e., a support function of a polyhedral set composed with a linear map. This encapsulates, in particualar, all (weighted) polyhedral norms,  and therefore  stability results for the (weighted)  LASSO \cite{BBH 22, GKM 18}.  Functions of this form are polyhedral and are known to be $\mathcal{C}^2$-cone-reducible (\cite[Remark 3.6(a)]{CHNS 26}


 We start our study of this special case by a simple conjugacy result which appeals, in particular, to the fact that the support and indicator of a polyehdral set are polydreal convex functions.

\begin{lemma}[Conjugate of $\sigma_\pol\circ M$]\label{lem:rsubdiff}
    Consider  $r = \sigma_{\pol} \circ M$ where $\pol\subseteq \R^{\ell}$ is a polyhedron and $M \in \R^{\ell \times n}$. 
 Let 
$(\x, \bar v) \in \gph \partial r$ and set 
\begin{equation}\label{eq:Lambda}
  \Lambda(\x, \bar v) := \{y\in \R^{\ell} \ | \ M^Ty = \bar v , \ y \in N_{\pol}^{-1}(M\x)\}.  
\end{equation}
Then
\begin{align*}
    \partial r^* (\bar v) = M^{-1} (N_{\pol}(\bar y)), \ \forall \bar y \in \Lambda (\x, \bar v).
\end{align*} 
In particular, we have
\begin{align}\label{eq:rpara}
\para \partial r^*(\bar v) = \operatorname{span} M^{-1}( N_{\pol}(\bar y)) , \ \forall \bar y \in \Lambda(\x, \bar v).
\end{align}
\end{lemma}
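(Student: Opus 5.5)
The plan is to compute $r^*$ explicitly, identify $\partial r^*(\bar v)$ as the set of $x$ with $\bar v\in\partial r(x)$, and then use the fixed multiplier $\bar y$ to describe that set polyhedrally. Since $\pol$ is polyhedral, $\sigma_\pol$ is polyhedral convex, so $r=\sigma_\pol\circ M$ is polyhedral convex. Hence the subdifferential chain rule holds without any qualification condition:
\[
\partial r(x)=M^T\partial\sigma_\pol(Mx)=M^T N_\pol^{-1}(Mx),
\]
using $\partial\sigma_\pol=(\partial\delta_\pol)^{-1}=N_\pol^{-1}$ from \eqref{eq:SDInversion} and $\sigma_\pol^*=\delta_\pol$ (note $\pol$ is closed and convex). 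In particular $\bar v\in\partial r(\x)$ gives $\Lambda(\x,\bar v)\neq\emptyset$. By \eqref{eq:SDInversion} again,
\[
\partial r^*(\bar v)=\set{x}{\bar v\in M^TN_\pol^{-1}(Mx)}=\set{x}{\exists y\in\pol:\ M^Ty=\bar v,\ Mx\in N_\pol(y)}.
\]

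Next, fix $\bar y\in\Lambda(\x,\bar v)$. The inclusion $M^{-1}(N_\pol(\bar y))\subseteq\partial r^*(\bar v)$ is immediate: take $y=\bar y$, noting $\bar y\in\pol$ because $N_\pol^{-1}(M\x)\subseteq\pol$.

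For the reverse inclusion, let $x\in\partial r^*(\bar v)$ with a witness $y\in\pol$, $M^Ty=\bar v$, $Mx\in N_\pol(y)$. We want $Mx\in N_\pol(\bar y)$, i.e.\ $\ip{Mx}{z-\bar y}\le 0$ for all $z\in\pol$. First, $Mx\in N_\pol(y)$ means $\sigma_\pol(Mx)=\ip{Mx}{y}=\ip{x}{M^Ty}=\ip{x}{\bar v}$. Second, $\bar y\in\pol$ gives $\sigma_\pol(Mx)\ge\ip{Mx}{\bar y}=\ip{x}{M^T\bar y}=\ip{x}{\bar v}$. Combining the two, $\ip{Mx}{\bar y}=\sigma_\pol(Mx)$, so $\bar y$ attains the support function in direction $Mx$, which is exactly $Mx\in N_\pol(\bar y)$. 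This shows that every multiplier in $\Lambda$ works equally well, and that the set is independent of the choice of $\bar y$.

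Finally, \eqref{eq:rpara} follows from the definition $\para C=\lin(C-x_0)$. Because $C:=M^{-1}(N_\pol(\bar y))$ is a convex cone containing $0$, we may take $x_0=0$, which gives $\para C=\lin C$.

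The only genuinely delicate point is the chain rule without a constraint qualification, which rests on polyhedrality (Rockafellar, Theorem 23.9, or the polyhedral chain rule in \cite{RoW 98}). The independence of $\bar y$ is handled by the short support-function argument above.
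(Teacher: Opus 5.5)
Your proof is correct and follows the same route as the paper: the polyhedral chain rule $\partial r(x)=M^TN_\pol^{-1}(Mx)$, the inversion $\partial r^*=(\partial r)^{-1}$, and the observation that $0\in M^{-1}(N_\pol(\bar y))$ for the span formula. Your support-function argument ($\ip{Mx}{\bar y}=\ip{x}{\bar v}=\sigma_\pol(Mx)$), which shows that every multiplier $\bar y\in\Lambda(\x,\bar v)$ gives the same set, fills in the independence step that the paper simply asserts when passing from its chain of equivalences to $\partial r^*(\bar v)=M^{-1}(N_\pol(\bar y))$.
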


\begin{proof}
    Using the relations $\partial r^* = (\partial r)^{-1}$  and $\partial \sigma_\pol=N_\pol^{-1}$ as well as the subdifferential chain rule \cite[Theorem 23.9]{R 70} with the fact that $\sigma_\pol$ is polyhedral, we find: 
    \begin{align*}
        u \in \partial r^*(\bar v) \iff \bar v \in M^{T} N_{\pol}^{-1}(Mu) \iff \exists y \in N_{\pol}^{-1}(Mu):\; v = M^Ty
    \end{align*} So, if $\bar y \in \Lambda(\x, \bar v)$, then \begin{align*}
        \partial r^*(\bar v) = M^{-1}(N_{\pol}(\bar y)).
    \end{align*} 
    And because $0 \in \partial r^*(\bar v)$, we have
    \begin{align*}
        \para \partial r^*(\bar v) = \lin M^{-1}(N_{\pol}(\bar y))\quad\mbox{for any}\quad \bar y\in \Lambda(\bar x,\bar v).
    \end{align*}

\end{proof}

\noindent
The next example shows that, in general, the $\lin$ cannot be moved inside in \eqref{eq:rpara}, and at the same time, that the subspace parallel to the subdifferential of the conjugate is readily available.

\begin{example}
    Let $M = \begin{pmatrix}
        1 & 1 \\ 1 & 1
    \end{pmatrix}$, $\pol = [0, 1]^2$, $\x = 0$, $\bar v = \begin{pmatrix}
        1 \\ 1
    \end{pmatrix}$, and $\bar y = \begin{pmatrix}
        1 \\ 0
    \end{pmatrix}$.
    First, we verify that $\bar y \in \Lambda (\bar x, \bar v)$.
    $M^T \bar y = \begin{pmatrix}
        1 \\ 1
    \end{pmatrix} = \bar v$, and $M \x = 0$, so $N_{\pol}^{-1}(M \x) = \R^2 \ni \bar y$. Now, we compute $\para \partial r^*(\bar v)$: $N_{\pol}(\bar y) = N_{[0,1]}(1) \times N_{[0, 1]}(0) = \R_{+} \times \R_{-}$, by \cite[Theorem 6.10]{RoW 98}.
    Thus, \begin{align*}
        M^{-1}N_{\pol}(\bar{y}) = \left\{z \in \R^2 \ | \ \begin{pmatrix}
        z_1 + z_2 \\
        z_1 + z_2
    \end{pmatrix} \in \R_{+} \times \R_{-}\right\} = \{z \ | \ z_1 + z_2 = 0\} = \lin \left\{\begin{pmatrix}
        1 \\ -1
    \end{pmatrix}\right\}.
    \end{align*} This tells us that $\lin M^{-1} N_{\pol} (\bar y) = \lin \left\{\begin{pmatrix}
        1 \\ -1
    \end{pmatrix}\right\}$. However, if we move the $\lin$ inside, we obtain \begin{align*}
        \lin N_{\pol}(\bar y) = \R^2 \implies M^{-1}(\lin N_{\pol}(\bar y)) = \R^2 \neq \lin \left\{\begin{pmatrix}
        1 \\ -1
    \end{pmatrix}\right\}.
    \end{align*}
    \hfill $\diamond$
\end{example}

\noindent
 The next result is a technical lemma needed for the upcoming stability result.

\begin{lemma}\label{lem:Lambda}  Let $\{(x_k,v_k)\in  \operatorname{gph} \partial r\}\to (\bar x,\bar v)$, and let $\bar y \in \Lambda(\bar x,\bar v)$ in \eqref{eq:Lambda}. Then there exists $\{y_k\in \Lambda(x_k,v_k)\}\to\bar y$.
\end{lemma}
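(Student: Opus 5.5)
The plan is to exploit the polyhedral structure of $\sigma_\pol$ together with the finiteness of the face lattice of $\pol$. Write $\pol=\set{y}{a_i^Ty\le b_i,\ i=1,\dots,q}$. For any $u$, the set $N_\pol^{-1}(u)=\partial\sigma_\pol(u)$ is the face $\argmax_{y\in\pol}\ip{u}{y}$ of $\pol$. Hence
\[
\Lambda(x,v)=\set{y\in F(Mx)}{M^Ty=v},\qquad F(u):=\argmax_{y\in\pol}\ip{u}{y}.
\]
By the chain rule already used in \Cref{lem:rsubdiff}, each $\Lambda(x_k,v_k)$ is nonempty. Since $\pol$ has only finitely many faces, it suffices to prove the claim along an arbitrary subsequence on which the face $F:=F(Mx_k)$ is constant. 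Indeed, if every subsequence has a further subsequence admitting $y_k\in\Lambda(x_k,v_k)$ with $y_k\to\bar y$, then choosing $y_k$ as a nearest point of the closed set $\Lambda(x_k,v_k)$ to $\bar y$ gives $d(\bar y,\Lambda(x_k,v_k))\to 0$ for the whole sequence.

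So fix such a subsequence with constant face $F=\set{y\in\pol}{a_i^Ty=b_i,\ i\in I}$. Outer semicontinuity of $N_\pol^{-1}$ gives $F\subseteq F(M\bar x)$. Moreover $Mx_k\in N_\pol(F)=\cone\{a_i\}_{i\in I}$, so I can write $Mx_k=\sum_{i\in I}\mu_i^k a_i$ with $\mu^k\ge0$. The key identity comes from evaluating $\ip{Mx_k}{\cdot}$ at $\bar y$ and at some $z_k\in\Lambda(x_k,v_k)$:
\[
\sum_{i\in I}\mu_i^k\,(b_i-a_i^T\bar y)=\ip{Mx_k}{z_k-\bar y}=\ip{x_k}{v_k-\bar v}\longrightarrow 0 .
\]
Every term on the left is nonnegative, so each term tends to $0$.

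Next I would apply Hoffman's error bound to the fixed polyhedral system
\[
\Sigma(w):=\set{y}{a_i^Ty\le b_i\ (i\notin I),\ a_i^Ty=b_i\ (i\in I),\ M^Ty=w},
\]
whose constant $c$ does not depend on $k$. This gives
\[
d\big(\bar y,\Lambda(x_k,v_k)\big)\le c\Big(\sum_{i\in I}|b_i-a_i^T\bar y|+\|\bar v-v_k\|\Big).
\]
Here I use that $\bar y\in\pol$ and $M^T\bar y=\bar v$, and that $\Sigma(v_k)=\Lambda(x_k,v_k)$ for $k$ in the subsequence. The conclusion therefore reduces to showing $a_i^T\bar y=b_i$ for all $i\in I$, i.e.\ $\bar y\in F$, possibly after replacing $I$ by a smaller index set.

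This reduction is the main obstacle. The identity above only controls $\mu_i^k(b_i-a_i^T\bar y)$, so constraints with $\mu_i^k\to 0$ escape control. To handle this, I would refine the subsequence further so that the index set $J:=\set{i\in I}{\limsup_k\mu_i^k>0}$ is well defined and $\mu_i^k$ is bounded away from $0$ on $J$. Then $a_i^T\bar y=b_i$ for $i\in J$.

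For the indices in $I\setminus J$, the plan is to perturb $x_k$ and $z_k$. I would drop the vanishing multipliers, setting $\tilde x_k$ with $M\tilde x_k=\sum_{i\in J}\mu_i^ka_i$, and use the face $F_J\supseteq F$ together with a second Hoffman bound to move $z_k$ within $O(\max_{i\notin J}\mu_i^k)$ into $F$. If this perturbation cannot be realized inside the range of $M$, I would instead run a contradiction argument: suppose $d(\bar y,\Lambda(x_k,v_k))\ge\varepsilon$ along the subsequence, take a cluster point of the nearest points, and use the strict monotonicity of the complementarity terms to derive a contradiction. I expect the precise handling of these vanishing multipliers to be where the real work lies.
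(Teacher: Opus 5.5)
There is a genuine gap, and it sits exactly at the step you flag as the main obstacle. You never show $\bar y\in F$ (i.e.\ $a_i^T\bar y=b_i$ for all $i\in I$). Without it your Hoffman estimate does not close, because on the subsequence the residual $\sum_{i\in I}|b_i-a_i^T\bar y|$ is a fixed number independent of $k$. Your multiplier identity only yields $\mu_i^k(b_i-a_i^T\bar y)\to0$, which says nothing about indices with vanishing multipliers (and the $\mu^k$ are not even unique). The two fallbacks, dropping small multipliers and moving $z_k$ into $F$, or a contradiction argument via ``strict monotonicity of the complementarity terms'', are only announced, not carried out. As written, the proposal is a reduction, not a proof.

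The missing idea is that the face constraint in $\Lambda$ is redundant on $\gph\partial r$. Let $(x,v)\in\gph\partial r$ and $z\in\Lambda(x,v)$ (nonempty by the chain rule). Then every $y\in\pol$ with $M^Ty=v$ satisfies $\ip{Mx}{y}=\ip{x}{v}=\ip{Mx}{z}=\sigma_\pol(Mx)$, hence $y\in N_\pol^{-1}(Mx)$. Thus $\Lambda(x,v)=\pol\cap(M^T)^{-1}(v)$ on $\gph\partial r$. Hoffman's bound for the single fixed system $a_i^Ty\le b_i$ $(i=1,\dots,q)$, $M^Ty=w$ then gives directly $d(\bar y,\Lambda(x_k,v_k))\le c\|v_k-\bar v\|\to0$, with no subsequences or face bookkeeping.

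Within your own framework the same observation closes the gap. On a subsequence with constant face $F$, $\partial r(x_k)=M^TF$ is a fixed closed set containing $v_k\to\bar v$, so $\bar v=M^Ty'$ for some $y'\in F$. Hence $\ip{Mx_k}{\bar y}=\ip{x_k}{\bar v}=\ip{Mx_k}{y'}=\sigma_\pol(Mx_k)$. Since $F$ is the whole argmax of $\ip{Mx_k}{\cdot}$ over $\pol$, this gives $\bar y\in F$ without ever looking at individual multipliers.

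For comparison, the paper gets $d(\bar y,\Lambda(x_k,v_k))\le\kappa\|(x_k,v_k)-(\bar x,\bar v)\|$ in one line by asserting that $\Lambda^{-1}$ has polyhedral convex graph and invoking its metric regularity. Strictly, $\gph\Lambda$ is only a finite union of polyhedra. It is precisely the redundancy above, namely that on $\gph\partial r$ the map $\Lambda$ coincides with a map of polyhedral convex graph, that makes such an estimate legitimate.
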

 \begin{proof}
 Observe that $\gph \Lambda$ is given by \begin{align*}
     \{(x, v, y) \ | \ M^Ty = v, \, Mx \in N_{\pol}(y)\}.
 \end{align*} Because $N_{\pol}$ is polyhedral, we have that $\gph \Lambda$ is polyhedral, and therefore the set-valued map $\Lambda^{-1}$ has a graph which is a polyhedral convex set.
 For $k \in \mathbb N$, define $y_k := P_{\Lambda(x_k, v_k)}(\bar y)$, the projection of $\bar y$ onto $\Lambda (x_k, v_k)$. By \cite[Example 9.47]{RoW 98}, the set-valued map $\Lambda^{-1} : \R^{\ell} \rightrightarrows \R^{2n}$ is globally metrically regular, so there exists $\kappa > 0$, which does not depend on $k$, such that we have
 \begin{align*}
 \lVert y_k - \bar y \rVert = d(\bar y, \Lambda (x_k, v_k))
 \le \kappa d((x_k, v_k), \Lambda^{-1}(\bar y)) \le \kappa \left\lVert 
 \begin{pmatrix}
     x_k \\
     v_k
 \end{pmatrix} - \begin{pmatrix}
     \bar x \\
     \bar v
 \end{pmatrix}\right\rVert \to 0
 \end{align*} so $y_k \to \bar y$.
\end{proof}

\begin{corollary}\label{cor:polyhedraldd}
    Let $S : \R^m \times \R_{++} \rightrightarrows \R^n$ be given by
    \begin{align*}
        S(b, \lambda) = \underset{x \in \R^n}{\argmin} \left\{ \frac{1}{2} \lVert Ax - b \rVert^2 + \lambda r(x) \right\},
    \end{align*} where $r = \sigma_{\pol} \circ M$ for $\pol \subseteq \R^{\ell}$ polyhedral and $M \in \R^{\ell \times n}$.
    Let $\Lambda(\x, \bar v)$ be defined as in \Cref{lem:rsubdiff}.
    If for any $\bar y \in \Lambda(\x, \bar v)$, we have
    \begin{align}\label{eq:MordCritpolyhedral}
        \ker A \cap \operatorname{span} M^{-1} (N_{\pol}(\bar{y})) = \{0\}
    \end{align} then it holds that
    \begin{itemize}
        \item[(a)] $S$ is semismooth (in particular, single-valued, locally Lipschitz, and directionally differentiable) at $(\bar{b}, \bar{\lambda})$, and for each direction $(q, \alpha)$, there exists a matrix $Q$ with orthonormal columns such that the directional derivative in direction $(q, \alpha)$ is given by 
        \begin{align*}
            S'(\bar{b}, \bar{\lambda} ; q, \alpha) = Q[(AQ)^T(AQ)]^{-1} (AQ)^T \left(q + \frac{\alpha}{\bar{\lambda}}(A \x - \bar{b})\right).
        \end{align*}
        \item[(b)] For any $\bar{y} \in \Lambda(\x, \bar v)$, the modulus of Lipschitz continuity of $S$ can be bounded by
        \begin{align*}
            L \le \frac{1}{\sigma_{\min}(A \overline{Q})^2}
            \left( \sigma_{\max}(A \overline{Q}) + \frac{1}{\bar{\lambda}} \lVert (A\overline{Q})^T(A \x - \bar{b}) \rVert \right),
        \end{align*} where $\overline{Q}$ is a matrix with columns which
        form an orthonormal basis of $\operatorname{span} M^{-1} N_{\pol}(\bar{y})$.
    \end{itemize}
\end{corollary}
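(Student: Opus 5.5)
We plan to derive this from \Cref{prop:Implicit} with $p=(b,\lambda)$, $f(p,x)=\frac{1}{\lambda}A^T(Ax-b)$ and $F=\partial r$. Here $r=\sigma_\pol\circ M$ is polyhedral convex, so $r^*$ is polyhedral and hence $\mathcal C^2$-cone reducible. By \Cref{lem:rsubdiff} and \eqref{eq:pargenhessc2}, condition \eqref{eq:MordCritpolyhedral} is exactly $\ker A\cap\ker\partial^2 r(\bar x|\bar v)=\{0\}$, i.e. \eqref{eq:KernelCQ}. The graph of $\partial r$ is a finite union of polyhedra, so $\partial r$ is semismooth* (polyhedral sets are semismooth*). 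By \Cref{lem:protodiff}, together with \cite[Proposition 8.41]{RoW 98} as in the proof of \Cref{prop:stabilityc2}, $\partial r$ is proto-differentiable. \Cref{prop:Implicit}(b)--(d) then gives semismoothness, and with it single-valuedness, local Lipschitzness and directional differentiability.

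For the formula in (a), we start from \eqref{eq:graphical_derivative}. \Cref{lem:Sum}(a) gives
\[
DG(\bar p,\bar x|0)(q,\alpha,w)=\tfrac{1}{\bar\lambda}A^TAw-\tfrac{1}{\bar\lambda}A^Tq-\tfrac{\alpha}{\bar\lambda^2}A^T(A\bar x-\bar b)+D\partial r(\bar x|\bar v)(w).
\]
By polyhedrality, $\gph\partial r$ is locally around $(\bar x,\bar v)$ a finite union of polyhedral faces. Hence $D\partial r(\bar x|\bar v)$ has a graph that is a union of polyhedral cones, and each piece has the form $\{(w,z): w\in K_i,\ z\in K_i^\circ+\ldots\}$, a piece of the normal-cone map of a critical cone. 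The unique solution $w=S'(\bar p;q,\alpha)$ lies in one piece, and on that piece it is the minimizer of the strongly convex quadratic $\frac12\|Aw-(q+\frac{\alpha}{\bar\lambda}(A\bar x-\bar b))\|^2$ over a face-type set whose active part is a subspace $W$. We take $Q$ with orthonormal columns spanning $W$. The first-order condition restricted to $W$ gives $Q^TA^TAQ\,u=Q^TA^T(q+\frac{\alpha}{\bar\lambda}(A\bar x-\bar b))$, and this yields the stated formula with $w=Qu$. For $(AQ)^TAQ$ to be invertible we need $\ker A\cap W=\{0\}$, so we must show $W\subseteq\lin M^{-1}N_\pol(y)$ for a suitable $y\in\Lambda(\bar x,\bar v)$. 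This identification of the pieces of $D\partial r$ via the critical cone of $\sigma_\pol\circ M$ is the main obstacle. We would attack it with the chain rule for polyhedral functions: the derivative of $\partial r$ is governed by $M^T N_{K}(\cdot)M$, where $K$ is a critical cone of $\pol$ at $\bar y$, and the directions $w$ allowed live in $M^{-1}$ of faces of $N_\pol(\bar y)$.

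For (b) we use the bound $L\le\limsup_{p\to\bar p}\max_{\|(q,\alpha)\|\le1}\|DS(p)(q,\alpha)\|$ from \Cref{prop:Implicit}(c). At nearby $p$ with solution $x$ and $v$, the derivative has the same form with some $Q$ whose range lies in $\lin M^{-1}N_\pol(y)$, where $y\in\Lambda(x,v)$. By \Cref{lem:Lambda}, for any fixed $\bar y$ we can choose $y_k\to\bar y$, and upper semicontinuity of faces of the polyhedron then gives $N_\pol(y_k)\subseteq N_\pol(\bar y)$ for $k$ large. Thus every relevant $Q$ has range inside $\rge\overline Q$, and we can write $Q=\overline Q R$ with $R$ having orthonormal columns. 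The operator norm then satisfies
\[
\|Q[(AQ)^TAQ]^{-1}(AQ)^T\|\le\sigma_{\min}(A\overline QR)^{-2}\sigma_{\max}(A\overline Q R)\le\sigma_{\min}(A\overline Q)^{-2}\sigma_{\max}(A\overline Q),
\]
by interlacing. The term in $\alpha$ is bounded by $\frac1\lambda\|(AQ)^T(Ax-b)\|$, which we estimate by $\|(A\overline Q)^T(Ax-b)\|$ and pass to the limit as $p\to\bar p$, $x\to\bar x$. Combining the two bounds gives (b).
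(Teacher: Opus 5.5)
Your qualitative argument for (a) and your estimate in (b) are fine; writing $Q=\overline{Q}R$ and using interlacing is even a bit cleaner than the paper's subsequence argument. The gap is the directional-derivative formula in (a), and (b) depends on it at nearby points. What must be shown is the following. Set $c:=q+\frac{\alpha}{\bar\lambda}(A\x-\bar b)$. The unique $w$ with $\frac{1}{\bar\lambda}A^T(c-Aw)\in D\partial r(\x|\bar v)(w)$ must lie in a subspace $W\subseteq\lin M^{-1}N_\pol(\bar y)$ with $A^T(c-Aw)\perp W$. You explicitly leave this as ``the main obstacle'', and the surrounding claims do not supply it:
\begin{itemize}
\item You assert that $w$ minimizes a quadratic over a ``face-type set'' on some piece, but give no reason why $D\partial r(\x|\bar v)$ has such variational structure.
\item $\frac12\|Aw-c\|^2$ is not strongly convex unless $\ker A=\{0\}$.
\item The chain rule you plan to use is off by a polar. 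Since $\partial\sigma_\pol=N_\pol^{-1}$, it reads $D\partial r(\x|\bar v)(w)=M^T\big(DN_\pol(\bar y|M\x)\big)^{-1}(Mw)=M^TN_{\mathcal K}^{-1}(Mw)=M^TN_{\mathcal K^\circ}(Mw)$ with $\mathcal K=T_\pol(\bar y)\cap\{M\x\}^\perp$. Because $\Lambda(\x,\bar v)$ need not be a singleton, this is itself a nontrivial fact; the paper invokes \cite[Theorem 7.2]{MMS 22}.
\end{itemize}
The paper then does the computation you skip. For $z$ with $Mw\in N_{\mathcal K}(z)$ and $M^Tz=\frac{1}{\bar\lambda}A^T(c-Aw)$, it sets $K:=\{i\in I(\bar y)\mid p_i^Tz=0\}$. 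A complementarity argument shows $M\x\in C:=\cone\{p_i\mid i\in K\}$. It then concludes $w\in E:=\lin M^{-1}(C)\subseteq\lin M^{-1}N_\pol(\bar y)$ and $M^Tz\in E^\perp$ via \Cref{lem:Sub}.

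A shorter fix in the spirit of your ``minimizer over a face'' idea avoids the chain rule altogether.
\begin{itemize}
\item For polyhedral $r$ one has $d^2r(\x|\bar v)=\delta_{\mathcal K_r}$ with $\mathcal K_r=N_{\partial r(\x)}(\bar v)=T_{\partial r^*(\bar v)}(\x)$. Hence $D\partial r(\x|\bar v)=\partial\big(\tfrac12 d^2r(\x|\bar v)\big)=N_{\mathcal K_r}$ by \cite[Theorem 13.40]{RoW 98}, and $\mathcal K_r\subseteq\para\partial r^*(\bar v)$.
\item Let $F$ be the face of the polyhedral cone $\mathcal K_r$ with $w\in\operatorname{ri}F$, and put $W:=\lin F$. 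Then $w\in W$, and $N_{\mathcal K_r}(w)\subseteq W^\perp$ because $w\pm\varepsilon d\in F$ for $d\in W$ and small $\varepsilon>0$.
\item By \Cref{lem:rsubdiff}, $W\subseteq\para\partial r^*(\bar v)=\lin M^{-1}N_\pol(\bar y)$, so $\ker A\cap W=\{0\}$. The normal equations on $W$ then give the stated formula with $\rge Q=W$.
\end{itemize}
Since $W$ is controlled by $\para\partial r^*(\bar v)$, which does not depend on the choice of $\bar y\in\Lambda(\x,\bar v)$, this also settles which $y\in\Lambda(x,v)$ you may use at nearby points in (b).
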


\begin{proof} First, observe that 
    $r$ is polyhedral convex, so by \cite[Theorem 11.14]{RoW 98}, $r^*$ is also polyhedral convex, and is therefore $\mathcal{C}^2$-cone reducible.
    Also, polyhedral convex functions are in particular, piecewise linear-quadratic, so by \cite[Proposition 3]{FGH 22}, we have that $\partial r$ is semismooth* at every point.
    Lemma~\ref{lem:rsubdiff} tells us that $\para \partial r^*(\bar v) = \lin M^{-1} (N_{\pol} (\bar{y}))$, for any $\bar y \in \Lambda(\x, \bar v)$.
    It will be useful going forward to parameterize $\pol$ as $\{z \ | \ P^Tz \preceq \beta\}$ for $P \in \R^{\ell \times k}$, $\beta \in \R^{k}$, and
    we will use $p_i$ to refer to the $i$th column of $P$.
    Proposition~\ref{prop:Implicit} tells us that the directional derivative of $S$ in direction $(q, \alpha)$
    is given by the unique $w$ which satisfies \begin{align*}
        0 \in DG(\bar{b}, \bar{\lambda}, \x | 0)(q, \alpha, w)
    \end{align*} where $G(b, \lambda, x) = \frac{1}{\lambda} A^T(Ax - b) + \partial r(x)$.
    Observe that for any $\bar{y} \in \Lambda (\x, \bar{v})$, by \cite[Theorem 7.2]{MMS 22}, we have this is equivalent to \begin{align*}
        \frac{1}{\bar{\lambda}} A^T \left(q + \frac{\alpha}{\bar{\lambda}} (A \x - \bar{b}) - Aw\right)
        \in D \partial r (\bar{x} | \bar{v})(w) = M^T D(N_{\pol})^{-1}(M \x | \bar{y})(Mw).
    \end{align*}
    Now, observe that \begin{align*}
        z \in D(N_{\pol})^{-1}(M \x | \bar{y})(Mw) \iff Mw \in D N_{\pol} (\bar{y} | M \x)(z) = N_{\mathcal{K}}(z),
    \end{align*} 
    where $\mathcal{K} = T_{\pol}(\bar{y}) \cap \{M \x\}^{\bot}$ is the {\em critical cone}, by \cite[Example 13.44]{RoW 98}.
    Notice that \begin{align*}
        N_{\mathcal{K}}(z) = N_{T_{\pol}(\bar{y})}(z) + N_{\{M \x\}^{\bot}}(z) = 
        \operatorname{cone} \{p_i \ | \ i \in I(\bar{y}), \ p_i^Tz = 0\} + \operatorname{span}\{M \x\}
    \end{align*} where $I(\bar{y}) := \{i \ | \ p_i^T \bar{y} = \beta_i\}$.
    We claim that $M\x \in \operatorname{cone} \{p_i \ | \ i \in K\}$. To see this, let $K := \{i \in I(\bar{y}) \ | \ p_i^Tz = 0\}$. Set $C := \operatorname{cone} \{p_i \ | \ i \in K\} \subseteq N_{\pol}(\bar{y})$.
    We know that $M \x \in N_{\pol}(\bar{y})$, and we in fact claim that $M \x \in C$. 
We know $M\bar{x}$ can be written as $\sum_{i \in I(\bar{y})} \gamma_i p_i$ for $\gamma_i \ge 0$.
Using the fact that $z^T(M\x) = 0$ with the fact that $p_i^Tz < 0$ for $i \in I(\bar{y}) \setminus K$, (which come from $z \in T_{\pol}(\bar{y})$ and $z \in \{M \x\}^{\bot}$) we get
\begin{align*}
    0 = \sum\limits_{i \in I(\bar{y})} \gamma_i p_i^Tz = \sum\limits_{i \in K} \gamma_i \underbrace{p_i^Tz}_{ = 0} + \sum\limits_{i \in I(\bar{v}) \setminus K} \gamma_i \underbrace{p_i^Tz}_{< 0}.
\end{align*} So $\gamma_i = 0$ for $i \not \in K$. We can then write
\begin{align*}
    Mw \in C + \operatorname{span}\{M\bar{x}\}.
\end{align*}
    Set $E := \operatorname{span} M^{-1}(C)$ and notice that \begin{align*}
        E =\operatorname{span} M^{-1}(C) \subseteq M^{-1} \operatorname{span} C = M^{-1} (\rge P_{K}).
    \end{align*}  Since $M \x \in C$, we have $\x \in E$. Therefore, $Mw \in C$, so $w \in E$.
    Then, since $z \in \ker P_K^T = (\rge P_K)^{\bot}$, we have
    \begin{align}\label{eq:inEperp}
        M^Tz \in M^T ((\rge P_K)^{\bot}) \overset{\eqref{lem:Sub}}{=} \left(M^{-1}(\rge P_K)\right)^{\bot} \subseteq E^{\bot}.
    \end{align}
    Now, let $q_1,...,q_r$ be an orthonormal basis of $E$
    and set $Q = \begin{bmatrix}
        q_1 \cdots q_r
    \end{bmatrix}$. Then, $QQ^T$ is the orthogonal projection onto $E$, and consequently $QQ^Tw = w$.
    Setting $\bar{u} := \frac{1}{\bar{\lambda}} (A \x - \bar{b})$, we now infer that \begin{align*}
        0 \overset{\eqref{eq:inEperp}}{=} Q^T (\lambda M^Tz) = Q^T A^T (q + \alpha \bar{u} - Aw).
    \end{align*} Using $w \in E$, this implies \begin{align*}
        (AQ)^T AQQ^Tw = (AQ)^T(q + \alpha \bar{u}).
    \end{align*} Since $E \subseteq \operatorname{span} M^{-1} N_{\pol}(\bar{y})$, using \eqref{eq:MordCritpolyhedral} gives
    $\ker A \cap \rge Q = \{0\}$, so $(AQ)^T(AQ)$ is invertible.
    Thus, we obtain \begin{align*}
        w = Q[(AQ)^T(AQ)]^{-1} (AQ)^T (q + \alpha \bar{u}).
    \end{align*} In view of \eqref{prop:Implicit}, this establishes the directional derivative of $S$ at $(\bar{b}, \bar{\lambda})$ in direction $(q, \alpha)$ is given by
    \begin{align*}
        S'(\bar{b}, \bar{\lambda} ; q, \alpha) = Q[(AQ)^T(AQ)]^{-1} (AQ)^T (q + \alpha \bar{u}).
    \end{align*}
    We now argue \eqref{eq:MordCritpolyhedral} holds locally:

    to this end, take any sequence $(b_k,\lambda_k)\to (\bar b, \bar \lambda)$ with $x_k=S(b_k,\lambda_k)\to\bar x$ and $v_k:=\frac{1}{\lambda_k}A^T (Ax_k-b_k)\in \partial r(x_k)\to \bar v.$ Then for $\bar y\in \Lambda(\bar x,\bar v)$, by \Cref{lem:Lambda}, there exists $y_k\in \Lambda(x_k,v_k)\to \bar y$, and thus (for $k$ sufficiently large)
\[
I(y_k)\subseteq I(\bar y) 
\]
hence $\partial r^*(v_k)\subseteq\partial r^*(\bar v)$.
Consequently, 
\[
\para \partial r^*(v_k)\cap \ker A \subseteq\para \partial r^*(\bar v)\cap \ker A,
\]
for $k$ sufficiently large.  Therefore, assuming \eqref{eq:MordCritpolyhedral} at $(\bar b,\bar \lambda)$ yields that this property holds for all $(b,\lambda)$ sufficiently close.

 Hence, by reiterating the above reasoning for nearby points, $S$ is directionally differentiable at $(b,\lambda)$ sufficiently close to $(\bar b,\bar \lambda)$, and $S'((b,\lambda);(\cdot,\cdot))$ is, in particular, continuous.  Thus,   from \Cref{prop:Implicit}(c) we infer that 
\[
L:=\limsup_{(b,\lambda)\to (\bar b,\bar \lambda)} \max_{\|(q,\alpha)\|\leq 1}\|S'((b,\lambda);(q,\alpha))\|
\]
is a local Lipschitz bound for $S$ at $(\bar b,\bar \lambda)$. Now let $(b_k,\lambda_k)\to (\bar b,\bar \lambda)$ such that
\[
 \max_{\|(q,\alpha)\|\leq 1}\|S'((b_k,\lambda_k);(q,\alpha))\|\to L.    
\]
As $S'((b_k,\lambda_k);(\cdot,\cdot))$ is continuous (for all $k\in \mathbb{N}$), there exists  $\{(q_k,\alpha_k)\in \mathbb{B}\}\to (\bar q,\bar \alpha)\in \mathbb{B}$ such that 
\[
\|S'((b_k,\lambda_k);(q_k,\alpha_k))\|\to L.
\]
 Let $x_k\in S(b_k,\lambda_k)\to \bar x$, $v_k=\frac{1}{\lambda_k}A^T(Ax_k-b_k)\in \partial r(x_k)\to \bar v$. With \Cref{lem:Lambda}, we choose 
  $y^k\in \Lambda(x_k,v_k)\to \bar y$. Let 
 \[
 K_k\subseteq I(y_k)=\set{p_i}{\ip{p_i}{y_k}=\beta_i}.
 \]
 be the associated index set from earlier with $(b_k,\lambda_k, x_k,v_k, y_k, q_k, \alpha_k)$ (instead of $(\bar b,\bar \lambda, \bar x,\bar v,\bar y, \bar q, \bar \alpha)$).
 By finiteness, we may assume w.l.o.g. that $K_k= K\subseteq I(\bar y)$. And thus we can assume w.l.o.g. that for some subspace $E$ we have 
 \[
 E_k=\lin M^{-1}(\cone\set{p_i}{i\in K_k})\equiv E\subseteq\para\partial r^*(\bar v).
 \]
 for the associated subspace $E_k$.  Now let $q_1,\dots, q_t\in \R^n$ be an orthonormal basis of $E$ such that $QQ^T\in\R^{n\times n}$ with $Q=[q_1,\dots, q_t]$ is the orthogonal projection onto $E$. Since $E\subseteq\para \partial r^*(\bar v)$, we can pad $Q$ to a matrix $\bar Q:=[Q\; \hat Q]$ whose columns form an orthonormal basis of $\para \partial r^*(\bar v)$ such that $\bar Q \bar Q^T\in \R^{n\times n}$ is the orthogonal projection onto $\para \partial r^*(\bar v)$.
 
With our derivations above we therefore find that
 \begin{eqnarray*}
 \|S'((b_k,\lambda_k);(q_k,\alpha_k))\|\ & = & \left\|Q[(AQ)^T(AQ)]^{-1}(AQ)^T\left[q_k+\frac{\alpha_k}{\lambda_k}(Ax_k-b_k)\right]\right\|\\
 & \leq & \|[(AQ)^T(AQ)]^{-1}\|\cdot \left\|(AQ)^T\left[q_k+\frac{\alpha_k}{\lambda_k}(Ax_k-b_k)\right]\right\|\\
 & = & \frac{1}{\lambda_{\min}((AQ)^T(AQ))}\cdot \left\|(AQ)^T\left[q_k+\frac{\alpha_k}{\lambda_k}(Ax_k-b_k)\right]\right\|.
 \end{eqnarray*}
Now observe that 
\begin{eqnarray*}
    \lambda_{\min}((AQ)^T(AQ))& = & \min_{x\in \R^n\setminus\{0\}}\frac{(Qx)^TA^TAQx}{\|x\|^2}\\
    &=& \min_{y\in E\setminus\{0\}}\frac{y^TA^TAy}{\|y\|^2}\\
    & \geq  & \min_{y\in \para \partial r^*(\bar v)\setminus\{0\}}\frac{y^TA^TAy}{\|y\|^2}\\
    & = & \lambda_{\min}((A\bar Q)^T(A\bar Q)).
\end{eqnarray*}
Here the first identity uses, e.g., \cite[Theorem 4.2.6]{HoJ 13}, while the second one relies on the fact that $\set{Qx}{x\in \R^n\setminus\{0\}}=E\setminus\{0\}$, and that $\|Q^Ty\|=\|y\|$ for all $y\in E$. The inequality  uses that $E\subseteq\para \partial r^*(\bar v)$, and the last identity uses the arguments from the first one and that the columns of $\bar Q$ form an orthonormal basis of $ \para \partial r^*(\bar v)$. 
Using this bound we get 
\begin{align*}
        \|S'((b_k,\lambda_k);(q_k,\alpha_k))\|\ \le 
        \frac{1}{\lambda_{\min}((A \bar Q)^T(A \bar Q))}\cdot \left\|(AQ)^T\left[q_k+\frac{\alpha_k}{\lambda_k}(Ax_k-b_k)\right]\right\|
        \\ = \frac{1}{\sigma_{\min}(A \bar Q)^2} \cdot \left\|(AQ)^T\left[q_k+\frac{\alpha_k}{\lambda_k}(Ax_k-b_k)\right]\right\|.
\end{align*} Taking the limit in $k$, we get \begin{align*}
    L \le \frac{1}{\sigma_{\min}(A \bar Q)^2} \cdot \left\|(AQ)^T\left[\bar q+\frac{\bar \alpha}{\bar \lambda}(A \x- \bar b)\right]\right\| \\
    \le \frac{1}{\sigma_{\min}(A \bar Q)^2} \left( \max_{q \in \B} \lVert (AQ)^T q \rVert + \max_{\alpha \in [-1, 1]} \lVert \frac{1}{\bar \lambda} (AQ)^T \alpha (A \x - \bar{b}) \rVert \right) \\
    = \frac{1}{\sigma_{\min}(A \bar Q)^2} \left( \lVert (AQ)^T \rVert + \lVert \frac{1}{\bar \lambda} (AQ)^T (A \x - \bar{b}) \rVert \right) \\
    = \frac{1}{\sigma_{\min}(A \bar Q)^2} \left( \sigma_{\max}(AQ) + \frac{1}{\bar \lambda} \lVert (AQ)^T (A \x - \bar{b}) \rVert \right) .
\end{align*} By a similar argument to the one above, we have \begin{align*}
    \lambda_{\max}((AQ)^T AQ) = \max_{x \in \R^n \setminus \{0\}} \frac{x^TQ^TA^TAQx}{\lVert x \rVert^2}
    = \max_{y \in E \setminus \{0\}} \frac{y^TA^TAy}{\lVert y \rVert^2}
    \le \lambda_{\max}((A \bar Q)^T A \bar Q)
\end{align*} which implies that $\sigma_{\max}(AQ) \le \sigma_{\max}(A \bar Q)$.
This gives a further bound of \begin{align*}
    L \le \frac{1}{\sigma_{\min}(A \bar Q)^2} \left( \sigma_{\max}(A \bar Q) + \frac{1}{\bar \lambda} \lVert (AQ)^T (A \x - \bar{b}) \rVert \right) .
\end{align*} Finally, because $\bar Q^T = \begin{bmatrix}
    Q^T \\
    \hat{Q}^T
\end{bmatrix}$, we have that \begin{align*}
    \frac{1}{\sigma_{\min}(A \bar Q)^2} \left( \sigma_{\max}(A \bar Q) + \frac{1}{\bar \lambda} \lVert (AQ)^T (A \x - \bar{b}) \rVert \right) \\ \le
    \frac{1}{\sigma_{\min}(A \bar Q)^2} \left( \sigma_{\max}(A \bar Q) + \frac{1}{\bar \lambda} \lVert (A \bar Q)^T (A \x - \bar{b}) \rVert \right).
\end{align*} which gives \begin{align*}
    L \le     \frac{1}{\sigma_{\min}(A \bar Q)^2} \left( \sigma_{\max}(A \bar Q) + \frac{1}{\bar \lambda} \lVert (A \bar Q)^T (A \x - \bar{b}) \rVert \right).
\end{align*}
\end{proof}

\begin{remark}
The above result could have been stated for $p = (A, b, \lambda)$ as opposed to $(b, \lambda)$, but due to space and legibility constraints, we have stated a simplified version. We could have also stated both \Cref{prop:stabilityc2} and \Cref{cor:polyhedraldd} for the more general setting where the data fidelity term is given by a generic twice continuously differentiable convex function $\varphi$, instead of $\frac{1}{2} \lVert \cdot \rVert^2$. Under this setting, the Mordukhovich criterion \eqref{eq:CQ} instead reads \begin{align*}
    \ker \bar{A}^T \nabla^2 \varphi(\bar A \bar x - \bar b) \bar{A} \cap \para \partial r^*(\bar v) = \{0\}.
\end{align*}
\end{remark}

\begin{example}[LASSO]
A popular choice of regularizer which can be written in the form $\sigma_{\pol} \circ M$ is the $\ell_1$ norm, in which case \eqref{eq:generalLeastSquares} becomes the celebrated LASSO problem \cite{T 96}. Stability results for the solution map of the LASSO problem can be found in \cite{BBH 22}. In order to apply \Cref{cor:polyhedraldd}, we take $\pol = \{z \, | \, P^Tz \preceq \beta \}$ for $P = \begin{bmatrix}
    I & -I
\end{bmatrix}$, $\beta$ equal to the vector with every component equal to one (which makes $\pol$ the unit ball under the $\ell_{\infty}$ norm), and $M = I$. Let $(\bar b, \bar \lambda, \bar x) \in \gph S$ and set $\bar v = -\frac{1}{\bar \lambda} A^T(A \bar x - \bar b)$. First, notice that when $M = I$, the set $\Lambda(\bar x, \bar v)$ defined in \Cref{lem:rsubdiff} reduces to $\{\bar v\}$. By \cite[Theorem 6.46]{RoW 98}, $N_{\pol}(\bar v) = \cone \{p_i \ | \ i \in J(\bar v)\}$ for $J(\bar v) = \{i \ | \ p_i^T\bar v = \beta_i\}$. From our choice of $P$ and $\beta$, this further simplifies to $N_{\pol}(\bar v) = \cone \{e_i \ | \ |\bar{v}_i| = 1\}$. Putting these facts together, we get that the Mordukhovich criterion \eqref{eq:MordCritpolyhedral} becomes \begin{align*}
    \ker A \cap \lin \{e_i \ | \ i \in J(\bar{v})\}=\{0\}.
\end{align*} This written as simply \begin{align*}
    \ker A_{J(\bar v)} = \{0\}.
\end{align*} When this holds, we may apply \Cref{cor:polyhedraldd} and obtain that $S$ is (single-valued) directionally differentiable and locally Lipschitz at $(\bar b, \bar \lambda)$.
Furthermore, by noticing that $\{e_i \ | \ i \in J(\bar v) \}$ gives an orthonormal basis of $\lin N_{\pol}(\bar v)$, we can obtain that the directional derivative in direction $(q, \alpha)$ given by
\begin{align*}
  S'(\bar b, \bar \lambda ; q, \alpha) = I_K\left((A_K^TA_K)^{-1} A_K^T (q + \frac{\alpha}{\bar \lambda}(A \x - \bar b)\right)
\end{align*} for some index set $K = K(q, \alpha) \subseteq J(\bar v)$ and the Lipschitz modulus given by \begin{align*}
    L \le \frac{1}{\sigma_{\min}(A_J)^2} \left(\sigma_{\max}(A_J) + \frac{1}{\bar \lambda} \lVert A_J^T (A \bar x - \bar b) \rVert\right).
\end{align*} Note that this agrees with the result in \cite[Theorem 4.13(a)]{BBH 22}.
\hfill $\diamond$
\end{example}

\subsection{PLQ penalties as  regularizers}\label{sec:PLQ}

As discussed in ~\Cref{lem:protodiff}, a $\mathcal{C}^2$-cone reducible function is twice epi-differentiable. However, a twice epi-differentiable may be not $\mathcal{C}^2$-cone reducible. In particular,  convex piecewise linear-quadratic functions (in the sense of \cite[Definition 10.20]{RoW 98}), which are twice epi-differentiable, are not necessarily $\mathcal{C}^2$-cone reducible,
as shown in the following example.

\begin{example} 
    Let $r^* : \R^2 \to \R$ be given by \begin{align*}
        r^*(v_1, v_2) = v_1^2 + v_2^2 + |v_1v_2|=\begin{cases} v_1^2 + v_2^2 + v_1v_2, & \text{if }v_1,v_2\ge 0,\\
        v_1^2 + v_2^2 - v_1v_2 & \text{if }v_1\le0,v_2\ge 0,\\
        v_1^2 + v_2^2 + v_1v_2 & \text{if }v_1\le0,v_2\le 0,\\
        v_1^2 + v_2^2 - v_1v_2 & \text{if }v_1\ge0,v_2\le 0.\\
        \end{cases}
    \end{align*}
Thus, $r^*$ is a convex piecewise linear-quadratic function; so is $r$ by \cite[Theorem 11.14]{RoW 98}.
Set $\bar{v} = (0, 0)$ and note that
    \begin{align*}
    \partial r^*(v_1, v_2) = \begin{cases}
    (2v_1 + v_2, 2v_2 + v_1) & \text{if } v_1v_2 > 0, \\
    (2v_1 - v_2, 2v_2 - v_1) & \text{if } v_1v_2 < 0, \\
    (2v_1, 2v_2) + [-|v_2|, |v_2|] \times \{0\} & \text{if } v_1 = 0, v_2 \not = 0, \\
    (2v_1, 2v_2) + \{0\} \times [-|v_1|, |v_1|]  & \text{if } v_2 = 0, v_1 \not = 0. \\
    \end{cases}
\end{align*}
This also implies that $\partial r^* (0,0) = \{(0,0)\}$. Set $\bar{x} = (0,0)$. Using the formula for the kernel of the generalized Hessian as a union of parallel subspaces (see \cite[Theorem 4.1]{RM 11}), 
we get \begin{align}
    \ker \partial^2 r(\bar{x}, \bar{v}) = \R \times \{0\} \cup \{0\} \times \R \not = \para \partial r^* (\bar{v}) = \{0\}.
\end{align}
Thus, $r^*$ cannot be $\mathcal{C}^2$-cone reducible at $\bar{v}$ by ~\Cref{thm:c2conepargenhess}.
    \hfill $\diamond$
\end{example}

\noindent
However, in some cases of $r$ being piecewise linear-quadratic, we still have 
the relation that the kernel of the generalized Hessian of $r$ is equal to the subspace parallel to $\partial r^*$.

\begin{definition}[Piecewise Linear-Quadratic Penalty]
    Let $\pol \subseteq \R^{n}$ be nonempty and polyhedral and let $B \in \R^{n \times n}$ be symmetric positive semidefinite.
    The function $\tpb$ defined by \begin{align*}
        \tpb(x) = \sup_{z \in \pol} \left\{x^Tz - \frac{1}{2} z^TBz \right\}
    \end{align*} is called a \emph{piecewise linear-quadratic penalty}.
    Note that $\tpb$ is proper, closed, and convex, and $\tpb^*(y) = \delta_{\pol}(y) + \frac{1}{2} y^TBy$ for $y\in \R^n$.
\end{definition}

As $\tpb^*(y) = \delta_{\pol}(y) + \frac{1}{2} y^TBy$, its epigraph is computed by
\[
\operatorname{epi} \tpb^* =h^{-1}(\pol\times\R_-)\;\; \mbox{with}\;\; h(y,r):=\left(y,\frac{1}{2}y^TBy-r\right).
\]
Since  $D h(y,r)=\begin{pmatrix} I& By\\
0&-1\end{pmatrix}$ is surjective for any $(y,r)\in \R^n\times \R$ and the polyhedral set $\pol\times\R_-$ is  $\mathcal{C}^2$-cone reducible, $\operatorname{epi} \tpb^*$ is also a $\mathcal{C}^2$-cone reducible set by \cite[Proposition~3.2]{Sh03}, i.e., $\tpb^*$ is a $\mathcal{C}^2$-cone reducible function.

In what follows, $r = \tpb \circ M$ where $M \in \R^{\ell \times n}$ and $\tpb$ is a piecewise linear-quadratic penalty,
parameterized by a positive semidefinite matrix $B \in \mathbb{S}^{\ell}_{+}$ and a polyhedral set $\pol \subseteq \R^{\ell}$. Although  $\tpb^*$ is $\mathcal{C}^2$-cone reducible, it is not presently clear to us if $r^*$ is $\mathcal{C}^2$-cone reducible. However, in ~\Cref{eq:kerHessPLQ} below, we show that the kernel of the generalized Hessian of the function $r$ enjoys the formula \eqref{eq:pargenhessc2} under a mild condition \eqref{eq:compactnessCQ} in the next result. 



\begin{lemma}[Compactness constraint qualification] \label{CCQ}
    Let $r = \tpb \circ M$ and suppose the following condition holds:
    \begin{align}\label{eq:compactnessCQ}
        \mathcal{P}^{\infty} \cap \ker B \cap \ker M^T = \{0\}
    \end{align} Then, for all $\bar{v} \in \rge M^T$, the following set is
    nonempty and compact:
    \begin{align}\label{eq:Tv}
        T(v) := \argmin \{q_B(y) + \delta_{\pol}(y) \ | \ M^Ty = v\} = \{y \ | \ r^*(v) = \tpb^*(y)\}
    \end{align}
\end{lemma}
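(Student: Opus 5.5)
We treat the minimization problem defining $T(v)$ as a convex quadratic program over a polyhedron and obtain nonemptiness and compactness from a recession-cone argument. Here $q_B(y):=\frac12 y^TBy$. Fix $v\in\rge M^T$. The objective $\phi_v(y):=q_B(y)+\delta_{\pol}(y)+\delta_{\{M^Ty=v\}}(y)$ is proper, closed and convex exactly when the feasible set $\pol\cap (M^T)^{-1}(v)$ is nonempty. We first note that this is automatic only if $v\in M^T(\pol)$, not merely $v\in\rge M^T$. So the statement should be read either for $v\in\dom r^*=M^T(\pol)$, or, if $\pol$ is such that $M^T(\pol)=\rge M^T$, exactly as written. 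We will prove it for $v$ with nonempty feasible set, and check $M^T(\pol)\supseteq\rge M^T$ where needed; otherwise we restrict to $v\in\dom r^*$, which is the relevant case since $\bar v\in\partial r(\bar x)\subseteq\dom r^*$.

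The key step is the recession function. For a proper closed convex $\phi$ on $\R^\ell$, the level sets are nonempty and bounded (equivalently, $\argmin\phi$ is nonempty and compact) iff $\phi^\infty(d)>0$ for all $d\neq 0$ (\cite[Theorem 3.26/Corollary 3.27]{RoW 98}). We compute $\phi_v^\infty$ as a sum of horizon functions, which is valid here since all summands are convex with a common domain point. This gives
\[
\phi_v^\infty(d)=q_B^\infty(d)+\delta_{\pol^\infty}(d)+\delta_{\ker M^T}(d).
\]
Moreover $q_B^\infty(d)=0$ if $Bd=0$ and $+\infty$ otherwise, because $\frac{q_B(y+td)}{t}\to\infty$ unless $d^TBd=0$, i.e.\ $d\in\ker B$ since $B\succeq0$. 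Hence $\phi_v^\infty(d)\le 0$ for some $d\neq0$ iff $0\neq d\in\pol^\infty\cap\ker B\cap\ker M^T$, which \eqref{eq:compactnessCQ} excludes. Therefore $\argmin\phi_v=T(v)$ is nonempty and compact.

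It remains to prove the identification $T(v)=\{y\mid r^*(v)=\tpb^*(y)\}$, where the right-hand set is implicitly intersected with $\{M^Ty=v\}$. By the conjugate chain rule $(\tpb\circ M)^*(v)=\inf\{\tpb^*(y)\mid M^Ty=v\}$, with attainment and closedness; this holds either because everything is PLQ/polyhedral \cite[Theorem 11.14 and Corollary 11.33]{RoW 98}, or directly from the attainment just proved. Since $\tpb^*=q_B+\delta_\pol$, the feasible $y$ with $\tpb^*(y)=r^*(v)$ are exactly the minimizers. The main obstacle is the justification that the infimal formula for $r^*$ holds without the closure operation, i.e.\ that $v\mapsto\inf\{\tpb^*(y)\mid M^Ty=v\}$ is already lsc. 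The compactness CQ yields this via \cite[Theorem 3.31/Corollary 3.32]{RoW 98}, since $\ker M^T\cap\{d\mid(\tpb^*)^\infty(d)\le0\}=\{0\}$. This is the same horizon computation reused, and it gives the result.
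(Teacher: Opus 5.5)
Your proof is correct and is essentially the paper's argument. Both write $T(v)$ as the $\argmin$ of $\varphi_v=q_B+\delta_{\pol}+\delta_{(M^T)^{-1}\{v\}}$ and read off nonemptiness and compactness from $\varphi_v^\infty=\delta_{\ker B\cap\pol^\infty\cap\ker M^T}$, which \eqref{eq:compactnessCQ} makes positive off the origin. Your extra care is warranted:
\begin{itemize}
\item The paper's claim that $\varphi_v$ is proper for every $v\in\rge M^T$ fails whenever $\pol\cap(M^T)^{-1}(v)=\emptyset$ (e.g.\ $M=I$, $\pol$ bounded, $v\notin\pol$). The lemma should therefore be read for $v\in\dom r^*=M^T(\pol)$, as you do.
\item The paper gives no argument for the second equality in \eqref{eq:Tv}, which also needs the constraint $M^Ty=v$ on the right. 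Your use of the horizon criterion for images of functions, giving $r^*=M^T\tpb^*$ with attainment, supplies that argument.
\end{itemize}
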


\begin{proof}
    Let $v \in \rge M^T$. First, $T(v)$ may be written as the $\argmin$ of an unconstrained problem as follows:\begin{align*}
        T(v) = \underset{\mathbb{Y}}{\argmin} \, \varphi_v
    \end{align*} for $\varphi_v(y) = q_B(y) + \delta_{\pol}(y) + \delta_{(M^T)^{-1}\{v\}}(y)$.
    $\varphi_v$ is proper, closed, and convex, so by \cite[Proposition 3.1.3]{AT 03}, 
    nonemptiness and compactness of $\argmin \varphi_v$ is equivalent to positivity of the horizon function $\varphi_v^{\infty}(d)$ for all nonzero $d$.
    The horizon function of $\varphi_v$ can be computed to be \begin{align*}
        \varphi_v^{\infty}(d) = \delta_{\ker B}(d) + \delta_{\pol^{\infty}}(d) + \delta_{\ker M^T}(d)
        = \delta_{\ker B \cap \ker M^T \cap \pol^{\infty}} (d),
    \end{align*} which is equal to $0$ if and only if $d \in \ker B \cap \ker M^T \cap \pol^{\infty}$.
\end{proof}

The condition~\eqref{eq:compactnessCQ} is very mild. In particular, it holds when either $\mathcal{P}$ is a compact polyhedral set or $M$ is a surjective map.

The following result shows that when $r$ is a piecewise linear-quadratic penalty composed with a linear operator, we still have the aforementioned relation.

\begin{theorem}\label{eq:kerHessPLQ}
    Let $r = \tpb \circ M$. Let $(\bar{x}, \bar{v}) \in \gph \partial r$. Suppose \eqref{eq:compactnessCQ} holds. Then
    \begin{align*}
        \ker \partial^2 r(\bar{x} | \bar{v}) = \para \partial r^* (\bar{v}).
    \end{align*}
\end{theorem}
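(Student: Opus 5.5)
The plan is to prove the two inclusions separately. The inclusion $\para \partial r^*(\bar v)\subseteq \ker \partial^2 r(\bar x|\bar v)$ should come from \eqref{eq:pargenhesstw}; the reverse inclusion will be obtained by pushing the normal cone of $\gph \partial r^*$ through the structure $r^*(v)=\min\{\tpb^*(y)\mid M^Ty=v\}$ and invoking \eqref{eq:pargenhessc2} for the $\mathcal{C}^2$-cone reducible function $\tpb^*$.

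\emph{Inclusion ``$\supseteq$''.} First observe that $r=\tpb\circ M$ is convex piecewise linear-quadratic, being a PLQ function composed with a linear map. By \cite[Theorem 11.14]{RoW 98}, $r^*$ is then PLQ as well. Convex PLQ functions are twice epi-differentiable \cite[Theorem 13.14]{RoW 98}. Hence \eqref{eq:pargenhesstw} of \Cref{thm:c2conepargenhess} applies to $g=r$ and gives $\para \partial r^*(\bar v)\subseteq \ker \partial^2 r(\bar x|\bar v)$.

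\emph{Inclusion ``$\subseteq$''.} Using \eqref{eq:SDInversion}, I rewrite the kernel as $\ker\partial^2 r(\bar x|\bar v)=\{w\mid (-w,0)\in N_{\gph\partial r^*}(\bar v,\bar x)\}$. By \Cref{CCQ}, for $v$ near $\bar v$ in $\rge M^T$ the value $r^*(v)=\min\{\tpb^*(y)\mid M^Ty=v\}$ is attained on the nonempty compact set $T(v)$. This gives the representation
\[
\gph \partial r^*=\pi\bigl(\Omega\bigr),\qquad \Omega:=\{(v,x,y)\mid M^Ty=v,\ Mx\in\partial\tpb^*(y),\ y\in T(v)\},
\]
where $\pi$ is the projection onto $(v,x)$. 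Here one uses $x\in\partial r^*(v)\iff Mx\in\partial \tpb^*(y)$ for any/all $y\in T(v)$, which is the analogue of \Cref{lem:rsubdiff} and follows from the chain rule for the inf-projection. Since $\gph\partial\tpb^*$ is the linear image $(y,n)\mapsto(y,By+n)$ of $\gph N_\pol$, it is a finite union of polyhedra, and so is $\Omega$. The compactness of $T(\bar v)$ (local boundedness of the fibres) will be used to apply the projection rule for limiting normals (\cite[Theorem 6.43]{RoW 98}-type). The polyhedral (Robinson) structure of the linear constraints will be used to apply the preimage rule without any further qualification condition. Together these should yield $\bar y\in T(\bar v)$ and $u$ with
\[
(-w,0)\in N_{\gph\partial r^*}(\bar v,\bar x)\ \Longrightarrow\ M^T u\,\text{-type relation with } (u,-Mw)\in N_{\gph \partial\tpb^*}(\bar y,M\bar x),\ M^{T}\text{-component forcing } u\in\rge M .
\]
Unwinding this (again via \eqref{eq:SDInversion}, now for $\tpb$) I expect to obtain $Mw\in\ker\partial^2\tpb(M\bar x|\bar y)$. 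By \eqref{eq:pargenhessc2} applied to $g=\tpb$, whose conjugate is $\mathcal C^2$-cone reducible as shown before \Cref{CCQ}, this kernel equals $\para\partial\tpb^*(\bar y)$.

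\emph{Main obstacle.} The last step is to pass from $Mw\in \para\partial\tpb^*(\bar y)$ to $w\in\para\partial r^*(\bar v)=\lin M^{-1}(\partial\tpb^*(\bar y))$. As the example after \Cref{lem:rsubdiff} shows, $M^{-1}(\lin C)$ can be strictly larger than $\lin M^{-1}(C)$, so this implication is not automatic. To handle it, I would first try to choose $\bar y$ in the relative interior of the convex compact set $T(\bar v)$, and to choose $\bar x$-perturbations $x^k\in\operatorname{ri}\partial r^*(\bar v)$ as in the proof of \Cref{thm:c2conepargenhess}. The aim is to arrange that $M^{-1}(\partial\tpb^*(\bar y))$ meets $M\bar x$ in a relatively interior way; then the cone generated at that point is the full parallel subspace and the span can be pulled inside. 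If this fails, the fallback is to show that $r^*$ itself is $\mathcal C^2$-cone reducible at $\bar v$ under \eqref{eq:compactnessCQ}, using the reduction $h$ for $\tpb^*$ together with compactness of $T(\bar v)$. With that in hand, \eqref{eq:pargenhessc2} would apply to $r$ directly.
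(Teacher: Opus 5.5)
Your ``$\supseteq$'' half is fine. $r^*$ is convex piecewise linear-quadratic, hence twice epi-differentiable, so \eqref{eq:pargenhesstw} gives $\para\partial r^*(\bar v)\subseteq\ker\partial^2 r(\bar x|\bar v)$.

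The gap is in ``$\subseteq$'', at exactly the step you flag, and it cannot be closed. Once $B\neq 0$, both the implication $Mw\in\para\partial\tpb^*(\bar y)\Rightarrow w\in\para\partial r^*(\bar v)$ and the claimed equality itself can fail under the stated hypotheses. Take $\ell=3$, $n=2$, $\pol=\{z\in\R^3\mid z_1\le 0,\ z_2\le 0\}$ and
\[
B=\begin{pmatrix}1&0&-\tfrac12\\0&1&-\tfrac12\\-\tfrac12&-\tfrac12&1\end{pmatrix},\qquad M=\begin{pmatrix}1&0\\-1&0\\0&1\end{pmatrix}.
\]
Then $B$ is positive definite, so \eqref{eq:compactnessCQ} holds, and $M^Ty=(y_1-y_2,y_3)$.

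\emph{The parallel subspaces.} For $v=(0,t)$ with $t\ge0$, minimizing $\tfrac12y^TBy$ over $y=(s,s,t)$, $s\le0$, gives $T(v)=\{(0,0,t)\}$. Hence
\[
\partial r^*(0,t)=M^{-1}\bigl(N_\pol(0,0,t)+B(0,0,t)\bigr)=[-\tfrac t2,\tfrac t2]\times\{t\}.
\]
In particular $\partial r^*(0)=\{0\}$, so at $(\bar x,\bar v)=(0,0)$ we have $\para\partial r^*(\bar v)=\{0\}$.

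\emph{The kernel.} For $v$ near $(0,t)$ with $t>0$ and $v_1\neq0$, only one constraint is active. There $r^*(v)=\tfrac12(v_1^2+v_2^2+|v_1|v_2)$ is smooth, and $\nabla r^*(v)$ has first coordinate near $\pm\tfrac t2$. So near $((0,t),(0,t))$ the set $\gph\partial r^*$ coincides with the planar piece $\{((0,s),(a,s))\}$, whose normal space contains $(-e_1,0)$. Letting $t\downarrow0$ gives $(-e_1,0)\in N_{\gph\partial r^*}(0,0)$, i.e.\ $e_1\in\ker\partial^2 r(0|0)$. As a sanity check, take $A=\begin{pmatrix}0&1\end{pmatrix}$, $\bar b=0$, $\bar\lambda=1$: then $\ker A\cap\para\partial r^*(\bar v)=\{0\}$, yet $S(2t,1)=[-\tfrac t2,\tfrac t2]\times\{t\}$ for every $t>0$.

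\emph{Where your argument breaks.} Here $Me_1=(1,-1,0)\in\para\partial\tpb^*(0)=\R^2\times\{0\}$ but $e_1\notin\para\partial r^*(0)$, so your final implication is false. Your relative-interior device has nothing to act on, since $T(\bar v)$ and $\partial r^*(\bar v)$ are singletons. Your fallback also fails: $r^*$ cannot be $\mathcal C^2$-cone reducible at $0$, since otherwise \eqref{eq:pargenhessc2} would hold.

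Two smaller points. First, the projection/preimage calculus only yields $(-Mw,\zeta)\in N_{\gph\partial\tpb^*}(\bar y,M\bar x)$ with $\zeta\in\ker M^T$, not $\zeta=0$. So even $Mw\in\ker\partial^2\tpb(M\bar x|\bar y)$ is unjustified when $\ker M^T\neq\{0\}$. Second, $\para\partial r^*(\bar v)=\lin\bigl(M^{-1}(\partial\tpb^*(\bar y))-\bar x\bigr)$, not $\lin M^{-1}(\partial\tpb^*(\bar y))$, because $\partial\tpb^*(\bar y)=N_\pol(\bar y)+B\bar y$ is a shifted cone.

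The paper argues differently. It uses the union formula $\ker\partial^2 r(\bar x|\bar v)=\bigcup_{v\in\mathcal O}\para\partial r^*(v)$ from \cite{RM 11}, plus a nesting $\para\partial r^*(v)\subseteq\para\partial r^*(\bar v)$ derived from outer Lipschitz continuity of $T$, compactness of $T(\bar v)$, and $N_\pol(y)\subseteq N_\pol(y_i)$. The same example defeats that nesting. It rests on $\para\partial r^*(v)=M^{-1}(\para(N_\pol(y)\cap\rge M))$, which drops the shift $By$. Here $N_\pol(0,0,t)\cap\rge M=\{0\}$, whereas $(N_\pol(0,0,t)+B(0,0,t))\cap\rge M$ is a nondegenerate segment, so $\para\partial r^*(0,t)=\R e_1$. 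Intersecting $\pol$ with $[-1,1]^3$ changes nothing near the origin, so compactness of $\pol$ does not rescue the statement.

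What survives in general is your inclusion $\para\partial r^*(\bar v)\subseteq\ker\partial^2 r(\bar x|\bar v)$. Equality needs an extra hypothesis. It holds, for instance, whenever $r^*$ itself is $\mathcal C^2$-cone reducible, and \Cref{thm:c2conepargenhess} then gives it directly. Two such cases are $B=0$ (then $r^*$ is polyhedral) and $M$ surjective (then $\epi r^*$ is the preimage of $\epi\tpb^*\times\{0\}$ under a linear isomorphism).
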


\begin{proof}
    According to the proof of \cite[Theorem 4.1]{RM 11}, for a sufficiently small
    neighborhood $\mathcal{O}$ of $\bar{v}$, we have \begin{align*}
        \partial^2 r^* (\bar{v} | \bar{x})(0) = \bigcup\limits_{v \in \mathcal{O}} \para \partial r^*(v).
    \end{align*} Because $\partial^2 r^*(\bar{v} | \bar{x})(0)$ is a union of subspaces, we can write it as
    $\ker \partial^2 r(\bar{x} | \bar{v})$. By shrinking the neighborhood to $\mathcal{O} \cap \rge M^T$, we get
    \begin{align}\label{eq:parker}
        \para \partial r^*(\bar{v}) \subseteq \ker \partial^2 r(\bar{x} | \bar{v}) = \bigcup\limits_{v \in \mathcal{O} \cap \rge M^T} \para \partial r^* (v).
    \end{align} By \cite[Theorem 11.33]{RoW 98}, we have that $r^*(v)$ is given by \begin{align*}
        r^*(v) = (M^T \tpb^*)(v) = \inf\{ \tpb^*(y) \ | \ M^Ty = v\}.
    \end{align*} Recall the set $T(v)$ in \eqref{eq:Tv}. Note that  $T$ is nonempty and compact-valued for all $v \in \rge M^T$ by ~\Cref{CCQ}.
    We can write $T(v)$ as \begin{align*}
        \{y \ |\ M^Ty=v, 0 \in By + N_{\pol}(y) + \rge M^T\}.
    \end{align*}
    Observe that $T(v)$ is the intersection of polyhedra, so $T$ is polyhedral. By \cite[3D]{DoR 14}, $T$ is outer Lipschitz continuous relative to its domain $\rge M^T$.
    Thus, shrinking $\mathcal{O}$ if necessary, we can obtain a (uniform) constant $\kappa > 0$ such that 
    \begin{align}\label{eq:TVV}
        T(v) \subseteq T(\bar{v}) + \kappa \lVert v - \bar{v} \rVert \mathbb{B} \text{ for all } v \in \mathcal{O} \cap \rge M^T.
    \end{align} For such a $v$, \cite[Proposition 4.3]{NVV 25} guarantees that we may write \begin{align*}
        \para \partial r^* (v) = M^{-1}(\para(\partial \tpb^*(y) \cap \rge M))
    \end{align*} for every $y \in T(v)$. We also have \begin{align*}
        \para \partial \tpb^*(y) = \para (N_{\pol}(y) + By) = \operatorname{span} N_{\pol}(y).
    \end{align*} By polyhedrality of $\delta_{\pol}$, for every $y \in T(v)$, there exists a neighborhood $V_y$ of $y$ such that
    \begin{align*}
        N_{\pol}(y') \subseteq N_{\pol}(y) \text{ for every } y' \in V_y.
     \end{align*} By compactness of $T(\bar{v})$, there exist finitely many $y_1,...,y_{\ell} \in T(\bar{v})$ such that \begin{align*}
        T(\bar{v}) \subseteq \bigcup\limits_{i=1}^{\ell} V_{y_{i}} := V.
     \end{align*} 
     $V$ is an open set containing $T(\bar{v})$, so shrinking $\mathcal{O}$ further if necessary, we can obtain
     \begin{align*}
        T(\bar{v}) + \kappa \lVert v - \bar{v} \rVert \mathbb{B} \subseteq V \text{ for } v \in \mathcal{O} \cap \rge M^T.
     \end{align*} This together with \eqref{eq:TVV} tells us $T(v) \subseteq V$ for any $v \in \mathcal{O} \cap \rge M^T$,
     so for any $y \in T(v)$, there is some $i \in [\ell]$ such that $y \in N_{y_{i}}$.
     Thus, we have 
     \begin{align*}
        \para \partial r^*(v) = M^{-1} (\para(N_{\pol}(y) \cap \rge M)) \subseteq
        M^{-1} (\para(N_{\pol}(y_i) \cap \rge M))\\ = \para \partial r^* (\bar{v}).
     \end{align*}
     Combining this with \eqref{eq:parker} gives 
     \begin{align}\label{eq:kerr}
        \ker \partial^2 r(\bar{x} | \bar{v}) = \para \partial r^* (\bar{v}) = M^{-1}(\para(N_{\pol}(y) \cap \rge M))
     \end{align} for any $y \in T(\bar{v})$.
\end{proof}

Applying this to the least squares case, we obtain the following:

\begin{theorem}\label{thm:plqstab}
    Let $r = \tpb \circ M$ and suppose $r$ satisfies the compactness qualification \eqref{eq:compactnessCQ}.
    Let $\pol$ be parameterized as $\{z \ | \ P^Tz \preceq \beta\}$ for $P \in \R^{\ell \times k}$, $\beta \in \R^{k}$, and let $p_i$ refer to column $i$ of $P$.
    Let $S : \R^{m \times n} \times \R^m \times \R_{++} \rightrightarrows \R^n$ be given by
    \begin{align*}
        S(A, b,\lambda)= \underset{x \in \R^n}{\argmin} \left\{ \frac{1}{2} \lVert Ax-b \rVert^2 +\lambda r(x)
        \right\}.
    \end{align*} 
    Let $(\bar{A}, \bar{b}, \bar{\lambda}, \bar{x}) \in \gph S$ (i.e. $\bar{x}$ solves \eqref{eq:generalLeastSquares} with parameters $\bar{A}$, $\bar{b}$, and $\bar{\lambda}$ and regularizer $r$).
    Set $\bar{v} = -\frac{1}{\bar{\lambda}} \bar{A}^T(\bar{A}\bar{x} - \bar{b})$.
    The condition that for some $\bar y \in T(\bar v)$, we have
    \begin{align}\label{eq:CQPLQM}
        \ker \bar{A} \cap M^{-1}(\operatorname{span}\{p_i \ | \ i \in I(\bar{y})\}) = \{0\}
    \end{align} where $I(\bar{y}) = \{i \ | \ p_i^T \bar y  = \beta_i\}$, 
    is equivalent to any of the following conditions:
\begin{itemize}
        \item[(a)] $S$ is single-valued at $(\bar{A}, \bar{b}, \bar{\lambda})$.
        \item[(b)] $S$ is locally Lipschitz at $(\bar{A}, \bar{b}, \bar{\lambda})$.
        \item[(c)] $S$ is directionally differentiable at $(\bar{A}, \bar{b}, \bar{\lambda})$.
        \item[(d)] $S$ is semismooth at $(\bar{A}, \bar{b}, \bar{\lambda})$. 
\end{itemize}
\end{theorem}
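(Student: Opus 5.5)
The plan is to reduce everything to \Cref{thm:c2conepargenhess}-type information supplied by \Cref{eq:kerHessPLQ}, and then run the same scheme as in the proof of \Cref{prop:stabilityc2}. The setting is again \eqref{eq:Data}, with $F=\partial r$ and $f(p,x)=\frac1\lambda A^T(Ax-b)$.

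\emph{Step 1: the subdifferential of $r^*$.} First record the needed facts about $r$. Since $\tpb$ is convex PLQ, so is $r=\tpb\circ M$. Hence $\partial r$ is semismooth* everywhere by \cite[Proposition 3]{FGH 22}. Moreover $r$ is twice epi-differentiable, so $\partial r$ is proto-differentiable by \cite[Theorem 2.2]{R 90}. Next, for $\bar y\in T(\bar v)$ (nonempty by \Cref{CCQ}), I would use the chain rule and $\partial\tpb^*(y)=By+N_\pol(y)$ to write
\[
\partial r^*(\bar v)=M^{-1}\bigl(B\bar y+N_\pol(\bar y)\bigr).
\]
This is the analogue of \Cref{lem:rsubdiff}, which the representation in \eqref{eq:kerr} also delivers. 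Differences of elements of this set are mapped by $M$ into $N_\pol(\bar y)-N_\pol(\bar y)\subseteq\lin N_\pol(\bar y)$. By \cite[Theorem 6.46]{RoW 98}, $\lin N_\pol(\bar y)=\lin\{p_i\mid i\in I(\bar y)\}$. Therefore
\[
\para\partial r^*(\bar v)\subseteq M^{-1}\bigl(\lin\{p_i\mid i\in I(\bar y)\}\bigr)\qquad\text{for every }\bar y\in T(\bar v).
\]

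\emph{Step 2: \eqref{eq:CQPLQM} implies (a)--(d).} Assume \eqref{eq:CQPLQM} holds for some $\bar y$. The inclusion from Step 1 gives $\ker\bar A\cap\para\partial r^*(\bar v)=\{0\}$. By \Cref{eq:kerHessPLQ} this is exactly the Mordukhovich criterion \eqref{eq:MordData}, i.e.\ \eqref{eq:MordCrit}. \Cref{prop:Implicit}(b),(c),(d) then yields single-valuedness, local Lipschitz continuity and directional differentiability. Combined with semismoothness* of $\partial r$, it also yields semismoothness via \Cref{lem:Semi}.

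\emph{Step 3: the converse.} Each of (b), (c), (d) presupposes (a), so it suffices to start from (a). I read (a) as local single-valuedness near $(\bar A,\bar b,\bar\lambda)$, which is how it is used in \Cref{prop:stabilityc2}. The first goal is to show $\ker\bar A\cap\para\partial r^*(\bar v)=\{0\}$.
\begin{itemize}
\item Pick $x'\in\operatorname{ri}\partial r^*(\bar v)$ close to $\bar x$ and set $b'=\bar b+\bar A(x'-\bar x)$, which is close to $\bar b$.
\item Then $-\frac1{\bar\lambda}\bar A^T(\bar Ax'-b')=\bar v\in\partial r(x')$, so $x'\in S(\bar A,b',\bar\lambda)$.
\item Every $x'+w$ with $w\in\ker\bar A$ and $x'+w\in\partial r^*(\bar v)$ is also a solution.
\item Because $x'$ lies in the relative interior, any nonzero $w\in\ker\bar A\cap\para\partial r^*(\bar v)$ would therefore produce two solutions, contradicting single-valuedness.
\end{itemize}

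\emph{Step 4 (main obstacle): upgrading to \eqref{eq:CQPLQM} for some $\bar y$.} Step 3 gives the statement for $\para\partial r^*(\bar v)$, so it remains to find $\bar y\in T(\bar v)$ for which the inclusion of Step 1 is an equality. The example after \Cref{lem:rsubdiff} shows that $\lin M^{-1}(C)$ and $M^{-1}(\lin C)$ can differ, so this requires a careful choice. I would first try $\bar y\in\operatorname{ri}T(\bar v)$, which makes $I(\bar y)$ minimal over $T(\bar v)$. Then I would use the optimality condition $0\in B\bar y+N_\pol(\bar y)+\rge M$, which describes $T(\bar v)$, together with the formula \eqref{eq:kerr}, $\para\partial r^*(\bar v)=M^{-1}(\para(N_\pol(\bar y)\cap\rge M))$. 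The aim is to show that $N_\pol(\bar y)\cap\rge M$ meets the relative interior of $N_\pol(\bar y)$, so that its span is $\lin N_\pol(\bar y)\cap\rge M$. Applying $M^{-1}$ would then give
\[
\para\partial r^*(\bar v)=M^{-1}\bigl(\lin\{p_i\mid i\in I(\bar y)\}\bigr).
\]
If this relative-interior argument fails, the fallback is to interpret ``for some $\bar y$'' through \eqref{eq:kerr} directly, i.e.\ with $\para(N_\pol(\bar y)\cap\rge M)$ in place of $\lin\{p_i\mid i\in I(\bar y)\}$. This step is where I expect the real difficulty to lie.
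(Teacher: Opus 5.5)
Your Steps 1--3 are sound. The inclusion $\para\partial r^*(\bar v)\subseteq M^{-1}(\lin\{p_i\mid i\in I(\bar y)\})$ is all the forward direction needs. Your perturbation argument (shifting $\bar b$ so that a point of $\operatorname{ri}\partial r^*(\bar v)$ becomes a solution) correctly gives (a) $\Rightarrow$ $\ker\bar A\cap\para\partial r^*(\bar v)=\{0\}$ without any external characterization. The gap is Step 4, and it cannot be closed, because the implication from (a) to \eqref{eq:CQPLQM} is false as stated. Take $\ell=2$, $n=m=1$, $M=(1,1)^T$, $B=I_2$ and $\pol=\{z\mid z_1\le 0,\ z_2\ge 0\}$, i.e.\ $P=[e_1,\,-e_2]$, $\beta=0$. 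Then $\tpb(u)=\frac12\min\{u_1,0\}^2+\frac12\max\{u_2,0\}^2$, hence $r(x)=\frac12x^2$, and \eqref{eq:compactnessCQ} holds since $\ker B=\{0\}$. At $\bar A=0$, $\bar\lambda=1$ and any $\bar b$ you get $\bar x=0$, $\bar v=0$, $T(\bar v)=\{0\}$ and $I(0)=\{1,2\}$. So $M^{-1}(\lin\{p_1,p_2\})=\R=\ker\bar A$, and \eqref{eq:CQPLQM} fails for the only admissible $\bar y$. Yet $S(A,b,\lambda)=Ab/(A^2+\lambda)$ is single-valued and smooth, so (a)--(d) all hold; consistently, $\partial r^*(0)=\{0\}$. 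This is exactly where your relative-interior idea breaks: $\operatorname{ri}T(\bar v)=\{0\}$, but $N_{\pol}(0)\cap\rge M=\{0\}$ misses $\operatorname{ri}N_{\pol}(0)$. In optimization terms, the quadratic program defining $T(\bar v)$ has no strictly complementary solution, which can happen when $B\neq 0$. For $B=0$ your choice $\bar y\in\operatorname{ri}T(\bar v)$ does work, since then $N_{\pol}(\bar y)+\rge M=N_{T(\bar v)}(\bar y)$ is a subspace.

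The paper's own proof bridges this step by asserting $\para\partial r^*(\bar v)=M^{-1}(\lin N_{\pol}(\bar y))$ for every $\bar y\in T(\bar v)$. That is precisely what the example refutes, and what the example after \Cref{lem:rsubdiff} already warns about. So your suspicion was justified, and no choice of $\bar y$ rescues the statement. What your argument does prove is that \eqref{eq:CQPLQM} for some $\bar y$ implies (a)--(d). It also proves that (a)--(d) are each equivalent to $\ker\bar A\cap M^{-1}\bigl(\para((B\bar y+N_{\pol}(\bar y))\cap\rge M)\bigr)=\{0\}$ for any $\bar y\in T(\bar v)$. Keep the shift $B\bar y$ from your Step~1 here: \eqref{eq:kerr} as printed drops it and is wrong in general. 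For instance, with $B=\mathrm{diag}(0,1)$, $\pol=\{y\mid y_1\le 0,\ y_2\ge 1\}$, $M=(1,1)^T$ and $\bar v=1$, one has $\partial r^*(\bar v)=[0,1]$, whereas $N_{\pol}(\bar y)\cap\rge M=\{0\}$. The equivalence as stated is recovered in two cases:
\begin{enumerate}
\item $M$ is surjective, as in \Cref{cor:Quant};
\item some $\bar y\in T(\bar v)$ and $x\in\partial r^*(\bar v)$ satisfy $Mx-B\bar y\in\operatorname{ri}N_{\pol}(\bar y)$.
\end{enumerate}
In the second case $(B\bar y+N_{\pol}(\bar y))\cap\rge M$ meets the relative interior of $B\bar y+N_{\pol}(\bar y)$. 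Its parallel subspace is then $\lin N_{\pol}(\bar y)\cap\rge M$, and your Step~1 inclusion becomes an equality.
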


\begin{proof}

Let $\bar{x} \in S(\bar{A}, \bar{b}, \bar{\lambda})$.
As was discussed in \Cref{sec:Kernel}, the condition to apply \Cref{prop:Implicit} to the regularized least squares setting is
\begin{align*}
    \ker \bar{A} \cap \ker \partial^2 r(\bar x | \bar v) = \{0\}.
\end{align*} By the previous result, this is equivalent to
\begin{align}\label{eq:PLQMpara}
    \ker \bar{A} \cap \para \partial r^*(\bar v) = \{0\}.
\end{align} As discussed in the proof of \Cref{eq:kerHessPLQ}, the object $\para \partial r^* (\bar v)$ can be written as $M^{-1}(\lin \np(\bar y)$ for any $\bar y \in T(\bar v)$. Also, by \cite[Theorem 6.46]{RoW 98}, $\np(\bar y) = \cone \{p_i \ | \ i \in I(\bar y)\}$, so putting these facts together, we obtain that the condition to apply \Cref{prop:Implicit} is \begin{align*}
    \ker \bar{A} \cap M^{-1}(\operatorname{span}\{p_i \ | \ i \in I(\bar{y})\}) = \{0\}.
\end{align*} This tells us that when this holds, $S$ is (a) single-valued and (b) locally Lipschitz at $(\bar A, \bar b, \bar \lambda)$, and because $r$ is piecewise linear-quadratic, by \cite[Remark 1]{FGH 22}, $\partial r$ is proto-differentiable at $(\x, \bar v)$, so $S$ is also (c) directionally differentiable at $(\bar A, \bar b, \bar \lambda)$. By \cite[Proposition 3]{FGH 22}, $\partial r$ is semismooth* at $(\bar x, \bar v)$, so $S$ is semismooth* at $(\bar A, \bar b, \bar \lambda)$. Combining this with (c), we get that $S$ is (d) semismooth at $(\bar A, \bar b, \bar \lambda)$. On the other hand, (b)-(d) all imply (a), which by \cite[Theorem 3]{CHNS 26} is equivalent to \eqref{eq:PLQMpara} which as established, is equivalent to \eqref{eq:CQPLQM}
\end{proof}


In the case that $M = I$, we have a closed-form 
expression for the directional derivative and Lipschitz constant of $S$:
\begin{corollary}\label{cor:Quant}
    Let $S : \R^m \times \R_{++} \rightrightarrows \R^n$ be given by \begin{align*}
        S(b, \lambda) = \underset{x \in \R^n}{\argmin} \left\{ \frac{1}{2} \lVert Ax - b \rVert^2 + \lambda \tpb(x) \right\}.
    \end{align*}
    Let $\bar x \in S(\bar b, \bar \lambda)$ and set $\bar v = - \frac{1}{\bar \lambda}A^T(A\x - \bar b)$. Note that under this setup, the assumption in \eqref{CCQ} is automatically satisfied. In this case, the Mordukhovich criterion simplifies to \begin{align}\label{eq:CQsimple2}
        \ker A \cap \operatorname{span}\{p_i \ | \ i \in I(\bar{v})\}
    \end{align}
    where $I(\bar{v}) = \{i \ | \ p_i^T \bar y  = \beta_i\}$. 
    Then, if \eqref{eq:CQsimple2}  holds, we have the following:
    \begin{itemize}
        \item[(i)] $S$ is directionally differentiable at $(\bar{b}, \bar{\lambda})$, and for every direction $(q, \alpha)$, there exists a matrix $U$ with orthonormal columns such that the directional derivative in direction $(q, \alpha)$ is given by
        \begin{align*}
            (H^{-1}BA^T + U[(AHU)^T (AU)]^{-1} (AHU)^{T} (I - AH^{-1}BA^T)) \left(q + \frac{\alpha}{\bar{\lambda}}(A \bar{x} - \bar{b})\right)
        \end{align*} for $H = I + \frac{1}{\bar \lambda} BA^TA$.
        \item[(ii)] $S$ is single-valued and locally Lipschitz around $(\bar{b}, \bar{\lambda})$, and the modulus of Lipschitz continuity can be bounded by
        {\small  \begin{align*}
            L \le \max_{U' \in \mathcal{V}} 
    \left\lVert (H^{-1}BA^T + U'[(AH U')^T (AU')]^{-1} (AH U')^{T} (I - AH^{-1}BA^T)) \left(\bar q - \frac{\bar \alpha}{\bar \lambda}(A \bar x - \bar b)\right) \right\rVert
        \end{align*}}
        for  $\mathcal{V}:=\{V\in \R^{n\times r} \ | \ V^TV=I, r\le |I(\bar v)|\}$.
    \end{itemize}
\end{corollary}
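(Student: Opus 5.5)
The plan is to follow the template of \Cref{cor:polyhedraldd}, now with $M=I$ and $r=\tpb$. The compactness condition \eqref{eq:compactnessCQ} holds trivially because $\ker M^T=\{0\}$. Moreover, $T(\bar v)=\{\bar v\}$, so \Cref{eq:kerHessPLQ} gives $\ker\partial^2 r(\bar x|\bar v)=\para\partial\tpb^*(\bar v)=\lin N_\pol(\bar v)=\lin\{p_i\mid i\in I(\bar v)\}$. Hence \eqref{eq:CQsimple2} is exactly the Mordukhovich criterion \eqref{eq:MordData}. \Cref{thm:plqstab} (or \Cref{prop:Implicit} directly) then yields single-valuedness, local Lipschitzness and directional differentiability. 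By \Cref{prop:Implicit}(c), the directional derivative $w=S'(\bar b,\bar\lambda;q,\alpha)$ is the unique solution of
\[
\tfrac{1}{\bar\lambda}A^T(q+\alpha\bar u-Aw)\in D\partial\tpb(\bar x|\bar v)(w),\qquad \bar u:=\tfrac{1}{\bar\lambda}(A\bar x-\bar b).
\]

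To compute $w$, I will invert the relation. Since $\partial\tpb=(\partial\tpb^*)^{-1}=(B+N_\pol)^{-1}$, the inclusion $z\in D\partial\tpb(\bar x|\bar v)(w)$ is equivalent to $w\in Bz+DN_\pol(\bar v|\bar x-B\bar v)(z)=Bz+N_{\mathcal K}(z)$. Here $\mathcal K=T_\pol(\bar v)\cap\{\bar x-B\bar v\}^\perp$ is the critical cone, by \cite[Example 13.44]{RoW 98}. Set $z=\frac{1}{\bar\lambda}A^T(q+\alpha\bar u-Aw)$. As in the proof of \Cref{cor:polyhedraldd}, I will show that $N_{\mathcal K}(z)\subseteq E:=\lin\{p_i\mid i\in K\}$ with $K=\{i\in I(\bar v)\mid p_i^Tz=0\}$, and that $z\in E^\perp$; the $\gamma_i$-vanishing argument is repeated verbatim. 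This gives $w=Bz+e$ with $e\in E$ and $z\perp E$.

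Next I solve the linear system. Substituting $z$ gives $w+\frac1{\bar\lambda}BA^TAw=\frac1{\bar\lambda}BA^Tg+e$, where $g:=q+\alpha\bar u$. Since $H=I+\frac1{\bar\lambda}BA^TA$ is invertible by \Cref{lem:Hinvert}, this becomes
\[
w=H^{-1}\tfrac1{\bar\lambda}BA^Tg+H^{-1}e .
\]
Put $U$ equal to an orthonormal basis of $H^{-1}(E)$ and write $H^{-1}e=Uc$. The orthogonality $z\perp E$ then reads $(HU)^TA^T(g-Aw)=0$. Inserting the expression for $w$ gives
\[
(AHU)^T(AU)\,c=(AHU)^T\bigl(I-AH^{-1}BA^T\bigr)g,
\]
possibly after absorbing the factor $1/\bar\lambda$ into $B$ as the statement implicitly does. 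The matrix $(AHU)^T(AU)$ is invertible by \Cref{lem:AHUinvert}(d), applied with $C=\frac1{\bar\lambda}B$ and $E\subseteq\lin\{p_i\mid i\in I(\bar v)\}$, which meets $\ker A$ trivially. Solving for $c$ produces the formula in (i).

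For (ii), I will argue exactly as in the Lipschitz part of \Cref{cor:polyhedraldd}. First, \Cref{prop:Implicit}(c) bounds $L$ by the limsup of $\max_{\|(q,\alpha)\|\le1}\|S'\|$ at nearby points. Then I take a maximizing sequence, pass to a subsequence on which the index sets $K_k\subseteq I(\bar v)$ are constant, and use $I(v_k)\subseteq I(\bar v)$ for $v_k$ near $\bar v$. The resulting $U$ has orthonormal columns and at most $|I(\bar v)|$ of them, so it lies in $\mathcal V$; passing to the limit, taking $(\bar q,\bar\alpha)\in\mathbb B$ as the limiting direction, gives the stated bound. I expect the main obstacle to be the second step, namely verifying that $N_{\mathcal K}(z)$ lies in $E$ while $z\perp E$ when the critical cone involves $\bar x-B\bar v$ rather than $M\bar x$. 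The bookkeeping of $\bar\lambda$ in $H$ versus $B$ also needs care.
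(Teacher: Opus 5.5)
Your proposal is correct and follows essentially the same route as the paper's proof. You invert via $\partial\tpb^*=B+N_\pol$ and the critical-cone formula, and use the $\gamma_i$-vanishing argument to get $w-Bz\in E$ and $z\perp E$. You then solve with $H$ and an orthonormal basis $U$ of $H^{-1}(E)$ using \Cref{lem:AHUinvert}, and obtain (ii) by the limiting argument from \Cref{cor:polyhedraldd}. Writing $H^{-1}e=Uc$ is only a cosmetic variant of the paper's step of adding $(AHU)^TAH^{-1}h$ to both sides.

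Your caution about $\bar\lambda$ is justified. The computation actually produces $\frac{1}{\bar\lambda}H^{-1}BA^T$ and $I-\frac{1}{\bar\lambda}AH^{-1}BA^T$, and the paper's proof silently drops this factor when it substitutes $h=-\frac{1}{\bar\lambda}BA^T(q-\alpha\bar u)$ back in.
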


\begin{proof}
    By ~\Cref{prop:Implicit}, at a point $(\bar{b}, \bar{\lambda}, \bar{x}) \in \gph S$ such that \eqref{eq:CQsimple2} holds, we have that $S$ is directionally 
differentiable, ~\Cref{prop:Implicit} tells us that the directional derivative in direction $(q, \alpha)$ given by 
$S'(\bar{b}, \bar{\lambda}; q, \alpha) = w$ for the unique $w$ such that
\begin{align}\label{eq:0DG}
    0 \in D G(\bar{b}, \bar{\lambda}, \bar{x} | 0)(q, \alpha, w),
\end{align}
where $G(b, \lambda, x) = \frac{1}{\lambda} A^T(Ax - b) + \partial \tpb (x)$.
By ~\Cref{lem:Sum}(a) this is equivalent to \begin{align*}
    \frac{1}{\bar{\lambda}} A^T(q + \frac{\alpha}{\bar{\lambda}}(A \bar{x} - \bar{b}) - Aw) \in D \partial \tpb (\bar{x} | \bar{v})(w).
\end{align*} Set $z := \frac{1}{\bar{\lambda}} A^T(q + \frac{\alpha}{\bar{\lambda}}(A \bar{x} - \bar{b}) - Aw)$. Applying the inversion formula for graphical derivatives \cite[8(19)]{RoW 98}, this can be written as \begin{align*}
    w \in D(\partial \tpb^*)(\bar{v} | \bar{x})(z) = D(N_{\pol} + B)(\bar{v} | \bar{x})(z) = D N_{\pol} (\bar{v} | \bar{x} - B \bar{v})(z) + Bz \\
    \iff w - Bz \in D N_{\pol} (\bar{v} | \bar{x} - B \bar{v})(z).
\end{align*} Let $\mathcal{K} := T_{\pol}(\bar{v}) \cap \{\bar{x} - B\bar{v}\}^{\bot}$ being the {\em critical cone} of $\pol$ at $\bar v$ for $\bar{x} - B\bar{v}$.
By \cite[Example 13.44]{RoW 98}, we can write the graphical derivative of the normal cone as follows:
\begin{align*}
    w - Bz \in N_{\mathcal{K}}(z) = N_{T_{\pol}(\bar{v})}(z) + N_{\{\bar{x} - B \bar{v}\}^{\bot}}(z).
\end{align*} So $z \in T_{\pol}(\bar{v})$ and $z^T(\bar{x} - B \bar{v}) = 0$. These can be written as \begin{align*}
    &N_{T_{\pol}(v)}(z) = \operatorname{cone} \{p_i \ | \ i \in I(\bar{v}), p_i^Tz = 0\},\\
    &N_{\{\bar{x} - B \bar{v}\}^{\bot}}(z) = \operatorname{span}\{\bar{x} - B \bar{v}\}.
\end{align*} 
It follows that 
\[
w - Bz \in \operatorname{cone} \{p_i \ | \ i \in I(\bar{v}), \ p_i^Tz = 0\} + \operatorname{span}\{\bar{x} - B \bar{v}\}.
\]
Using a similar argument to the proof of Corollary~\ref{cor:polyhedraldd}, let $J := \{i \in I(\bar{v}) \ | \ p_i^Tz = 0\}$. The fact that $\bar{v} \in \partial \tpb (\bar{x}) = (N_{\pol} + B)^{-1}(\bar{x})$ gives us 
\[
\bar{x} - B \bar{v} \in N_{\pol}(\bar{v}) = \operatorname{cone}\{p_i \ | \ i \in I(\bar{v})\}.
\]
We claim that in fact $\bar{x} - B \bar{v} \in \operatorname{cone} \{p_i \ | \ i \in J\}$.
Indeed, as $\bar{x} - B \bar{v}$ can be written as $\sum_{i \in I(\bar{v})} \gamma_i p_i$ for $\gamma_i \ge 0$,  we have 
\begin{align*}
    0 = z^T(\bar x-B\bar{v})=\sum\limits_{i \in I(\bar{v})} \gamma_i p_i^Tz = \sum\limits_{i \in J} \gamma_i \underbrace{p_i^Tz}_{ = 0} + \sum\limits_{i \in I(\bar{v}) \setminus J} \gamma_i \underbrace{p_i^Tz}_{< 0}.
\end{align*} So $\gamma_i = 0$ for $i \not \in J$. This clarifies the claim. It follows that 
\begin{align*}
    w - Bz \in \operatorname{cone}\{p_i \ | \ i \in J\} + \operatorname{span}\{\bar{x} - B \bar{v}\} \subseteq \operatorname{span}\{p_i \ | \ i \in J\} + \operatorname{span}\{p_i \ | \ i \in J\} 
    \\ = \operatorname{span}\{p_i \ | \ i \in J\}.
\end{align*}

Let $E := \operatorname{span}\{p_i\ |\ i \in J\}$.
We have that $w - Bz \in E$ and $z \in E^{\bot}$. Let $\bar{u} := - \frac{1}{\bar{\lambda}}(A \bar{x} - \bar{b})$, $H := I + \frac{1}{\bar{\lambda}} BA^{T} A$, and $h := -\frac{1}{\bar{\lambda}} BA^{T}(q - \alpha \bar{u})$.
Recall that $z = \frac{1}{\bar{\lambda}} A^T(q + \frac{\alpha}{\bar{\lambda}}(A \bar{x} - \bar{b}) - Aw)$. Thus we can rewrite $w - Bz$ as follows: \begin{align*}
    &w - Bz = w - \frac{1}{\bar{\lambda}}BA^{T}(q - \alpha \bar{u} - Aw) = 
    \\ &w + \frac{1}{\bar{\lambda}} BA^{T} Aw - \frac{1}{\bar{\lambda}} BA^{T} (q - \alpha \bar{u}) = Hw + h
\end{align*}
Note that $H$ is invertible by ~\Cref{lem:Hinvert}.  As $Hw + h \in E$, we have  $w + H^{-1}h \in H^{-1}(E)$. Let $\begin{bmatrix}
    u_1 \hdots u_r
\end{bmatrix} = U$ be an orthonormal basis of $H^{-1}(E)$. Since $J\subset I(\bar v)$, the qualification condition \eqref{eq:CQsimple2}
implies that $\ker A \cap E = \{0\}$ with $r=|J|$, the cardinality of $J$. Moreover,  the fact that $z \in E^{\bot}$ gives us $H^Tz \in H^T(E^{\bot}) \overset{\eqref{lem:Sub}}{=} (H^{-1}(E))^{\bot}$, so $U^TH^Tz = 0$.
 We can rewrite this as $
    (AHU)^T(q - \alpha \bar{u} - Aw) = 0$,
 which gives 
 \begin{align}
     (AHU)^T Aw = (AHU)^T (q - \alpha \bar{u}).
 \end{align} Adding $(AHU)^T AH^{-1}h$ to both sides and using the fact $UU^T$ is the orthogonal projection onto $H^{-1}(E)$, we obtain
 \begin{align*}
    (AHU)^{T} (q - \alpha \bar{u} + AH^{-1}h) = (AHU)^{T} A UU^{T}\underbrace{(w + H^{-1}h)}_{\in H^{-1}(E)}.
 \end{align*}
 By Lemma~\ref{lem:AHUinvert}, $(AHU)^T AU$ is invertible. We can then write
 \begin{align*}
    & U^T(w + H^{-1}h) = [(AHU)^T (AU)]^{-1} (AHU)^{T} (q - \alpha \bar{u} + AH^{-1}h)
 \end{align*} and isolate $w$ as follows:
 \begin{align*}
    w = -H^{-1}h + U[(AHU)^T (AU)]^{-1} (AHU)^{T} (q - \alpha \bar{u} + AH^{-1}h).
 \end{align*}
 Plugging back in the definition of $h$, this yields
 \begin{align*}
    w = (H^{-1}BA^T + U[(AHU)^T (AU)]^{-1} (AHU)^{T} (I - AH^{-1}BA^T)) (q - \alpha \bar{u}),
 \end{align*} 
 which verifies the formula of directional derivative of $S$ at $(\bar b,\bar \lambda)$ at $(q,\alpha)$ in (i). 

To verify (ii), we first notice that \eqref{eq:CQsimple2} holds locally around $(\bar{b}, \bar{\lambda}, \bar{x})$. Indeed, 
 let $\{(b_k, \lambda_k, x_k)\} \subseteq \gph S$ be a sequence converging to $(\bar{b}, \bar{\lambda}, \bar{x})$.
 Set $v_k := - \frac{1}{\lambda_k} A^T(Ax_k - b_k)$. For $k$ sufficiently large, we have $I(v_k) \subseteq I(\bar{v})$, so
 for such $k$ we have \begin{align*}
    \ker A \cap \operatorname{span} \{p_i \ | \ i \in I(v_k)\} \subseteq \ker A \cap \operatorname{span} \{p_i \ | \ i \in I(\bar{v})\} = \{0\}.
 \end{align*}
 This establishes that for $(b, \lambda)$ sufficiently close to $(\bar{b}, \bar{\lambda})$, $S$ is 
 directionally differentiable. Because we eventually have the containment $I(v_k) \subseteq I(\bar{v})$,
 the formula for the directional derivative holds for $(b, \lambda)$ close enough to $(\bar{b}, \bar{\lambda})$ (if we replace $\bar{\lambda}$ for $\lambda$ and $b$ for $\bar{b}$).

 When \eqref{eq:CQsimple2} holds at $(\bar{b}, \bar{\lambda}, \bar{x}) \in \gph S$, we have that $S$ is locally Lipschitz 
 at $(\bar{b}, \bar{\lambda})$ with modulus \begin{align*}
    L \le \limsup_{(b, \lambda) \to (\bar{b}, \bar{\lambda})} \max_{\lVert (q, \alpha) \rVert \le 1} \lVert S'(b, \lambda ; q, \alpha) \rVert.
 \end{align*}
 Let $\{(b_k, \lambda_k)\}$ be a sequence converging to $(\bar b, \bar \lambda)$ such that 
 \begin{align*}
     L \le \lim_{k \to \infty} \max_{(q, \alpha) \in \B} \lVert S'(b_k, \lambda_k ; q, \alpha) \rVert. 
 \end{align*} We may also choose a corresponding sequence $\{(q_k, \alpha_k)\} \subseteq \B$ such that \begin{align*}
     L \le \lim_{k \to \infty} \lVert S'(b_k, \lambda_k ; q_k, \alpha_k) \rVert.
 \end{align*} 
 Let $\{x_k \in S(b_k, \lambda_k)\}$ such that $x_k \to \x$. For $k$ sufficiently large, we have that $(b_k, \lambda_k)$ are in a neighborhood of $(\bar b, \bar \lambda)$ such that \eqref{eq:CQsimple2} holds, so by the same arguments as in the proof for (i),  we can define $H_k := I + \frac{1}{\lambda_k}BA^TA$, and we have that for the direction $(q_k, \alpha_k)$ there exists a subspace $E_k$ and matrix with orthonormal columns $U_k\in \R^{n\times r_k}$ with $\rge U_k = H_k^{-1}(E_k)$ and $r_k\le|I(v_k)|\le |I(\bar v)|$ such that the directional derivative of $S$ at $(b_k, \lambda_k)$ in direction $(q_k, \alpha_k)$ is given by
 \begin{align*}
     &S'(b_k, \lambda_k ; q_k, \alpha_k) = \\
     &(H_k^{-1}BA^T + U_k[(AH_kU_k)^T AU_k]^{-1}(AH_kU_k)^T (I - AH_k^{-1}BA^T))(q_k - \frac{\alpha_k}{\lambda_k}(Ax_k - b_k)).
 \end{align*} 
 The sequence $\{(q_k, \alpha_k)\} \subseteq \B$ is bounded, so we can assume without loss of generality that it converges to some $(\bar q, \bar \alpha)$ and $r_k=r$.
 Also,  every $U_k\in \R^{n\times r}$ satisfies $U_k^TU_k = I$, so the sequence $\{U_k\}$ is bounded, and therefore we can also assume it converges to some $\bar U\in \R^{n\times r}$ such that $\bar U^T \bar U = I$. Then, passing to the limit and upper bounding by taking a maximum over the space of $n \times r$ matrices satisfying $V^TV = I$, we get
 {\small\begin{align*}
     &L \le \left\lVert (H^{-1}BA^T + \bar{U}[(AH \bar{U})^T (A \bar{U})]^{-1} (AH \bar{U})^{T} (I - AH^{-1}BA^T)) \left(\bar q - \frac{\bar \alpha}{\bar \lambda}(A \bar x - \bar b)\right) \right\rVert \\ & \le \max_{U' \in \mathcal{V}_{n,r}} 
    \left\lVert (H^{-1}BA^T + U'[(AH U')^T (AU')]^{-1} (AH U')^{T} (I - AH^{-1}BA^T)) \left(\bar q - \frac{\bar \alpha}{\bar \lambda}(A \bar x - \bar b)\right) \right\rVert
 \end{align*}}
 where $\mathcal{V}_{n,r} := \{V \in \R^{n \times r}\ | \ V^TV = I\}$, which is compact. Finally, we take a maximum over $\mathcal{V} := \{V \in \R^{n \times r} \ | \ V^TV = I, \ r \le |I(\bar v)|\}$ to obtain
 {\small
 \begin{align*}
  L \le
     \max_{U' \in \mathcal{V}} \left\lVert (H^{-1}BA^T + U'[(AH U')^T (AU')]^{-1} (AH U')^{T} (I - AH^{-1}BA^T)) \left(\bar q - \frac{\bar \alpha}{\bar \lambda}(A \bar x - \bar b)\right) \right\rVert.
 \end{align*}
}


\end{proof}

\begin{remark} Again, as with \Cref{cor:polyhedraldd}, we could have stated this result for the least squares problem parameterized by $(A, b, \lambda)$ instead of $(b, \lambda)$, but we have stated a simpler result due to space and legibility constraints. We could also have stated \Cref{thm:plqstab} for the setting where a (locally) twice continuously differentiable convex function $\varphi$ is used for data fidelity, in which case \eqref{eq:CQPLQM} reads
\begin{align*}
    \ker \bar{A}^T\nabla^2 \varphi(\bar{A} \x - \bar b)\bar{A} \cap M^{-1}(\lin \{p_i \ | \ i \in I(\bar y)\}) = \{0\}.
\end{align*}
\end{remark}


\section{Final Remarks}\label{sec:Final} 
In this paper, we studied stability properties of solution mappings associated with convex regularized least-squares problems through a variational-analytic framework.  Our approach combines an implicit-function perspective for monotone generalized equations with second-order tools from generalized differentiation, leading to conditions for local Lipschitz continuity, directional differentiability, and semismoothness of the associated solution maps.

A central aspect of the analysis was the relationship between generalized Hessians and the geometry of the subdifferential of the conjugate regularizer. In particular, for conjugate \(\mathcal{C}^2\)-cone reducible regularizers, we showed that the kernel of the generalized Hessian admits a tractable characterization in terms of the parallel subspace of the subdifferential. This allows one to replace difficult second-order calculations by more explicit first-order geometric conditions.

Our framework applies to a broad range of regularizers arising in optimization, statistics, and machine learning, including weighted polyhedral support functions and piecewise linear-quadratic penalties. In addition to qualitative stability results, we also derived quantitative sensitivity estimates and explicit formulas for directional derivatives in structured settings.

Several directions remain open for future research: 
\begin{itemize}
\item[i)] A natural question is to extend the family of regularizers that satisfy the generalized Hessian  expression \eqref{eq:pargenhessc2} beyond the ones given here so that the implicit function theorem from \Cref{sec:Implicit}
 can be readily applied.
 \item[ii)] The semismoothness results for the optimal solution maps in \Cref{sec:Stability} beg the question if explicit formulae for the Clarke Jacobians of said solution maps can be computed. This would be appealing for semismooth Newton methods. 

 \item[iii)] In \Cref{sec:PLQ}, we discuss that it is presently unclear whether the composition of a PLQ penalty with a linear map is  conjugate  $\mathcal{C}^2$-cone reducible. A positive answer would make the compactness \eqref{eq:compactnessCQ} redundant. 
 We would like to resolve this question in the future.  
 
\end{itemize}

\section*{Acknowledgments} The first author would like to thank Ebrahim Sarabi (Miami Unversity) for fruitful discussions about material related to this work.

\end{document}